\documentclass{amsart}
\usepackage{amsfonts,amssymb}
\usepackage{multicol}

\newtheorem{theorem}{Theorem}
\newtheorem{lemma}{Lemma}
\newtheorem{corollary}{Corollary}

\newcommand{\integers}{{\mathbb Z}}

\def\ov{\overline}

\def\Nu{{\rm N}}
\def\Mu{{\rm M}}
\def\Tau{{\rm T}}
\def\Eta{{\rm H}}
\def\Kappa{{\rm K}}
\def\Iota{{\rm I}}
\def\circle{{\rm O}}

\begin{document}

\title{Fibered Orbifolds and Crystallographic Groups}

\author{John G. Ratcliffe and Steven T. Tschantz}

\address{Department of Mathematics, Vanderbilt University, Nashville, TN 37240
\vspace{.1in}}

\email{j.g.ratcliffe@vanderbilt.edu}


\date{}


\begin{abstract}
In this paper, we prove that 
a normal subgroup ${\rm N}$ of an $n$-dimen\-sional crystallographic 
group $\Gamma$ determines a geometric  
fibered orbifold structure on the flat orbifold $E^n/\Gamma$,  
and conversely every geometric fibered orbifold structure on $E^n/\Gamma$ 
is determined by a normal subgroup ${\rm N}$ of $\Gamma$. 
In particular, we prove that $E^n/\Gamma$ is a fiber bundle,  
with totally geodesic fibers, over a $\beta_1$-dimensional torus,  
where $\beta_1$ is the first Betti number of $\Gamma$. 

Let $\Nu$ be a normal subgroup of $\Gamma$ which is maximal 
in its commensurability class. 
We study the relationship between the exact sequence 
$1\to \Nu \to \Gamma\to \Gamma/\Nu\to 1$ splitting and the corresponding 
fibration projection having an affine section. 
If $\Nu$ is torsion-free, we prove that the exact sequence splits 
if and only if the fibration projection has an affine section. 
If the generic fiber $F = {\rm Span}(\Nu)/\Nu$ has an ordinary point 
that is fixed by every isometry of $F$, 
we prove that the exact sequence always splits. 
Finally, we describe all the geometric fibrations of the orbit spaces 
of all 2- and 3-dimensional crystallographic groups.  
\end{abstract}

\maketitle

\section{Introduction} 
Let $E^n$ be Euclidean $n$-space. 
A map $\phi:E^n\to E^n$ is an isometry of $E^n$ 
if and only if there is an $a\in E^n$ and an $A\in {\rm O}(n)$ such that 
$\phi(x) = a + Ax$ for each $x$ in $E^n$. 
We shall write $\phi = a+ A$. 
In particular, every translation $\tau = a + I$ is an isometry of $E^n$. 

A {\it flat $n$-orbifold} is a $(E^n,{\rm Isom}(E^n))$-orbifold 
as defined in \S 13.2 of Ratcliffe \cite{R}. 
A connected flat $n$-orbifold has a natural inner metric space structure. 
If $\Gamma$ is a discrete group of isometries of $E^n$, 
then its orbit space $E^n/\Gamma = \{\Gamma x: x\in E^n\}$ 
is a  connected, complete, flat $n$-orbifold, 
and conversely if $M$ is a connected, complete, flat $n$-orbifold, 
then there is a discrete group $\Gamma$ of isometries of $E^n$  
such that $M$ is isometric to $E^n/\Gamma$ by Theorem 13.3.10 of \cite{R}. 

\vspace{.15in}
\noindent{\bf Definition:}
A flat $n$-orbifold $M$ {\it geometrically fibers} over a flat $m$-orbifold $B$, 
with {\it generic fiber} a flat $(n-m)$-orbifold $F$, if there is a surjective map $\eta: M \to B$, 
called the {\it fibration projection},  
such that for each point $y$ of $B$,  
there is an open metric ball $B(y,r)$ of radius $r > 0$ centered at $y$ in $B$
such that $\eta$ is isometrically equivalent on $\eta^{-1}(B(y,r))$
to the natural projection $(F\times B_y)/G_y \to B_y/G_y$, 
where $G_y$ is a finite group acting diagonally on $F\times B_y$, isometrically on $F$,  
and effectively and orthogonally on an open metric ball $B_y$ in $E^m$ of radius $r$. 
This implies that the fiber $\eta^{-1}(y)$ is isometric to $F/G_y$. 
The fiber $\eta^{-1}(y)$ is said to be {\it generic} if $G_y = \{1\}$  
or {\it singular} if $G_y$ is nontrivial. 

\vspace{.15in}

An $n$-dimensional {\it crystallographic group} ({\it space group}) is a discrete group 
of isometries $\Gamma$ of $E^n$ such that $E^n/\Gamma$ is compact. 
We prove that if ${\rm N}$ is a normal subgroup of an $n$-dimensional space  
group $\Gamma$, then the flat orbifold $E^n/\Gamma$ geometrically fibers over 
a flat orbifold, with generic fiber a connected flat orbifold, 
naturally induced by ${\rm N}$. 
Conversely, we prove that if $E^n/\Gamma$ geometrically fibers over 
a flat orbifold $B$ with generic fiber a connected flat orbifold $F$, 
then this fibration is equivalent to a geometric fibration 
induced by a normal subgroup ${\rm N}$ of $\Gamma$. 

An {\it $m$-dimensional torus} or $m$-{\it torus}    
is a topological space homeomorphic to the cartesian product of $S^1$ 
with itself $m$-times. 
Here a 0-{\it torus} is defined to be a point. 
We prove that if $\Gamma$ is an $n$-dimensional space group $\Gamma$ 
with first Betti number $\beta_1$, then the flat orbifold $E^n/\Gamma$ 
is a fiber bundle over a $\beta_1$-torus with totally geodesic fibers. 

We illustrate the theory by describing all the geometric fibrations 
of the orbit spaces of all $2$- and $3$-dimensional space groups 
building on the work of Conway et al. \cite{C-T}.

\section{Normal Subgroups of Crystallographic Groups} 

The fundamental theorem of crystallographic groups is the following theorem. 

\begin{theorem} 
{\rm (Bieberbach's Theorems)}
\begin{enumerate}
\item If $\Gamma$ is an $n$-dimensional space group, 
then the subgroup ${\rm T}$ of all translations in $\Gamma$ is a free abelian 
normal subgroup of rank $n$ and of finite index in $\Gamma$ 
such that $\{a: a+I\in {\rm T}\}$ spans $E^n$. 
\item Two $n$-dimensional space groups $\Gamma_1$ and $\Gamma_2$ 
are isomorphic if and only if they are conjugate by an affine homeomorphism of $E^n$. 
\item There are only finitely many isomorphism classes of $n$-dimensional 
space groups for each $n$. 
\end{enumerate}
\end{theorem}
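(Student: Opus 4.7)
The plan is to establish the three parts in turn, using (1) as the foundation for (2) and then combining (1) and (2) with a counting argument for (3).

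For part (1), I would first observe that ${\rm T}$ is normal in $\Gamma$, since conjugating a translation $\tau_b = b+I$ by an isometry $\phi = a+A$ produces $(Ab)+I$, again a translation. Consequently the map $a+A\mapsto A$ descends to a homomorphism from $\Gamma/{\rm T}$ into ${\rm O}(n)$, with image the point group $P$. The technical heart of the whole theorem is showing that $P$ is finite. I would use the classical Bieberbach commutator trick: for $\phi = a+A\in\Gamma$ and $\tau_b\in{\rm T}$, the commutator $[\phi,\tau_b]$ is the translation by $(A-I)b$; if $P$ were infinite, then compactness of ${\rm O}(n)$ would produce elements of $\Gamma$ whose linear parts approach the identity, and iterating the commutator operation would yield nonzero translations whose translation vectors tend to $0$, contradicting discreteness of $\Gamma$. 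Hence $P$ is a discrete, and so finite, subgroup of ${\rm O}(n)$, so ${\rm T}$ has finite index. Since ${\rm T}$ consists of translations, it is torsion-free abelian, and being finitely generated of rank at most $n$, it is free abelian. Finally, compactness of $E^n/\Gamma$, combined with the finite index of ${\rm T}$, forces $\{b:b+I\in{\rm T}\}$ to span $E^n$, so the rank is exactly $n$.

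For part (2), the strategy is to characterize ${\rm T}$ algebraically inside $\Gamma$, so that an abstract isomorphism $f:\Gamma_1\to\Gamma_2$ automatically carries ${\rm T}_1$ onto ${\rm T}_2$. I would argue that ${\rm T}$ is the unique maximal abelian normal subgroup of $\Gamma$ of finite index: if $N$ is any such subgroup, then $N\cap{\rm T}$ has finite index in ${\rm T}$ and hence spans $E^n$, and commutativity of $N$ forces the linear part of every element of $N$ to fix every vector in that spanning lattice, so $N\subseteq{\rm T}$. The restriction $f|_{{\rm T}_1}$ is then an isomorphism of rank-$n$ free abelian groups, which extends uniquely to an $\realnos$-linear automorphism $A$ of $E^n$. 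A direct computation using compatibility of $f$ with the conjugation action of the point group on ${\rm T}$ shows that, after correcting $A$ by a translation obtained from an averaging argument over the finite quotient $\Gamma_1/{\rm T}_1$, the resulting affine map conjugates $\Gamma_1$ onto $\Gamma_2$ and realizes $f$.

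For part (3), I would combine (1) and (2) with two classical finiteness inputs. First, by Minkowski's lemma (injectivity of $GL_n(\integers)\to GL_n(\integers/3\integers)$ on torsion), there are only finitely many conjugacy classes of finite subgroups of $GL_n(\integers)$, bounding the possible pairs $(P,{\rm T})$ consisting of a point group together with its integral action on the translation lattice. Second, for each such pair, the isomorphism classes of extensions $1\to{\rm T}\to\Gamma\to P\to 1$ inject into $H^2(P;{\rm T})$ modulo the action of the relevant automorphism group, and $H^2(P;{\rm T})$ is finite because $P$ is finite and ${\rm T}$ is finitely generated. By part (2), abstract group isomorphism classes coincide with affine conjugacy classes of space groups, so these two finitenesses combine to give the result. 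As already indicated, the principal obstacle is the commutator/iteration argument in (1); once $P$ is known to be finite, the algebraic characterization of ${\rm T}$ in (2) and the cohomological counting in (3) are essentially standard.
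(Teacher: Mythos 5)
The paper does not prove this statement: Theorem~1 is quoted as the classical foundation of the subject (Bieberbach's theorems), and the authors rely on the literature (e.g.\ \S 7.4--7.5 of Ratcliffe \cite{R}) for its proof. So there is no in-paper argument to compare against; your outline follows the standard classical route, and parts (2) and (3) are essentially correct as sketches. In particular, the characterization of ${\rm T}$ as the unique maximal abelian subgroup of finite index (commuting with a spanning set of translations forces the linear part to be $I$), the extension of $f|_{{\rm T}_1}$ to a linear automorphism intertwining the point-group actions, and the averaging argument --- which is exactly the vanishing of $H^1(\Pi,E^n)$ for $\Pi$ finite --- producing the translational correction, are the standard proof of (2); the $H^2$ count is the standard proof of (3).

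Two places need repair. First, in (1) the commutator argument as you state it would fail: from an accumulation point of the point group you obtain $\gamma_k=a_k+A_k\in\Gamma$ with $A_k\to I$, $A_k\neq I$, but the vectors $a_k$ are not bounded, and the translational part of $[\phi,\psi]$ is controlled by $\|B-I\|\,|a|+\|A-I\|\,|b|$, so iterated commutators need not have translation vectors tending to $0$; moreover the iteration drives the rotational parts toward $I$ quadratically without ever necessarily reaching $I$, so one never actually produces a translation this way. The genuine proof requires a selection step (e.g.\ using compactness of $E^n/\Gamma$ to restrict to elements moving a basepoint a bounded distance, and then a minimality argument on $\|A-I\|$ combined with the estimate $\|[A,B]-I\|\le 2\|A-I\|\,\|B-I\|$) before the contradiction with discreteness can be extracted; this is the content you are compressing into one sentence, and it is the whole theorem. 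Second, in (3), Minkowski's lemma (torsion-freeness of the kernel of ${\rm GL}(n,\integers)\to{\rm GL}(n,\integers/3\integers)$) only bounds the \emph{order} of finite subgroups of ${\rm GL}(n,\integers)$; finiteness of the number of \emph{conjugacy classes} of such subgroups additionally requires the Jordan--Zassenhaus theorem on integral representations (finitely many $\integers[\Pi]$-lattice classes in each rank). With that citation corrected and the finiteness of $H^2(\Pi,{\rm T})$ for $\Pi$ finite and ${\rm T}$ finitely generated, the count in (3) goes through.
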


In dimensions $0,1,\ldots,6$, there are $1,2,17,219, 4\,783, 222\,018, 28\,927\,922$ 
isomorphism classes of space groups, respectively. 
See Brown et al. \cite{B-Z} and Plesken and Schulz \cite{P-S}.
 
Let $\Gamma$ be an $n$-dimensional space group. 
Define $\eta:\Gamma \to {\rm O}(n)$ by $\eta(a+A) = A$. 
Then $\eta$ is a homomorphism whose kernel is the group 
of translations in $\Gamma$. 
The image $\Pi$ of $\eta$ is a finite group by Part (1) of Theorem 1 called the 
{\it point group} of $\Gamma$. 
Let $\Nu$ be a normal subgroup of $\Gamma$. Define 
$${\rm Span}(\Nu) = {\rm Span}\{a\in E^n:a+I\in {\rm N}\}.$$

The following theorem strengthens Theorem 17 of Farkas \cite{Farkas}. 

\begin{theorem} 
Let ${\rm N}$ be a normal subgroup of an $n$-dimensional space group $\Gamma$, 
and let $V = {\rm Span}(\Nu)$. 
\begin{enumerate}
\item If $b+B\in\Gamma$, then $BV=V$. 
\item If $a+A\in \Nu$,  then $a\in V$ and $ V^\perp\subseteq{\rm Fix}(A)$. 
\item The group $\Nu$ acts effectively on each coset $V+x$ of $V$ in $E^n$ 
as a space group of isometries of $V+x$. 
\end{enumerate}
\end{theorem}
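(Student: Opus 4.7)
The plan is to prove the three parts in sequence, with part (2) doing most of the work and part (3) following by assembly. For part (1), I would take $b+B\in\Gamma$ and a translation $a+I\in N$ and compute $(b+B)(a+I)(b+B)^{-1}=Ba+I$, which lies in $N$ by normality. Thus $B$ carries the spanning set $\{a : a+I\in N\}$ of $V$ into itself, and since $V$ is finite-dimensional and $B$ is invertible, $BV=V$.

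For part (2), fix $a+A\in N$. The plan is to establish two containments which, taken together, cover all of $E^n$. \emph{First}, for an arbitrary translation $b+I\in\Gamma$ (not required to be in $N$), conjugation gives $(b+I)(a+A)(b+I)^{-1}=(a+(I-A)b)+A\in N$; dividing out $a+A$ leaves the pure translation $(I-A)b+I\in N$, so $(I-A)b\in V$. Since the translations of $\Gamma$ span $E^n$ by Bieberbach's theorem, $(I-A)E^n\subseteq V$; and because $A$ is orthogonal, $(I-A)E^n=\mathrm{Fix}(A)^\perp$, giving the orthogonality assertion $V^\perp\subseteq\mathrm{Fix}(A)$. \emph{Second}, let $k$ be the order of $A$ in the point group. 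Then $(a+A)^k=(I+A+\cdots+A^{k-1})a+I$ is a translation in $N$, so $(I+A+\cdots+A^{k-1})a\in V$; the eigenvalue decomposition of the orthogonal operator $A$ of order $k$ gives $I+A+\cdots+A^{k-1}=kP$, where $P$ is orthogonal projection onto $\mathrm{Fix}(A)$. Hence $Pa\in V$, while $(I-P)a\in\mathrm{Fix}(A)^\perp\subseteq V$ by the first step; adding the two components yields $a\in V$.

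Part (3) then assembles quickly. For $a+A\in N$ and $v+x\in V+x$, I would write $(a+A)(v+x)=(a+Av+(A-I)x)+x$; the three summands $a$, $Av$, $(A-I)x$ all lie in $V$ by (2), (1), and the first containment proved inside (2), so $N$ preserves each coset. For effectiveness: if $a+A$ fixes $V+x$ pointwise, setting $v=0$ gives $a=(I-A)x$, and varying $v$ forces $A|_V=I$; combined with $V^\perp\subseteq\mathrm{Fix}(A)$ this gives $A=I$ and then $a=0$. Discreteness and cocompactness come from the translations $N\cap T$: they form a full-rank lattice in $V$, and $N/(N\cap T)\hookrightarrow\Gamma/T=\Pi$ is finite, so $N\cap T$ has finite index in $N$ and $N$ acts on the Euclidean space $V+x$ of dimension $\dim V$ as a discrete group with compact quotient.

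The main obstacle I anticipate is the bookkeeping in part (2). Each of the two containments is individually easy to obtain, but neither alone suffices: the cyclic-power trick $(a+A)^k$ captures only the $\mathrm{Fix}(A)$-component of $a$, while the conjugate-and-cancel trick captures only $\mathrm{Fix}(A)^\perp$. Recognizing that these two subspaces are complementary and that combining them yields both $a\in V$ and $V^\perp\subseteq\mathrm{Fix}(A)$ simultaneously is the essential step, and the rest of the theorem follows from it by routine calculation.
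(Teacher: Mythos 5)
Your proof is correct, and while it follows the same overall skeleton as the paper's (conjugation by translations of $\Gamma$ to control $\mathrm{Fix}(A)^\perp$, then the power trick $(a+A)^k$ to kill the remaining component of $a$), your treatment of the first containment is genuinely different and simpler. The paper gets $(A-I)b\in V$ by a detour: it lets the finite group ${\rm N}/({\rm N}\cap{\rm T})$ act on the Euclidean space $E^n/V$, invokes the fixed-point theorem for finite isometry groups to find a coset $V+x$ stabilized by ${\rm N}$, conjugates ${\rm N}$ to a group ${\rm N}'$ preserving $V$ itself so that the corrected translation parts $a'=a+(A-I)x$ lie in $V$, and only then conjugates by translations of $\Gamma$. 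You bypass all of this with the single observation that
\[
\bigl[(b+I)(a+A)(b+I)^{-1}\bigr](a+A)^{-1}=(I-A)b+I
\]
is a product of two elements of ${\rm N}$ and hence a pure translation \emph{in} ${\rm N}$, giving $(I-A)b\in V$ directly from the definition of $V$; combined with $\mathrm{im}(I-A)=\mathrm{Fix}(A)^\perp$ for orthogonal $A$ this yields $V^\perp\subseteq\mathrm{Fix}(A)$ without any fixed-point argument. Your identity $I+A+\cdots+A^{k-1}=kP$ with $P$ the orthogonal projection onto $\mathrm{Fix}(A)$ is the cyclic case of the paper's later Lemma 4, and using it to split $a=Pa+(I-P)a$ is an equivalent variant of the paper's decomposition $a=b+c$ with $c\in V^\perp$. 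Nothing is lost by your shortcut: the only extra fact the paper's detour produces (that ${\rm N}$ stabilizes some coset of $V$) is subsumed by part (2) once proved. One small point on which you and the paper are equally terse: in part (3) one should note why the image of ${\rm N}$ in $\mathrm{Isom}(V+x)$ is \emph{discrete}; your remark that it contains the full-rank lattice ${\rm N}\cap{\rm T}$ with finite index is the right justification and is no less complete than the paper's.
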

\begin{proof}
(1) Let $a+I\in {\rm N}$ and let $b+B\in\Gamma$, 
then $(b+B)(a+I)(b+B)^{-1} = Ba+I\in {\rm N}$. 
Hence $B$ leaves $V$ invariant. 

(2) The coset space $E^n/V$ is a Euclidean space 
where the distance between cosets is the orthogonal distance in $E^n$. 
The quotient map from $E^n$ to $E^n/V$ maps $V^\perp$ isometrically onto $E^n/V$. 
The group $\Gamma$ acts isometrically on $E^n/V$ by $(b+B)(V+x) = V+b+Bx$. 

Let ${\rm T} = \{a+I\in\Gamma\}$. 
Then ${\rm N}/{\rm N}\cap{\rm T} \cong {\rm NT}/{\rm T} \subseteq \Gamma/{\rm T}$, 
and so ${\rm N}/{\rm N}\cap{\rm T}$ is a finite group. 
The group ${\rm N}\cap{\rm T}$ acts trivially on $E^n/V$, 
and so ${\rm N}/{\rm N}\cap{\rm T}$ acts isometrically on $E^n/V$.  
Therefore ${\rm N}/{\rm N}\cap{\rm T}$ fixes a point $V+x$ of $E^n/V$. 
This implies that the group ${\rm N}$ leaves the coset $V+x$ invariant. 
Let ${\rm N}' = (-x+I){\rm N}(x+I)$. 
Then ${\rm N}'$ leaves $V$ invariant. 
Let $a+ A \in N$. 
Then $(-x+I)(a+A)(x+I) = a+ Ax-x+A$. 
Let $a' = a+ (A-I)x$. 
Then $a'+ A\in {\rm N}'$. 
As $a'+A$ leaves $V$ invariant, $a'\in V$. 

Let $b+I\in \Gamma$.  Then $b+I \in (-x+I)\Gamma(x+I)$, 
since $b+I$ and $x+I$ commute. 
Let $a+A\in {\rm N}$. 
Then $(-b+I)(a'+A)(b+I) = a'+(A-I)b+A$ is in ${\rm N}'$. 
Hence $(A-I)b\in V$. 
Now $\{b: b+I\in \Gamma\}$ spans $E^n$ by Theorem 1. 
Let $W = \big({\rm Fix}(A)\big)^\perp$. 
Then $A-I$ maps $W$ isomorphically onto $W$, 
and so $A-I$ maps $E^n$ onto $W$. 
Hence $\{(A-I)b:b+I\in\Gamma\}$ spans $W$. 
Therefore $W\subseteq V$. 
Hence $V^\perp \subseteq W^\perp = {\rm Fix}(A)$. 

Let $a+ A\in {\rm N}$. Write $a= b+c$ with $b\in V$ and $c\in V^\perp$. 
Let $r$ be the order of $A$. 
Then $r$ is finite and 
\begin{eqnarray*}
(a+A)^r  & = & a + Aa + \cdots + A^{r-1}a + I\\ 
& = & b + Ab+ \cdots + A^{r-1}b + rc +I\ \ = \ \ d + I. 
\end{eqnarray*}
As $b + Ab+ \cdots + A^{r-1}b,\ d \in V$, we have $rc=0$, and so $c=0$. 
Hence $a\in V$. 

(3) If $a+A\in\Nu$, then $a\in V$ and $AV=V$, and so $(a+A)V = V$. 
Hence the group  ${\rm N}$ leaves $V$ invariant. 
As $V^\perp \subseteq {\rm Fix}(A)$ for each $a+A\in {\rm N}$, 
we deduce that ${\rm N}$ acts effectively on $V+x$ for each $x\in V^\perp$. 

Let $a_1+I,\ldots, a_k+I$ be translations in ${\rm N}$ 
such that $\{a_1,\ldots,a_k\}$ is a basis for $V$. 
Then $(V+x)/\langle a_1+I,\ldots,a_k+I\rangle$ is a $k$-torus 
for each $x\in V^\perp$. 
Hence $(V+x)/{\rm N}$ is compact for each $x\in V^\perp$. 
Therefore ${\rm N}$ acts effectively as a space group of isometries of $V+x$ 
for each $x\in V^\perp$. 
\end{proof}

Let ${\rm N}$ be a normal subgroup of an $n$-dimensional space group $\Gamma$, 
and let $V = {\rm Span}(\Nu)$. 
The group ${\rm N}$ acts trivially on $E^n/V$ by Theorem 2, 
and so $\Gamma/{\rm N}$ acts isometrically on $E^n/V$ by
$$({\rm N}(b+B))(V+x) = {\rm N}((b+B)(V+x))= V+ b+Bx.$$ 
Observe that $\Nu(b+B)$ fixes $V$ if and only if $b\in V$. 
Hence the kernel of the corresponding homomorphism from $\Gamma/{\rm N}$ to ${\rm Isom}(E^n/V)$ 
is $\ov{\rm N}/{\rm N}$ where 
$$\ov{\rm N} = \{b+B\in \Gamma: b\in V\ \hbox{and}\ V^\perp\subseteq{\rm Fix}(B)\}.$$
As ${\rm N}\subseteq \ov{\rm N}$, the orbit space $V/{\rm N}$ projects onto $V/\ov{\rm N}$. 
The orbit space $V/{\rm N}$ is compact by Theorem 2,  
and so $V/\ov{\rm N}$ is compact. 
Hence, the group $\ov{\rm N}$ acts effectively 
as a space group of isometries of $V$. 
Therefore ${\rm N}$ has finite index in $\ov{\rm N}$. 

The group ${\rm N}$ may be a proper subgroup of $\ov{\rm N}$. 
For example, if ${\rm N} = \{a+I\in \Gamma\}$, then $\ov{\rm N} = \Gamma$. 
As $\ov{\rm N}/{\rm N}$ is a normal subgroup of $\Gamma/{\rm N}$, 
we have that $\ov{\rm N}$ is a normal subgroup of $\Gamma$. 
The group $\Gamma/\ov{\rm N}$ acts effectively on $E^n/V$ 
as a group of isometries. 

Let ${\rm H_1}$ and ${\rm H_2}$ be subgroups of a group $\Gamma$. 
Then ${\rm H_1}$ and ${\rm H_2}$ are said to be ${\it commensurable}$ 
if ${\rm H_1\cap H_2}$ has finite index in both ${\rm H_1}$ and ${\rm H_2}$. 
Commensurability is an equivalence relation on the set of subgroups of $\Gamma$. 

\begin{theorem} 
Let ${\rm N_1}$ and ${\rm N_2}$ be normal subgroups of a space group $\Gamma$. 
Then $\ov{\rm N_1} = \ov{\rm N_2}$ if and only if ${\rm N}_1$ and ${\rm N}_2$ 
are commensurable. 
\end{theorem}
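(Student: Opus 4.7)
The plan is to prove the two implications separately, with the crux being that commensurability of $N_1$ and $N_2$ forces ${\rm Span}(N_1) = {\rm Span}(N_2)$.

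For the forward direction, suppose $\ov{\rm N_1} = \ov{\rm N_2}$ and call this common group $\ov{\rm N}$. The discussion preceding the theorem established that ${\rm N}_i$ has finite index in $\ov{\rm N_i}$, so both ${\rm N}_1$ and ${\rm N}_2$ are finite-index subgroups of $\ov{\rm N}$. The intersection of two finite-index subgroups has finite index in the ambient group, so ${\rm N}_1 \cap {\rm N}_2$ has finite index in $\ov{\rm N}$, and therefore has finite index in each of ${\rm N}_1$ and ${\rm N}_2$. Hence ${\rm N}_1$ and ${\rm N}_2$ are commensurable.

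For the converse, suppose ${\rm N}_1$ and ${\rm N}_2$ are commensurable, and let $V_i = {\rm Span}({\rm N}_i)$. I claim $V_1 = V_2$. Let $a+I \in {\rm N}_1$. Since ${\rm N}_1 \cap {\rm N}_2$ has finite index in ${\rm N}_1$, the element $a+I$ has finite order modulo ${\rm N}_1 \cap {\rm N}_2$, so there is a positive integer $k$ with $(a+I)^k = ka+I \in {\rm N}_1 \cap {\rm N}_2 \subseteq {\rm N}_2$. Thus $ka \in V_2$, and since $V_2$ is a real vector subspace, $a \in V_2$. Therefore the generating set $\{a : a+I \in {\rm N}_1\}$ of $V_1$ lies in $V_2$, giving $V_1 \subseteq V_2$. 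The symmetric argument yields $V_2 \subseteq V_1$, so $V_1 = V_2$.

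Finally, the definition $\ov{\rm N_i} = \{b+B\in\Gamma : b\in V_i\ \mathrm{and}\ V_i^\perp\subseteq{\rm Fix}(B)\}$ depends only on $V_i$, so $V_1 = V_2$ immediately gives $\ov{\rm N_1} = \ov{\rm N_2}$. There is no serious obstacle here; the only point requiring attention is the passage from commensurability to the identification $V_1 = V_2$, which uses nothing deeper than the fact that a finite-index subgroup contains some positive power of every element of the ambient group, together with the observation that vector subspaces are closed under division by nonzero scalars.
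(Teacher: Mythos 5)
Your proof is correct and follows essentially the same route as the paper's: both directions rest on the facts that $\ov{\rm N}_i$ depends only on ${\rm Span}({\rm N}_i)$ and that ${\rm N}_i$ has finite index in $\ov{\rm N}_i$. The only cosmetic differences are that you use the Poincar\'e intersection argument where the paper applies the second isomorphism theorem to ${\rm N}_1{\rm N}_2$, and you pass to powers of individual translations where the paper compares the translation lattices ${\rm N}_i\cap{\rm T}$.
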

\begin{proof}
Suppose $\Nu_1$ and $\Nu_2$ are commensurable. Let $\Tau=\{a+I\in\Gamma\}$. 
Now $\Nu_1\cap\Nu_2\cap\Tau$ has finite index in $\Nu_1\cap\Nu_2$ 
and $\Nu_1\cap\Nu_2$ has finite index in $\Nu_i$ for $i=1,2$. 
Hence $\Nu_1\cap\Nu_2\cap\Tau$ has finite index in $\Nu_i\cap\Tau$ for $i=1,2$. 
Therefore 
${\rm Span}(\Nu_1\cap\Nu_2) = {\rm Span}(\Nu_i)$
for $i = 1,2$. Hence $\ov{\rm N_1} = \ov{\rm N_2}$. 

Conversely, suppose $\ov{\rm N_1} = \ov{\rm N_2}$. 
Now $\Nu_i$ has finite index in $\ov{\Nu_i}$ for $i=1,2$. 
As $\Nu_1\Nu_2\subseteq \ov{\Nu_1}=\ov{\Nu_2}$, 
we have that $\Nu_i$ has finite index in $\Nu_1\Nu_2$ for $i=1,2$. 
Now $\Nu_1/\Nu_1\cap\Nu_2\cong \Nu_1\Nu_2/\Nu_2$ 
and $\Nu_2/\Nu_1\cap\Nu_2\cong \Nu_1\Nu_2/\Nu_1$. 
Hence $\Nu_1\cap\Nu_2$ has finite index in both $\Nu_1$ and $\Nu_2$. 
Therefore $\Nu_1$ and $\Nu_2$ are commensurable.  
\end{proof}

\begin{corollary} 
If $\Nu$ is a normal subgroup of a space group $\Gamma$, 
then $\ov{\ov\Nu} = \ov\Nu$ and 
$\ov\Nu$ is the unique maximal element of the commensurability class 
of normal subgroups of $\Gamma$ that contains $\Nu$. 
\end{corollary}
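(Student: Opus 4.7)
The plan is to deduce both assertions directly from Theorem~3, together with two ingredients already established in the discussion preceding Theorem~3: that $\Nu \subseteq \ov{\Nu}$ and that $\Nu$ has finite index in $\ov{\Nu}$.

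First I would verify that $\Nu \subseteq \ov{\Nu}$. This is immediate from Theorem~2(2): for each $a+A \in \Nu$, we have $a \in V = {\rm Span}(\Nu)$ and $V^\perp \subseteq {\rm Fix}(A)$, so $a+A$ lies in $\ov{\Nu}$ by definition. Combined with the already-noted fact that $\Nu$ has finite index in $\ov{\Nu}$, this shows that $\Nu$ and $\ov{\Nu}$ are commensurable. Since $\ov{\Nu}$ was shown to be a normal subgroup of $\Gamma$, Theorem~3 applied to the pair $(\Nu,\ov{\Nu})$ then yields $\ov{\ov{\Nu}} = \ov{\Nu}$.

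For the maximality statement, I would let $\Mu$ be any normal subgroup of $\Gamma$ that is commensurable with $\Nu$. Theorem~3 gives $\ov{\Mu} = \ov{\Nu}$, and the inclusion $\Mu \subseteq \ov{\Mu}$ (proved exactly as for $\Nu$ above) yields $\Mu \subseteq \ov{\Nu}$. Hence every member of the commensurability class of $\Nu$ among normal subgroups of $\Gamma$ is contained in $\ov{\Nu}$. Since $\ov{\Nu}$ itself belongs to that class, it is the unique maximal element.

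There is no real obstacle in this argument; the corollary amounts to the observation that $\Nu \mapsto \ov{\Nu}$ is a closure operation on normal subgroups of $\Gamma$ which picks out the maximum element of each commensurability class, and both parts of the statement drop out mechanically from Theorem~3 once this viewpoint is adopted.
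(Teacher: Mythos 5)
Your proof is correct and follows exactly the route the paper intends: the corollary is stated without proof as an immediate consequence of Theorem~3 together with the facts, established in the discussion after Theorem~2, that $\Nu\subseteq\ov\Nu$ with finite index and that $\ov\Nu$ is normal in $\Gamma$. Your two applications of Theorem~3 (to the pair $(\Nu,\ov\Nu)$ for idempotence, and to an arbitrary commensurable normal subgroup $\Mu$ for maximality) are exactly right.
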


\begin{corollary} 
If $\psi: \Gamma \to \Gamma'$ is an isomorphism of space groups, 
and $\Nu$ is a normal subgroup of $\Gamma$, then $\ov{\psi(\Nu)} = \psi(\ov\Nu)$. 
\end{corollary}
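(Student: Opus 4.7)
The plan is to leverage the purely algebraic characterization of the bar operation provided by Corollary 1: $\ov\Nu$ is the unique maximal element, under inclusion, of the commensurability class of normal subgroups of $\Gamma$ that contains $\Nu$. Once we have this characterization, no geometric input and no appeal to Bieberbach's theorem are needed; I only use that a group isomorphism preserves normality and finite index, and therefore preserves commensurability.

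First, I would check that $\psi(\ov\Nu)$ is a normal subgroup of $\Gamma'$ commensurable with $\psi(\Nu)$. Normality is immediate because $\psi$ is a surjective homomorphism, and commensurability is immediate from the fact that $\Nu$ has finite index in $\ov\Nu$, which $\psi$ transports to $\Gamma'$. Applying Corollary 1 inside $\Gamma'$ then gives the inclusion $\psi(\ov\Nu)\subseteq\ov{\psi(\Nu)}$.

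For the reverse inclusion, I would run the same argument symmetrically, using the isomorphism $\psi^{-1}\colon\Gamma'\to\Gamma$ applied to the normal subgroup $\psi(\Nu)$ of $\Gamma'$. This yields $\psi^{-1}(\ov{\psi(\Nu)})\subseteq\ov{\psi^{-1}(\psi(\Nu))}=\ov\Nu$, and applying $\psi$ to both sides gives $\ov{\psi(\Nu)}\subseteq\psi(\ov\Nu)$, completing the proof.

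There is no substantive obstacle here; the only point worth highlighting is the recognition that the bar operation, although defined geometrically via ${\rm Span}(\Nu)$ and orthogonal fixed-point sets, admits the purely group-theoretic interpretation of Corollary 1, and that interpretation is manifestly preserved by any abstract group isomorphism between space groups.
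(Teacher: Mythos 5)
Your proof is correct and follows exactly the route the paper intends: Corollary 1 gives a purely group-theoretic characterization of $\ov\Nu$ (the maximum of the commensurability class of normal subgroups containing $\Nu$), and this is manifestly preserved by any abstract isomorphism, with the symmetric argument via $\psi^{-1}$ supplying the reverse inclusion. No gaps.
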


\section{Geometric Flat Orbifold Fibrations} 

We say that the normal subgroup $\Nu$ of a space group 
$\Gamma$ is a {\it complete} if $\ov\Nu=\Nu$. 
If $\Nu$ is a normal subgroup of $\Gamma$, then $\ov\Nu$ 
is a complete normal subgroup of $\Gamma$ by Theorem 3 
called the {\it completion} of $\Nu$ in $\Gamma$. 

\begin{lemma} 
Let $\Nu$ be a complete normal subgroup of an $n$-dimensional space 
group $\Gamma$, and let $V={\rm Span}(\Nu)$. 
Then $\Gamma/\Nu$ acts effectively as a discrete group of isometries of $E^n/V$. 
\end{lemma}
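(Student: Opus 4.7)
The effectiveness is essentially free from the paragraph preceding the lemma: the kernel of $\Gamma/\Nu\to{\rm Isom}(E^n/V)$ was identified there as $\ov\Nu/\Nu$, which is trivial because $\Nu=\ov\Nu$ is complete. So the real content is discreteness. My plan is to pass to $V^\perp$ by identifying $E^n/V$ with $V^\perp$ via orthogonal projection, under which the coset $\Nu(b+B)$ acts on $V^\perp$ as the isometry $x\mapsto b_{V^\perp}+B|_{V^\perp}x$, where $b_{V^\perp}$ is the $V^\perp$-component of $b$ and $B|_{V^\perp}$ is well defined because $B$ preserves $V$ by Theorem 2(1) and hence $V^\perp$.

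The key preliminary step is a sublemma: the orthogonal projection $p(T)\subseteq V^\perp$ of the translation lattice $T=\{a:a+I\in\Gamma\}$ is a full lattice in $V^\perp$. This uses Theorem 2 twice: by (3), $\Nu$ acts on $V$ as a space group, so the translation subgroup of $\Nu$ — which by (2) equals $\Nu\cap T$ and is contained in $T\cap V$ — is a full-rank lattice in $V$; hence $T\cap V$ is itself a full-rank lattice of $V$, and a standard adapted-basis argument shows that $T/(T\cap V)$ embeds in $V^\perp$ as a lattice of rank $n-\dim V$.

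Granting the sublemma, the main argument is to show that any sequence with $\Nu(b_i+B_i)\to {\rm id}$ in ${\rm Isom}(V^\perp)$ is eventually trivial. Convergence forces $B_i|_{V^\perp}\to I$; since $B_i$ lies in the finite point group $\Pi$, only finitely many restrictions $B|_{V^\perp}$ can occur, so $B_i|_{V^\perp}=I$ (equivalently $V^\perp\subseteq{\rm Fix}(B_i)$) for $i$ large. Writing $b_i=u_i+v_i$ with $u_i\in V$ and $v_i\in V^\perp$, and letting $r_i$ be the order of $B_i$, the power $(b_i+B_i)^{r_i}$ is a translation in $\Gamma$ whose $V^\perp$-component is $r_iv_i$; hence $|\Pi|\cdot v_i\in p(T)$. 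Since $v_i\to 0$ and $p(T)$ is discrete, $v_i=0$ for $i$ large, so $b_i\in V$ and $b_i+B_i\in\ov\Nu=\Nu$, as needed.

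The step I expect to be the main obstacle is the sublemma identifying $p(T)$ as a lattice in $V^\perp$: this is where the hypothesis $V={\rm Span}(\Nu)$ genuinely enters, and it pulls together both parts (2) and (3) of Theorem 2 (without completeness of $\Nu$ playing any role). Once $p(T)$ is known to be a lattice, the rest of the argument is a clean finite-point-group reduction, with the trick of replacing $b+B$ by its $r$-th power to expose a translation providing the only non-formal ingredient.
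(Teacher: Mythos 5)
Your proof is correct, and it shares its essential core with the paper's proof --- namely, the fact that the translation lattice of $\Gamma$ projects to a genuine lattice in $E^n/V\cong V^\perp$, established via an adapted basis --- but the two arguments package this differently. The paper first observes that it suffices to prove discreteness for the finite-index subgroup $\Nu\Tau/\Nu\cong \Tau/(\Nu\cap\Tau)$, uses completeness of $\Nu$ to show this quotient is torsion-free (if $ra\in V$ then $a\in V$, so $a+I\in\ov\Nu=\Nu$), and then reads off discreteness from the adapted basis by citing standard results; non-translation elements never have to be confronted. You instead run a direct convergence argument on all of $\Gamma/\Nu$: your sublemma that the projection of $T$ is a full lattice in $V^\perp$ is proved without completeness (since $T/(T\cap V)$ is automatically torsion-free), and completeness is invoked only at the last step to conclude $b_i+B_i\in\ov\Nu=\Nu$. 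The price is that you must dispose of the rotational parts by hand, which you do correctly: finiteness of the point group forces $B_i|_{V^\perp}=I$ eventually, and the passage to the $r_i$-th power to produce a translation with $V^\perp$-component $r_iv_i$ (hence $|\Pi|v_i$ in the projected lattice) is exactly the trick the paper itself uses in the proof of Theorem 2(2). What the paper's route buys is brevity, at the cost of silently using that discreteness ascends from a finite-index subgroup; what your route buys is a self-contained argument that makes explicit where each hypothesis (finiteness of $\Pi$, completeness of $\Nu$) enters.
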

\begin{proof}
From the above discussion, we know that $\Gamma/\Nu$ 
acts effectively as a group of isometries of $E^n/V$. 
As $\Nu\Tau/\Nu$ is of finite index in $\Gamma/\Nu$, 
it suffices to show that $\Nu\Tau/\Nu$ acts as a discrete group 
of isometries of $E^n/V$. 
Now $\Nu\Tau/\Nu\cong \Tau/\Nu\cap\Tau$. 
We claim that the group $\Tau/\Nu\cap\Tau$ is torsion-free. 
Let $\tau =a+I\in{\rm T}$ and suppose tha $\tau^r\in {\rm N}\cap{\rm T}$ for some integer $r>0$. 
Then $ra+I \in {\rm N}\cap{\rm T}$. Hence $ra\in V$, and so $a\in V$. 
Therefore $\tau\in {\rm N}\cap{\rm T}$, since $\Nu$ is complete.  
Thus ${\rm T}/{\rm N}\cap{\rm T}$ is torsion-free. 
Hence ${\rm T}$ has a set of generators $\{b_1+I,\ldots, b_n+I\}$ such that 
$b_1+I,\ldots, b_k+I$ generate ${\rm N}\cap{\rm T}$. 
Then ${\rm N}(b_{k+1}+I),\ldots,{\rm N}(b_n+I)$ generate ${\rm N}{\rm T}/{\rm N}$. 
As $\{b_{k+1},\ldots, b_n\}$ projects to a basis of $E^n/V$, we have that 
${\rm N}{\rm T}/{\rm N}$ acts as a discrete group of isometries of $E^n/V$ 
by Theorems 5.2.4 and 5.3.2 of Ratcliffe \cite{R}. 
\end{proof}

\begin{theorem}  
Let ${\rm N}$ be a complete normal subgroup of an $n$-dimensional space group $\Gamma$, 
and let $V = {\rm Span}({\rm N})$.  
Then the flat orbifold $E^n/\Gamma$ geometrically fibers over the flat orbifold 
$(E^n/V)/(\Gamma/{\rm N})$ with generic fiber the flat orbifold $V/{\rm N}$. 
\end{theorem}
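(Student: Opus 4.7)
The plan is to make the fibration concrete by exploiting the orthogonal splitting $E^n=V\oplus V^\perp$. By Theorem 2(1), every $b+B\in\Gamma$ satisfies $BV=V$, and since $B\in{\rm O}(n)$ also $BV^\perp=V^\perp$. Writing points of $E^n$ as pairs $(v,w)$ with $v\in V$, $w\in V^\perp$ and decomposing $b=b_V+b_{V^\perp}$ accordingly, the action of $\Gamma$ takes the form $(v,w)\mapsto(Bv+b_V,\,Bw+b_{V^\perp})$. For $a+A\in\Nu$, Theorem 2(2) gives $a\in V$ and $V^\perp\subseteq{\rm Fix}(A)$, so this action degenerates to $(v,w)\mapsto(Av+a,\,w)$. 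Hence $\Nu$ affects only the first coordinate, $E^n/\Nu$ is naturally isometric to $F\times V^\perp$ with $F=V/\Nu$, and $\Gamma/\Nu$ acts diagonally on $F\times V^\perp$ by isometries; the action on the $V^\perp$ factor coincides with the action of $\Gamma/\Nu$ on $E^n/V$ under the standard isometry $E^n/V\cong V^\perp$.

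Next, I would define the fibration projection $\eta:E^n/\Gamma\to V^\perp/(\Gamma/\Nu)$ as the map induced by the second-coordinate projection $F\times V^\perp\to V^\perp$, after identifying $E^n/\Gamma\cong(F\times V^\perp)/(\Gamma/\Nu)$ and $(E^n/V)/(\Gamma/\Nu)\cong V^\perp/(\Gamma/\Nu)$. Well-definedness is immediate from the diagonal form of the $\Gamma/\Nu$-action. The fiber of $\eta$ over $y=(\Gamma/\Nu)w_0$ is then seen to be $F/G_y$, where $G_y\leq\Gamma/\Nu$ is the stabilizer of $w_0$; this $G_y$ is finite by the discreteness provided by Lemma 1.

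For the local product structure, fix $y$ as above. By Lemma 1 together with the remark preceding it (which uses completeness to identify the kernel $\ov\Nu/\Nu$ as trivial), $\Gamma/\Nu$ acts discretely and effectively on $V^\perp$, so I can choose $r>0$ small enough that any $g\in\Gamma/\Nu$ with $g\cdot B(w_0,r)\cap B(w_0,r)\neq\emptyset$ lies in $G_y$. Since $G_y$ fixes $w_0$ and acts by isometries, after translating $w_0$ to the origin its action on $B(w_0,r)$ is by elements of ${\rm O}(V^\perp)$, and it is effective because an orthogonal transformation fixing an open ball is the identity. The saturation $(\Gamma/\Nu)\cdot(F\times B(w_0,r))$ inside $F\times V^\perp$ then splits as the disjoint union $\bigsqcup_{gG_y}g\cdot(F\times B(w_0,r))$ over cosets, yielding
$$\eta^{-1}(B(y,r))\;\cong\;(F\times B(w_0,r))/G_y,$$
with $\eta$ corresponding to the natural projection onto $B(w_0,r)/G_y$. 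This is exactly the local model required by the definition of a geometric fibration.

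The main obstacle is this last step, and completeness of $\Nu$ is essential throughout it: without it, Lemma 1 fails, the $\Gamma/\Nu$-action on $V^\perp$ could be non-discrete or non-effective, and both the choice of an isolating radius $r$ and the disjointness in the saturation collapse. Once discreteness and effectiveness are in hand, the remaining verification—that the $G_y$-action on $F\times B(w_0,r)$ is diagonal, isometric on $F$, and orthogonal and effective on $B(w_0,r)$—is a direct read-off from the product decomposition set up in the first paragraph.
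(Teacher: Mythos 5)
Your proposal is correct and takes essentially the same approach as the paper: both rest on Theorem 2 (elements of $\Nu$ translate only along $V$ and fix $V^\perp$-directions), on Lemma 1 together with completeness (so that $\Gamma/\Nu$ acts effectively and discretely on $E^n/V\cong V^\perp$), and on the resulting product decomposition of the $\Nu$-quotient. The only cosmetic difference is that you establish $E^n/\Nu\cong F\times V^\perp$ globally and then localize, whereas the paper localizes first, identifying $U/\Nu\cong\big((V+x)/\Nu\big)\times B_x$ for the $r$-neighborhood $U$ of $V+x$.
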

\begin{proof}
Let $\eta:E^n/\Gamma\to (E^n/V)/(\Gamma/{\rm N})$ be the map 
defined by 
$$\eta(\Gamma x) = (\Gamma/{\rm N})(V+x) = \Gamma(V+x).$$
Then the following diagram commutes
\[\begin{array}{ccc}
E^n & {\buildrel \tilde{\eta} \over \longrightarrow} & E^n/V \\
 \downarrow  & & \downarrow \\
E^n/\Gamma & {\buildrel \eta \over \longrightarrow} &(E^n/V)/(\Gamma/{\Nu}) 
  \end{array}
\] 
where $\tilde\eta$ and the vertical maps are the quotient maps. 
Hence $\eta$ is a continuous surjection, 
and so $(E^n/V)/(\Gamma/{\rm N})$ is a compact flat $m$-orbifold 
with $m = {\rm dim}(E^n/V)$.  

Let $x\in V^\perp$, and let 
$$\Gamma_{V+x} = \{\gamma\in\Gamma: \gamma(V+x) = V+x\}.$$
Then the stabilizer of $V+x$ in $\Gamma/{\rm N}$ is $G_x=\Gamma_{V+x}/{\rm N}$. 
By Lemma 1, the group $G_x$ is finite. 
Let $r$ be half the distance from $V+x$ 
to the set of remaining points of the orbit $(\Gamma/{\rm N})(V+x)$ in $E^n/V$. 
By Lemma 1, we have that $r>0$. 
Let $B_x$ be the open ball of radius $r$ centered at $V+x$ in $E^n/V$. 
The quotient map from $E^n/V$ to $(E^n/V)/(\Gamma/{\rm N})$ maps 
$B_x$ onto $(\Gamma/{\rm N})B_x/(\Gamma/{\rm N})$ which is isometric 
to $B_x/G_x$ with $G_x$ acting effectively and orthogonally on $B_x$. 

Let $U$ be the $r$-neighborhood of $V+x$ in $E^n$. 
Then we have 
$$\eta^{-1}((\Gamma/{\rm N})B_x/(\Gamma/{\rm N})) = \Gamma U/\Gamma.$$
Now $\Gamma U/\Gamma$ is isometric to $U/\Gamma_{V+x}$. 
Moreover we have
$$U/\Gamma_{V+x} = (U/{\rm N})/(\Gamma_{V+x}/{\rm N}) = (U/{\rm N})/G_x.$$ 
The group ${\rm N}$ acts trivially on $E^n/V$. 
Hence $U/{\rm N}$ is isometric to $\big((V+x)/{\rm N}\big)\times B_x$. 
Let $F= V/{\rm N}$.  Then $F$ is a compact flat $(n-m)$-orbifold. 
Now $F$ is isometric to $(V+x)/{\rm N}$, since ${\rm N}$ acts trivially on $E^n/V$.  
The finite group $G_x$ acts diagonally on 
$\big((V+x)/{\rm N}\big)\times B_x$, isometrically on $(V+x)/{\rm N}$, 
and effectively and orthogonally on $B_x$. 
We have a commutative diagram
\[\begin{array}{ccc}
\Gamma U/\Gamma & {\buildrel \ov\eta\over\longrightarrow} &(\Gamma/{\rm N})B_x/(\Gamma/{\rm N}) \\
 \downarrow  & & \downarrow \\
\big(\big((V+x)/{\rm N}\big)\times B_x\big)/G_x &  \longrightarrow & B_x/G_x  
\end{array}\] 
where the vertical maps are isometries, $\ov\eta$ is the restriction of $\eta$, 
and the bottom map is the natural projection. 
Thus $E^n/\Gamma$ geometrically fibers over the flat $m$-orbifold $(E^n/V)/(\Gamma/{\rm N})$ 
with generic fiber the flat $(n-m)$-orbifold $F=V/{\rm N}$. 
\end{proof}

Let ${\rm N}$ be a normal subgroup of an $n$-dimensional space group $\Gamma$.  
We call the map $\eta:E^n/\Gamma\to (E^n/V)/(\Gamma/\ov{\rm N})$ 
defined in the proof of Theorem 4, the {\it fibration projection determined by} ${\rm N}$. 
Let ${\rm T} = \{a+I\in\Gamma\}$. 
By Theorem 4, the fibration projection $\eta$ determined by $\Nu$ 
is an injective Seifert fibering, with typical fiber $V/({\rm N}\cap{\rm T})$,  
in the sense of Lee and Raymond \cite{L-R}.

\begin{theorem} 
Let ${\rm N}$ be a normal subgroup of a space group $\Gamma$. 
Then the following are equivalent: 
\begin{enumerate}
\item The quotient group $\Gamma/{\rm N}$ is a space group. 
\item The quotient group $\Gamma/{\rm N}$ has no nontrivial finite normal subgroups. 
\item The normal subgroup ${\rm N}$ of $\Gamma$ is complete.
\end{enumerate}
\end{theorem}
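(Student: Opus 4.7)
The plan is to prove the equivalence by establishing the cyclic chain of implications $(3) \Rightarrow (1) \Rightarrow (2) \Rightarrow (3)$, since each step naturally uses results already available in the paper.

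\medskip

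\noindent\textbf{(3) $\Rightarrow$ (1).} Assume $\Nu$ is complete. Let $V = {\rm Span}(\Nu)$. The coset space $E^n/V$ is isometric to $E^m$ with $m = n - \dim V$, and by Lemma 1 the quotient $\Gamma/\Nu$ acts effectively as a discrete group of isometries on $E^n/V$. The orbit space $(E^n/V)/(\Gamma/\Nu)$ is the continuous image of the compact space $E^n/\Gamma$ under the fibration projection $\eta$ produced in Theorem 4, hence is compact. A discrete group of isometries of a Euclidean space with compact orbit space is a space group, so $\Gamma/\Nu$ is an $m$-dimensional space group.

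\medskip

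\noindent\textbf{(1) $\Rightarrow$ (2).} This is the step I expect to take the most work, since I need the standard fact that a space group has no nontrivial finite normal subgroup. I would establish it directly using Bieberbach's Theorem 1. Let $F$ be a finite normal subgroup of a space group $\Delta$ with translation subgroup ${\rm T}$. Because ${\rm T}$ is free abelian, $F \cap {\rm T} = \{1\}$. For any $f = c + f' \in F$ and any translation $t = a + I \in {\rm T}$, a direct computation gives
\[
t f t^{-1} f^{-1} = \bigl(a - f'(a)\bigr) + I,
\]
which lies in $F \cap {\rm T} = \{1\}$. Hence $f'(a) = a$ for every $a$ with $a+I \in {\rm T}$, and since such $a$ span $E^n$, we get $f' = I$ and then $f = I$. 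Applying this with $\Delta = \Gamma/\Nu$ gives (2).

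\medskip

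\noindent\textbf{(2) $\Rightarrow$ (3).} By the paragraph preceding Theorem 3, $\Nu$ has finite index in $\ov\Nu$, and $\ov\Nu$ is normal in $\Gamma$. Therefore $\ov\Nu/\Nu$ is a finite normal subgroup of $\Gamma/\Nu$. By hypothesis it is trivial, so $\ov\Nu = \Nu$, i.e., $\Nu$ is complete.

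\medskip

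The only nontrivial obstacle is the Bieberbach fact used for $(1) \Rightarrow (2)$; everything else is a direct appeal to Lemma 1, Theorem 4, or the discussion immediately preceding Theorem 3.
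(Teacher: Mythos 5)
Your proof is correct, and its overall structure (a cyclic chain of implications, with $(2)\Rightarrow(3)$ via the finiteness of $\ov\Nu/\Nu$ and $(3)\Rightarrow(1)$ via Lemma 1 plus the compactness of the base orbifold from Theorem 4) matches the paper's. The one place you genuinely diverge is the justification of $(1)\Rightarrow(2)$, i.e., the fact that a space group has no nontrivial finite normal subgroup. The paper deduces this from Theorem 2: a normal subgroup $F$ of a space group acts effectively as a space group of isometries of ${\rm Span}(F)+x$, and a finite $F$ has ${\rm Span}(F)=\{0\}$, so $F$ acts effectively on a point and is trivial. You instead give a direct commutator computation: for $f=c+f'\in F$ and $t=a+I$ in the translation lattice, $tft^{-1}f^{-1}=(a-f'(a))+I$ lies in $F\cap{\rm T}=\{1\}$, forcing $f'=I$ and then $f=1$; this is a correct, self-contained argument relying only on Bieberbach's theorem (that the translation lattice is torsion-free and spans $E^n$), whereas the paper's version leans on the machinery of Theorem 2 already developed. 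Both are sound; yours is more elementary at the cost of a computation, the paper's is a one-line corollary of earlier work.
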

\begin{proof} 
By Theorem 2 every normal subgroup of a space 
group is a space group.  
Hence a space group has no nontrivial finite normal subgroups. 
Therefore (1) implies (2). 
As $\ov{\rm N}/{\rm N}$ is a finite normal subgroup of $\Gamma/{\rm N}$, 
we have that (2) implies (3). 
Finally (3) implies (1) by Theorem 4. 
\end{proof}

If $\Gamma$ is a group, 
let $Z(\Gamma)$ be the {\it center} of $\Gamma$. 
If $\Gamma$ is a finitely generated group, 
let $\beta_1$ be the {\it first Betti} number of $\Gamma$. 
Here $\Gamma/[\Gamma,\Gamma] \cong G\oplus \integers^{\beta_1}$ 
with $G$ a finite abelian group. 
The next theorem strengthens Theorem 6 of Farkas \cite{Farkas}.

\begin{theorem} 
If $\Gamma$ is a space group,    
then every element of $Z(\Gamma)$ is a translation, 
the rank of $Z(\Gamma)$ is $\beta_1$, 
and $Z(\Gamma)$ is a complete normal subgroup of $\Gamma$.  
\end{theorem}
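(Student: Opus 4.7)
The first claim is a quick consequence of Bieberbach's first theorem. Suppose $a + A \in Z(\Gamma)$; commuting $a + A$ with each translation $b + I \in {\rm T}$ forces $Ab = b$, and since the vectors $b$ with $b+I \in {\rm T}$ span $E^n$ by Theorem 1, $A = I$. Hence $Z(\Gamma) \subseteq {\rm T}$. Writing conjugation by $c + C \in \Gamma$ as $(c+C)(a+I)(c+C)^{-1} = Ca + I$, I identify $Z(\Gamma)$ under $a+I\leftrightarrow a$ with the sublattice $\Lambda^{\Pi}$ of $\Pi$-fixed vectors in $\Lambda = \{a : a + I \in {\rm T}\}$, where $\Pi$ is the point group.

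For the rank assertion, the plan is to apply the five-term Hochschild--Serre exact sequence to the extension $1 \to {\rm T} \to \Gamma \to \Pi \to 1$:
$$H_2(\Pi) \longrightarrow {\rm T}/[\Gamma,{\rm T}] \longrightarrow \Gamma/[\Gamma,\Gamma] \longrightarrow \Pi/[\Pi,\Pi] \longrightarrow 0.$$
The flanking groups are finite because $\Pi$ is finite, so the two middle groups have the same rank, namely $\beta_1$. For a finite group acting on a free abelian group, semisimplicity of the $\rationalnos[\Pi]$-module $\Lambda\otimes\rationalnos$ yields ${\rm rank}(\Lambda^\Pi) = {\rm rank}(\Lambda_\Pi)$, both equal to $\dim_{\rationalnos}(\Lambda\otimes\rationalnos)^\Pi$. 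Combining these two equalities gives $\beta_1 = {\rm rank}(Z(\Gamma))$.

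For completeness, first show $V := {\rm Span}(Z(\Gamma))$ equals $W := {\rm Fix}(\Pi) = \bigcap_{B \in \Pi}{\rm Fix}(B)$. The inclusion $V \subseteq W$ is immediate from Paragraph 1. Conversely, the averaging projection $p = |\Pi|^{-1}\sum_{B \in \Pi} B$ has image $W$, and $|\Pi|\cdot p(a) = \sum_B Ba \in \Lambda \cap W = \Lambda^\Pi$ for $a \in \Lambda$; since $\Lambda$ spans $E^n$ by Theorem 1, $p(\Lambda)$ spans $W$, so $V = W$. Now take $b + B \in \ov{Z(\Gamma)}$: by the definition of $\ov{Z(\Gamma)}$ one has $b \in V = W$ and $W^\perp \subseteq {\rm Fix}(B)$. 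Since $W$ is pointwise fixed by every element of $\Pi$, in particular by $B$, it follows that $B$ fixes both $W$ and $W^\perp$, hence $B = I$. Then $b + B = b + I$ with $b \in W$, and $(c+C)(b+I)(c+C)^{-1} = Cb + I = b + I$ shows $b + I \in Z(\Gamma)$. Thus $\ov{Z(\Gamma)} = Z(\Gamma)$.

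The one nonelementary step is the rank identification in Paragraph 2: extracting ${\rm rank}(\Gamma/[\Gamma,\Gamma]) = {\rm rank}({\rm T}_\Pi)$ from the five-term sequence and matching it to ${\rm rank}({\rm T}^\Pi)$ via the averaging isomorphism $\Lambda^\Pi\otimes\rationalnos \cong \Lambda_\Pi\otimes\rationalnos$. All remaining steps reduce to Bieberbach's first theorem together with the orthogonal decomposition $E^n = W \oplus W^\perp$.
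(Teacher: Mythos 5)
Your proof is correct and follows essentially the same route as the paper: the rank statement comes from the five-term exact sequence of the extension $1 \to {\rm T} \to \Gamma \to \Pi \to 1$ together with the finiteness of the terms attached to the finite point group $\Pi$, and completeness follows from ${\rm Span}(Z(\Gamma)) \subseteq {\rm Fix}(\Pi)$ forcing any $b+B \in \ov{Z(\Gamma)}$ to satisfy $B = I$. The only cosmetic difference is that you use the homological five-term sequence and match ${\rm rank}(\Lambda_\Pi)$ with ${\rm rank}(\Lambda^\Pi)$ by averaging over $\Pi$, whereas the paper uses the cohomological sequence and identifies ${\rm rank}\big(H^1({\rm T},\integers)^\Pi\big)$ with ${\rm rank}({\rm T}^\Pi)$; these are dual formulations of the same argument.
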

\begin{proof}
The translations in $\Gamma$ are characterized 
as the elements of $\Gamma$ with only finitely many conjugates. 
Hence $Z(\Gamma)$ is a subgroup of the group $\Tau$ 
of translations of $\Gamma$. 

Let $\Pi$ be the point group of $\Gamma$. 
If $a+I\in \Tau$ and $b+B\in\Gamma$, then 
$$(b+B)(a+I)(b+B)^{-1} = Ba+I.$$
Hence conjugation in $\Gamma$ induces an action of $\Pi$ on $\Tau$. 
The group extension 
$$ 1 \to \Tau \longrightarrow \Gamma \longrightarrow \Pi \to 1$$
determines an exact sequence of cohomology groups 
$$H^1(\Pi,\integers) \to H^1(\Gamma, \integers)\to H^1(\Tau,\integers)^\Pi\to H^2(\Pi,\integers).$$
Here $\Tau, \Gamma$ and $\Pi$ act trivially on $\integers$ 
and $H^1(\Tau,\integers)^\Pi$ is the subgroup of $H^1(\Tau,\integers)$ 
of elements fixed under the induced action of $\Pi$. 
By the universal coefficients theorem, 
$H^1(\Pi,\integers)\cong {\rm Hom}(H_1(\Pi),\integers) = 0$ 
and $\beta_1 ={\it rank}(H^1(\Gamma,\integers))$. 
By Corollary 5.5 in Chapter IV of Mac Lane \cite{Mac}, we have  
$H^2(\Pi,\integers) \cong {\rm Hom}(\Pi,S^1)$, and so $H^2(\Pi,\integers)$ is finite. 
Hence $\beta_1 = {\it rank}(H^1(\Tau,\integers)^\Pi)$. 
By the universal coefficients theorem, 
$$H^1(\Tau,\integers)^\Pi \cong {\rm Hom}(H_1(\Tau),\integers)^\Pi \cong \Tau^\Pi = Z(\Gamma).$$
Thus ${\it rank}(Z(\Gamma)) = \beta_1$. 

Let $V={\rm Span}(Z(\Gamma))$. 
If $a+I\in Z(\Gamma)$ and $b+B\in\Gamma$, then 
$$Ba+I =(b+B)(a+I)(b+B)^{-1} = a+I.$$
Hence $V\subseteq {\rm Fix}(B)$. 
Therefore $\ov{Z(\Gamma)}=\{a+I \in \Gamma: a \in V\} = Z(\Gamma)$. 
\end{proof}

Let $Z(\Gamma)$ be the center of an $n$-dimensional space group $\Gamma$. 
Then every element of $Z(\Gamma)$ is a translation, 
the rank of $Z(\Gamma)$ is the first Betti number $\beta_1$ of $\Gamma$, 
and $Z(\Gamma)$ is a complete normal subgroup of $\Gamma$ by Theorem 6. 
Let $V={\rm Span}(Z(\Gamma))$. 
By Theorem 4, the flat orbifold $E^n/\Gamma$ geometrically fibers 
over the flat orbifold $(E^n/V)/(\Gamma/Z(\Gamma))$ 
with generic fiber the flat $\beta_1$-torus $V/Z(\Gamma)$. 

Suppose $x\in V^\perp$ and $b+B\in\Gamma_{V+x}$. 
Write $b=c+d$ with $c\in V$ and $d\in V^\perp$. 
Then $(b+B)(V+x) = V+d+Bx$, and so $d+Bx = x$. 
If $v\in V$, then 
$$(b+B)(v+x) = b+v+Bx = c+v+x.$$
Thus $\Gamma_{V+x}$ acts as a discrete group 
of translations on $V+x$. 
As $\Gamma_{V+x}$ contains $Z(\Gamma)$, 
we have that $(V+x)/\Gamma_{V+x}$ is a $\beta_1$-torus. 
Thus all the fibers of the fibration projection 
$\eta: E^n/\Gamma\to (E^n/V)/(\Gamma/Z(\Gamma))$ are $\beta_1$-tori,

The $\beta_1$-torus $V/Z(\Gamma)$, as an additive group, 
acts effectively on $E^n/\Gamma$ by
$$(Z(\Gamma) v)(\Gamma x) = \Gamma(v+x).$$
The projection from $(V+x)/Z(\Gamma)$ to $(V+x)/\Gamma_{V+x}$ 
is a covering projection for each $x\in V^\perp$.  
Therefore the action of $V/Z(\Gamma)$ on $E^n/\Gamma$ is an injective toral action 
in the sense of Conner and Raymond \cite{C-R}.

\section{Equivalence of Geometric Fibrations}  

Let $M$ be a flat $n$-orbifold. 
Suppose $M$ geometrically fibers over a flat $m$-orbifold $B_i$ 
with generic fiber a flat $(n-m)$-orbifold $F_i$
and fibration projection $\eta_i:M\to B_i$ for $i=1,2$.  
Then the fibration projections $\eta_1$ and $\eta_2$ are said to be 
{\it geometrically equivalent} if there is an isometry $\beta:B_1\to B_2$ 
such that $\beta\eta_1=\eta_2$. 

\begin{theorem} 
Let $\Gamma$ be an $n$-dimensional space group. 
Suppose that the flat orbifold $E^n/\Gamma$ geometrically fibers over 
a flat $m$-orbifold $B$ with generic fiber a connected flat $(n-m)$-orbifold $F$ 
and fibration projection $\eta:E^n/\Gamma\to B$. 
Then $\Gamma$ has a complete normal subgroup ${\rm N}$ such that 
$\eta$ is geometrically equivalent to the fibration projection determined by ${\rm N}$.  
\end{theorem}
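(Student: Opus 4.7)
My plan is to recover a subspace $V\subseteq E^n$ from a single fiber of $\eta$, let $\Nu$ be the kernel of the induced $\Gamma$-action on $E^n/V$, and then invoke Theorem 4 to identify $\eta$ with the fibration projection determined by $\Nu$.

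First, I would choose a regular point $y_0\in B$ (so $G_{y_0}=\{1\}$) and pick a non-singular lift $\tilde y_0\in E^n$ of a point of $\eta^{-1}(y_0)$. Let $\pi:E^n\to E^n/\Gamma$ be the quotient map, and let $\tilde F_0$ be the connected component of $(\eta\circ\pi)^{-1}(y_0)$ through $\tilde y_0$. The generic fiber $F=\eta^{-1}(y_0)$ is a connected, complete, flat $(n-m)$-orbifold that is totally geodesic in $E^n/\Gamma$ (as the $F$-factor of the local product model $(F\times B_{y_0})/G_{y_0}$). Its lift $\tilde F_0$ is therefore a connected, complete, totally geodesic, flat submanifold of $E^n$, hence an affine $(n-m)$-plane; write $\tilde F_0=\tilde y_0+V$ where $V\subseteq E^n$ is its tangent subspace.

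The key technical step is to show that every fiber of $\eta\circ\pi:E^n\to B$ decomposes into connected components parallel to $V$. The local model $(F\times B_y)/G_y\to B_y/G_y$ of $\eta$ near each $y\in B$ lifts through $\pi$ to an isometric product neighborhood of any preimage in $E^n$, inside which $\eta\circ\pi$ is orthogonal projection onto an $m$-dimensional factor. Hence the assignment $\tilde y\mapsto V_{\tilde y}$ sending $\tilde y$ to the tangent direction of the fiber-component through it is well defined, continuous, and locally constant. Since $E^n$ is connected, $V_{\tilde y}=V$ for every $\tilde y$.

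Given this parallelism, each $\gamma=a+A\in\Gamma$ sends $\tilde y+V$ to $\gamma(\tilde y)+AV$, which must again be a fiber-component, forcing $AV=V$. Thus $\Gamma$ acts isometrically on $E^n/V$; I define $\Nu$ to be the kernel of this action. Then $\Nu$ is a normal subgroup of $\Gamma$ coinciding with $\{a+A\in\Gamma:a\in V,\ V^\perp\subseteq{\rm Fix}(A)\}$, so $\Nu=\ov\Nu$ is complete. Because $F$ is compact, the stabilizer $\Gamma_{\tilde F_0}$ acts as a space group on $\tilde F_0$, and its translation subgroup has rank $n-m$ and lies in $\Nu\cap{\rm T}$; this gives ${\rm Span}(\Nu)=V$.

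By Theorem 4 applied to the complete normal subgroup $\Nu$, we obtain the fibration projection $\eta_\Nu:E^n/\Gamma\to(E^n/V)/(\Gamma/\Nu)$. The parallelism implies that the fibers of $\eta$ and of $\eta_\Nu$ agree as subsets of $E^n/\Gamma$, both equal to the images $\pi(V+x)$ of the cosets of $V$. Consequently $\eta$ factors as $\eta=\beta\circ\eta_\Nu$ for a continuous bijection $\beta$ between two compact flat $m$-orbifolds. Matching the local product charts of the two fibrations at each base point shows $\beta$ is an isometry, which is the required geometric equivalence.

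The main obstacle will be the parallelism argument. The delicate part is verifying the claimed product structure after lifting the orbifold model $(F\times B_y)/G_y\to B_y/G_y$ at singular fibers of $\eta$, so that the $F$-direction in the lift is always $V$ rather than some other element of the $\Pi$-orbit of $V$. One has to show that the map $\tilde y\mapsto V_{\tilde y}$ is genuinely continuous as $\tilde y$ crosses the lifted singular locus, which is what permits the connectedness argument on all of $E^n$.
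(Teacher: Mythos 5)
Your proposal follows essentially the same route as the paper's proof: both use the local product model to see that the fibers are connected, totally geodesic, and parallel, extract the $\Gamma$-invariant subspace $V$ from a generic fiber, take ${\rm N}$ to be the subgroup $\{a+A\in\Gamma: a\in V,\ V^\perp\subseteq{\rm Fix}(A)\}$ (your kernel of the $E^n/V$-action coincides with the paper's stabilizer $\Gamma_V$ of the generic fiber plane), deduce completeness and ${\rm Span}({\rm N})=V$ from compactness of $F$ via Bieberbach, and finally match fibers to produce the base isometry $\beta$. The only cosmetic looseness is that the translations of the space group $\Gamma_{\tilde F_0}$ acting on the plane $\tilde F_0$ need not a priori be translations of $E^n$, but the genuine translations form a finite-index subgroup, so your span computation goes through.
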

\begin{proof}
The fibration projection $\eta$ is locally isometrically equivalent to 
a natural projection $(F\times D)/G\to D/G$, 
where $G$ is a finite group acting diagonally on $F\times D$, isometrically on $F$, 
and effectively and orthogonally on an open $m$-disk $D$. 
The fibers of the projection $(F\times D)/G\to D/G$ are connected, totally geodesic, and parallel, 
and so the fibers of $\eta$ are connected, totally geodesic, and parallel. 
Hence there is a vector subspace $V$ of $E^n$ such that 
each coset of $V$ in $E^n$ projects onto a fiber of $\eta$ 
and $\Gamma$ maps each coset of $V$ to a coset of $V$. 
Let $b+B\in\Gamma$. Then $(b+B)V = b+BV$, and so $BV=V$.

Let $V+x$ project to a generic fiber $F$ of $\eta$, 
that is, to a fiber of $\eta$ with $G = 1$. 
By conjugating $\Gamma$ by $-x+I$, we may assume that $x=0$. 
Now $F = \Gamma V/\Gamma$ which is isomorphic to $V/\Gamma_V$ 
where $\Gamma_V = \{\gamma\in\Gamma: \gamma(V) = V\}$. 
As $\eta$ is isometrically equivalent to the projection $F\times D \to D$ 
in a tubular neighborhood of the fiber $F$, we deduce that  
$$\Gamma_V = \{a+A\in \Gamma: a\in V\ \hbox{and}\ V^\perp\subseteq{\rm Fix}(A)\}.$$

We claim that $\Gamma_V$ is a normal subgroup of $\Gamma$. 
Let $b+B\in\Gamma$ and $a+A\in \Gamma_V$. 
Then we have 
$$(b+B)(a+A)(b+B)^{-1} = b+Ba-BAB^{-1}b+BAB^{-1}.$$
If $x\in V^\perp$, then $B^{-1}x\in V^\perp$, and so $BAB^{-1}x = x$. 
Therefore $V^\perp \subseteq {\rm Fix}(BAB^{-1})$. 
Write $b=c+d$ with $c\in V$ and $d\in V^\perp$. 
Then we have 
\begin{eqnarray*}
b+Ba-BAB^{-1}b & = & b+Ba-BAB^{-1}c -BAB^{-1}d \\
& = & b+Ba-BAB^{-1}c -d \\
& = & c+Ba-BAB^{-1}c 
\end{eqnarray*}
which is an element of $V$. 
Therefore $(b+B)(a+A)(b+B)^{-1}\in \Gamma_V$. 
Thus $\Gamma_V$ is a normal subgroup of $\Gamma$. 

Now $F= V/\Gamma_V$ is compact, and so $\Gamma_V$ acts 
as a space group of isometries of $V$. 
Therefore $V = {\rm Span}(\Gamma_V)$ by Part (1) 
of Theorem 1. 
Hence $\ov\Gamma_V=\Gamma_V$. 

The fibration projection $\eta$ and the fibration projection $\eta_V$ determined by $\Gamma_V$ 
have the same fibers. 
Hence there is a homeomorphism $\beta:B\to (E^n/V)/(\Gamma/\Gamma_V)$ 
such that $\beta\eta = \eta_V$. 
The map $\phi$ is an isometry, since the metrics on $B$ and $(E^n/V)/(\Gamma/\Gamma_V)$ 
are determined by the distance between fibers in $E^n/\Gamma$. 
Therefore $\eta$ is geometrically equivalent to $\eta_V$. 
\end{proof}

Let $M_i$ be a connected, complete, flat $n$-orbifold for $i=1,2$. 
Suppose $M_i$ geometrically fibers over a flat $m$-orbifold $B_i$ 
with generic fiber a flat $(n-m)$-orbifold $F_i$ and fibration projection 
$\eta_i:M_i\to B_i$ for $i=1,2$. 
The fibration projections $\eta_1$ and $\eta_2$ are said to be 
{\it isometrically equivalent} if there are isometries $\alpha:M_1\to M_2$ 
and $\beta:B_1\to B_2$ such that $\beta\eta_1=\eta_2\alpha$.

\begin{theorem} 
Let $M$ be a compact, connected, flat $n$-orbifold. 
If $M$ geometrically fibers over a flat $m$-orbifold $B$ 
with generic fiber a flat $(n-m)$-orbifold $F$ and fibration projection $\eta:M\to B$,  
then there exists an $n$-dimensional space group $\Gamma$ 
and a complete normal subgroup $\Nu$ of $\Gamma$   
such that $\eta$ is isometrically equivalent to the fibration 
projection $\eta_V: E^n/\Gamma\to (E^n/V)/(\Gamma/\Nu)$ determined by $\Nu$. 
\end{theorem}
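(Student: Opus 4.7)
The plan is to reduce Theorem 8 to Theorem 7 by first realizing the abstract flat orbifold $M$ as an orbit space $E^n/\Gamma$, and then transporting the fibration projection along the realization isometry.

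First, I would invoke Theorem 13.3.10 of \cite{R}, quoted in the Introduction: since $M$ is a connected, complete (being compact), flat $n$-orbifold, there exists a discrete group $\Gamma$ of isometries of $E^n$ together with an isometry $\alpha: M \to E^n/\Gamma$. Compactness of $M$ forces $E^n/\Gamma$ to be compact, so $\Gamma$ is an $n$-dimensional space group, as required by the conclusion of the theorem.

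Next, I would transfer the fibration through $\alpha$. Set $\eta' := \eta \circ \alpha^{-1}: E^n/\Gamma \to B$. Because $\alpha$ is a global isometry, every local model $(F\times B_y)/G_y \to B_y/G_y$ witnessing that $\eta$ is a geometric fibration pulls back along $\alpha^{-1}$ to an identical local model for $\eta'$. Consequently $\eta'$ is a fibration projection of a geometric fibration of $E^n/\Gamma$ over $B$ with generic fiber $F$.

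I would then apply Theorem 7 to $\eta'$. It produces a complete normal subgroup $\Nu$ of $\Gamma$, with $V={\rm Span}(\Nu)$, and an isometry $\beta: B \to (E^n/V)/(\Gamma/\Nu)$ such that $\beta\eta' = \eta_V$. Composing with the earlier identification gives $\beta\eta = \beta\eta'\alpha = \eta_V\alpha$, so the pair of isometries $(\alpha,\beta)$ exhibits $\eta$ as isometrically equivalent to the fibration projection $\eta_V$ determined by $\Nu$.

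There is no serious obstacle: the only point requiring any verification is that the structure of a geometric fibration is invariant under isometry of the total space, and this is immediate from the definition since the local model data depends only on the metric. All the substantive work is already packaged in Theorems 4 and 7; Theorem 8 is essentially a routine upgrade of Theorem 7 from geometric equivalence to isometric equivalence, made possible by the abstract-to-concrete realization of flat orbifolds.
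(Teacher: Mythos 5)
Your proposal is correct and follows exactly the same route as the paper's proof: realize $M$ as $E^n/\Gamma$ via Theorem 13.3.10 of \cite{R}, observe that $\eta\alpha^{-1}$ is a geometric fibration projection of $E^n/\Gamma$, apply Theorem 7 to get $\Nu$ and $\beta$ with $\beta(\eta\alpha^{-1})=\eta_V$, and conclude $\beta\eta=\eta_V\alpha$. No differences worth noting.
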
 
\begin{proof}
There exists an $n$-dimensional space group $\Gamma$ 
and an isometry $\alpha: M \to E^n/\Gamma$ by Theorem 13.3.10 of Ratcliffe\cite{R}. 
The map $\eta\alpha^{-1}: E^n/\Gamma \to B$ is a geometric fibration projection. 
There exists a complete normal subgroup $\Nu$ of $\Gamma$ and an isometry 
$\beta: B\to (E^n/V)/(\Gamma/\Nu)$ such that $\beta(\eta\alpha^{-1}) = \eta_V$ by Theorem 7. 
Hence $\beta\eta=\eta_V\alpha$, and so $\eta$ is isometrically equivalent to $\eta_V$. 
\end{proof}

\begin{theorem} 
Let $\Nu_i$ be a complete normal subgroup of an $n$-dimensional 
crystal\-lographic group $\Gamma_i$ for $i=1,2$, 
and let $\eta_i:E^n/\Gamma_i \to (E^n/V_i)/(\Gamma_i/\Nu_i)$ 
be the fibration projections determined by $\Nu_i$ for $i=1,2$. 
Then $\eta_1$ and $\eta_2$ are isometrically equivalent if and only if 
there is an isometry $\xi$ of $E^n$ such that $\xi\Gamma_1\xi^{-1} = \Gamma_2$ 
and $\xi\Nu_1\xi^{-1} = \Nu_2$. 
\end{theorem}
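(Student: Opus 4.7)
The plan is to handle the two directions of the biconditional separately, in each case exploiting the intrinsic characterization
$$\Nu_i=\{a+A\in\Gamma_i:a\in V_i\ \hbox{and}\ V_i^\perp\subseteq{\rm Fix}(A)\},$$
which holds because each $\Nu_i$ is complete (i.e.\ $\Nu_i=\ov{\Nu_i}$).

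For the $(\Leftarrow)$ direction, write $\xi=c+C$ with $c\in E^n$ and $C\in{\rm O}(n)$. For each translation $a+I\in\Nu_1$ a short calculation gives $\xi(a+I)\xi^{-1}=Ca+I\in\Nu_2$, so $CV_1=V_2$ and $\xi$ carries each coset $V_1+x$ onto $V_2+\xi(x)$. The formulas $\Gamma_1 x\mapsto\Gamma_2\xi(x)$ and $V_1+x\mapsto V_2+\xi(x)$ then define isometries $\alpha:E^n/\Gamma_1\to E^n/\Gamma_2$ and $\bar\xi:E^n/V_1\to E^n/V_2$ respectively, using $\xi\Gamma_1\xi^{-1}=\Gamma_2$ and $CV_1=V_2$. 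Since $\xi\Nu_1\xi^{-1}=\Nu_2$, the map $\bar\xi$ intertwines the actions of $\Gamma_1/\Nu_1$ and $\Gamma_2/\Nu_2$ on the respective quotients, hence descends to an isometry $\beta$ of base orbifolds, and the identity $\beta\eta_1=\eta_2\alpha$ is immediate from the definitions.

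For the $(\Rightarrow)$ direction, use that $E^n\to E^n/\Gamma_i$ is a universal orbifold covering to lift $\alpha$ to an isometry $\xi=c+C$ of $E^n$ with $\xi\Gamma_1\xi^{-1}=\Gamma_2$. From the construction of $\eta_i$ in the proof of Theorem 4, the preimage in $E^n$ of a fiber of $\eta_i$ is a union of cosets of $V_i$, and by Lemma 1 the $\Gamma_i/\Nu_i$-action on $E^n/V_i$ is discrete. Combined with $\beta\eta_1=\eta_2\alpha$, a continuity argument along the connected coset $V_1+x$ forces $\xi$ to send each coset of $V_1$ into a single coset of $V_2$; since $\xi$ is affine and the cosets have equal dimension, $CV_1=V_2$, and hence $CV_1^\perp=V_2^\perp$. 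To conclude, take $a+A\in\Nu_1$ and compute $\xi(a+A)\xi^{-1}=b+B$ with $B=CAC^{-1}$ and $b=Ca+(I-B)c$. Orthogonality of $C$ and $V_1^\perp\subseteq{\rm Fix}(A)$ give $V_2^\perp\subseteq{\rm Fix}(B)$. Writing $c=c_1+c_2$ with $c_1\in V_2$ and $c_2\in V_2^\perp$, one has $Bc_2=c_2$ and $Bc_1\in BV_2=CAV_1=V_2$, so $(I-B)c\in V_2$; combined with $Ca\in CV_1=V_2$ this yields $b\in V_2$. The intrinsic characterization now gives $\xi(a+A)\xi^{-1}\in\Nu_2$, and applying the same argument to $\xi^{-1}$ yields equality.

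The main obstacle is the geometric step in the $(\Rightarrow)$ direction: lifting $\alpha$ to an isometry $\xi$ of $E^n$ conjugating $\Gamma_1$ to $\Gamma_2$ is standard, but one must then argue, by combining the commutativity with $\beta$ and the discreteness of Lemma 1, that the orthogonal part $C$ of $\xi$ sends $V_1$ onto $V_2$. Once $CV_1=V_2$ is in hand, the identification of $\Nu_i$ as the set of elements of $\Gamma_i$ defined intrinsically in terms of $V_i$ reduces the remaining assertion to the algebraic identity $\xi(a+A)\xi^{-1}=\bigl(Ca+(I-CAC^{-1})c\bigr)+CAC^{-1}$.
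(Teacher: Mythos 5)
Your proof is correct. The converse direction is essentially the paper's argument (the paper gets $CV_1=V_2$ from $\xi\Tau_1\xi^{-1}=\Tau_2$ via uniqueness of the maximal abelian normal subgroup, while you conjugate the translations of $\Nu_1$ directly; both then build $\ov\xi$ and $\beta$ the same way). In the forward direction you diverge from the paper after the common lifting step. The paper argues that $\alpha$ carries a \emph{generic} fiber of $\eta_1$ to a generic fiber of $\eta_2$, so that $\xi$ carries the coset $V_1+x$ lying over an ordinary base point to the corresponding coset $V_2+\xi(x)$, and then identifies $\Nu_i$ as the stabilizer in $\Gamma_i$ of such a coset (for a generic fiber $G_x=\Gamma_{V+x}/\Nu$ is trivial, so the stabilizer is exactly $\Nu$); conjugation of stabilizers finishes the proof. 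You instead use only that $\xi$ carries cosets of $V_1$ to cosets of $V_2$ (for an arbitrary fiber, via the discreteness from Lemma 1), and then replace the stabilizer identification by the algebraic characterization $\Nu_i=\ov{\Nu_i}=\{a+A\in\Gamma_i:\,a\in V_i,\ V_i^\perp\subseteq{\rm Fix}(A)\}$ together with the explicit computation $\xi(a+A)\xi^{-1}=\bigl(Ca+(I-CAC^{-1})c\bigr)+CAC^{-1}$. What your route buys is that it avoids invoking the (true but unproved in the paper) fact that an orbifold isometry of the base sends ordinary points to ordinary points, reducing everything after $CV_1=V_2$ to Theorem 2 and completeness; the cost is a longer computation where the paper has a one-line conjugation-of-stabilizers argument. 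Two small points you should make explicit: $\dim V_1=\dim V_2$ (needed to upgrade $\xi(V_1+x)\subseteq V_2+z$ to equality) follows because $\beta$ is an isometry of the base orbifolds, and in the converse direction the equality $CV_1=V_2$ requires also applying your translation argument to $\xi^{-1}$.
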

\begin{proof}
Suppose  $\eta_1$ and $\eta_2$ are isometrically equivalent. 
Then there exists isometries $\alpha:E^n/\Gamma_1\to E^n/\Gamma_2$ 
and $\beta: (E^n/V_1)/(\Gamma/\Nu_1)\to(E^n/V_2)/(\Gamma/\Nu_2)$ 
such that $\beta\eta_1=\eta_2\alpha$. 
The isometry $\alpha$ lifts to an isometry $\xi$ of $E^n$ 
such that $\alpha(\Gamma_1 x) = \Gamma_2\xi(x)$ for each $x\in E^n$ 
by Theorem 13.2.6 of Ratcliffe \cite{R}. 
Therefore $\xi\Gamma_1\xi^{-1}=\Gamma_2$. 
As $\beta\eta_1=\eta_2\alpha$, the isometry $\alpha$ maps 
a generic fiber of $\eta_1$ onto a generic fiber of $\eta_2$. 
Let $V_1+x$ be a coset of $V_1$ that projects to a generic fiber of $\eta_1$. 
Then $\xi$ maps $V_1+x$ onto $V_2+\xi(x)$ 
and $V_2+\xi(x)$ projects to a generic fiber of $\eta_2$. 
Hence $\xi$ conjugates the stabilizer of $V_1+x$ in $\Gamma_1$ 
to the stabilizer of $V_2+\xi(x)$ in $\Gamma_2$. 
Therefore $\xi\Nu_1\xi^{-1} = \Nu_2$. 

Conversely, suppose $\xi$ is an isometry of $E^n$ 
such that $\xi\Gamma_1\xi^{-1} = \Gamma_2$ 
and $\xi\Nu_1\xi^{-1} = \Nu_2$. 
Then $\xi$ induces an isometry $\alpha: E^n/\Gamma_1 \to E^n/\Gamma_2$ 
defined by $\alpha(\Gamma_1x) = \Gamma_2\xi(x)$ for each $x\in E^n$. 
Let $\Tau_i$ be the group of translations of $\Gamma_i$ for $i=1,2$. 
Then $\xi\Tau_1\xi^{-1} = \Tau_2$, since $\Tau_i$ is the 
unique maximal abelian normal subgroup of $\Gamma_i$ for $i=1,2$.  
Hence $\xi(\Nu_1\cap\Tau_1)\xi^{-1} = \Nu_2\cap\Tau_2$,  
and so if $\xi =b+B$, then $BV_1 = V_2$. 
Therefore $\xi$ maps each coset of $V_1$ in $E^n$ onto a coset of $V_2$ in $E^n$.  
Hence $\xi$ induces an isometry $\ov\xi: E^n/V_1\to E^n/V_2$ 
defined $\ov\xi(V_1+x) = V_2+\xi(x)$. 
If $\gamma\in \Gamma_1$ and $x\in E^n$, then we have
\begin{eqnarray*}
\ov\xi(\Nu_1\gamma(V_1+x)) & = & \ov\xi(V_1+\gamma(x)) \\
 & = & V_2+\xi\gamma(x) \\
 & = & V_2 + \xi\gamma\xi^{-1}\xi(x) \ \
  = \ \  (\Nu_2\xi\gamma\xi^{-1})\ov\xi(V_1+x).
 \end{eqnarray*}
Hence $\ov\xi(\Nu_1\gamma) = (\Nu_2\xi\gamma\xi^{-1})\ov\xi$, 
and so $\ov\xi(\Nu_1\gamma)\ov\xi^{-1}=\Nu_2\xi\gamma\xi^{-1}$. 
Therefore we have $\ov\xi(\Gamma_1/\Nu_1)\ov\xi\hbox{}^{-1} = \Gamma_2/\Nu_2$. 
Hence $\ov\xi$ induces an isometry
$$\beta: (E^n/V_1)/(\Gamma_1/\Nu_1)\to(E^n/V_2)/(\Gamma_2/\Nu_2)$$
defined by 
$$\beta((\Gamma_1/\Nu_1)(V_1+x)) = (\Gamma_2/\Nu_2)\ov\xi(V_1+x) = (\Gamma_2/\Nu_2)(V_2+\xi(x)).$$
If $x\in E^n$, we have 
\begin{eqnarray*}
\beta\eta_1(\Gamma_1x) & = & \beta((\Gamma_1/\Nu_1)(V_1+x)) \\
 & = & (\Gamma_2/\Nu_2)(V_2+\xi(x)) \\
 & = & \eta_2(\Gamma_2\xi(x)) \ \ = \ \ \eta_2\alpha(\Gamma_1x).
\end{eqnarray*}
Therefore $\beta\eta_1=\eta_2\alpha$. 
Thus $\eta_1$ and $\eta_2$ are isometrically equivalent. 
\end{proof}

Let $M$ be a connected, complete flat $n$-orbifold. 
A {\it universal orbifold covering projection} is 
a geometric fibration projection $\pi: \tilde M \to M$  
that is isometrically equivalent to the natural projection $\pi_\Gamma:E^n\to E^n/\Gamma$ 
for some discrete group $\Gamma$ of isometries of $E^n$. 
There exists a universal orbifold covering projection $\pi: \tilde M \to M$ 
by Theorem 13.3.10 of \cite{R}, and
any two universal orbifold covering projections $\pi_i:\tilde M_i \to M$, $i=1,2$, 
are isometrically equivalent by Theorem 13.2.6 of \cite{R}.

Let $\pi_i:\tilde M_i\to M_i$ be universal orbifold covering projections for $i=1,2$. 
A homeomorphism $\alpha: M_1\to M_2$ is said to be {\it affine} 
if there is an affine homeomorphism $\tilde\alpha: \tilde M_1 \to \tilde M_2$ 
such that $\alpha\pi_1=\pi_2\tilde\alpha$. 
Note that $\alpha: M_1\to M_2$ being affine does not depend 
on the choice of the universal orbifold covering projections $\pi_i:\tilde M_i\to M_i$.

The fibration projections $\eta_1$ and $\eta_2$ are said to be 
{\it affinely equivalent} if there are affine homeomorphisms $\alpha:M_1\to M_2$ 
and $\beta:B_1\to B_2$ such that $\beta\eta_1=\eta_2\alpha$. 

\begin{theorem} 
Let $\Nu_i$ be a complete normal subgroup of an $n$-dimensional 
space group $\Gamma_i$ for $i=1,2$, 
and let $\eta_i:E^n/\Gamma_i \to (E^n/V_i)/(\Gamma_i/\Nu_i)$ 
be the fibration projections determined by $\Nu_i$ for $i=1,2$. 
Then the following are equivalent:
\begin{enumerate}
\item The fibration projections $\eta_1$ and $\eta_2$ are affinely equivalent. 
\item There is an affine homeomorphism $\phi$ of $E^n$ such that 
$\phi\Gamma_1\phi^{-1} = \Gamma_2$ and $\phi\Nu_1\phi^{-1} = \Nu_2$. 
\item There is an isomorphism $\psi:\Gamma_1\to \Gamma_2$ such that $\psi(\Nu_1) = \Nu_2$.
\end{enumerate}
\end{theorem}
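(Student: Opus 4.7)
The plan is to prove $(1)\Rightarrow(2)\Rightarrow(3)\Rightarrow(2)\Rightarrow(1)$, which reduces the theorem to three pieces of work: a lifting argument (for $(1)\Rightarrow(2)$), an appeal to Bieberbach's second theorem (for $(3)\Rightarrow(2)$), and a descent argument (for $(2)\Rightarrow(1)$). The implication $(2)\Rightarrow(3)$ is immediate by defining $\psi(\gamma)=\phi\gamma\phi^{-1}$. The whole argument parallels that of Theorem 9, with ``isometry'' replaced by ``affine homeomorphism'' throughout; the one genuinely new ingredient is Bieberbach's Theorem 1(2), which identifies abstract isomorphisms of space groups with affine conjugations on $E^n$.

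For $(1)\Rightarrow(2)$, I would lift the affine homeomorphism $\alpha:E^n/\Gamma_1\to E^n/\Gamma_2$ through the universal orbifold covering projections to an affine homeomorphism $\phi:E^n\to E^n$ satisfying $\alpha\pi_{\Gamma_1}=\pi_{\Gamma_2}\phi$. Standard covering-space arguments then give $\phi\Gamma_1\phi^{-1}=\Gamma_2$. The relation $\beta\eta_1=\eta_2\alpha$ together with the fact that affine orbifold maps preserve the local isotropy data (hence send generic fibers to generic fibers) forces $\phi$ to carry some coset $V_1+x$ that projects to a generic fiber of $\eta_1$ onto some coset $V_2+y$ that projects to a generic fiber of $\eta_2$. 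By the analysis in the proof of Theorem 4, the $\Gamma_i$-stabilizer of such a coset is precisely $\Nu_i$ (since $G_x=\Gamma_{i,V_i+x}/\Nu_i$ is trivial at a generic fiber), so conjugation by $\phi$ sends $\Nu_1$ onto $\Nu_2$.

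The implication $(3)\Rightarrow(2)$ follows directly from Bieberbach's second theorem: the given isomorphism $\psi:\Gamma_1\to\Gamma_2$ is realized as conjugation by an affine homeomorphism $\phi$ of $E^n$, so $\phi\Nu_1\phi^{-1}=\psi(\Nu_1)=\Nu_2$ automatically. For $(2)\Rightarrow(1)$, the affine $\phi$ descends to an affine homeomorphism $\alpha:E^n/\Gamma_1\to E^n/\Gamma_2$. Writing $\phi=b+B$, the condition $\phi\Nu_1\phi^{-1}=\Nu_2$ forces $B(\Nu_1\cap\Tau_1)B^{-1}=\Nu_2\cap\Tau_2$ (since each $\Tau_i$ is the unique maximal abelian normal subgroup of $\Gamma_i$), and since the translations in $\Nu_i$ span $V_i$, this gives $BV_1=V_2$. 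Hence $\phi$ sends cosets of $V_1$ to cosets of $V_2$ and induces affine maps $\bar\phi:E^n/V_1\to E^n/V_2$ and then $\beta:(E^n/V_1)/(\Gamma_1/\Nu_1)\to(E^n/V_2)/(\Gamma_2/\Nu_2)$. The relation $\beta\eta_1=\eta_2\alpha$ is then verified by the same chain of coset identifications as at the end of the proof of Theorem 9.

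The main obstacle is the generic-fiber identification in $(1)\Rightarrow(2)$: one must argue that the orbifold-theoretic notion of a generic fiber is preserved by affine (not just isometric) equivalence, and then recognize, using the characterization extracted from the proofs of Theorems 4 and 7, that the stabilizer of a generic-fiber coset of $V_i$ inside $\Gamma_i$ is exactly the complete normal subgroup $\Nu_i$. Once that identification is in hand, the remaining implications are either routine descent computations or direct invocations of Bieberbach's theorem.
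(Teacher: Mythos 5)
Your proposal is correct and follows essentially the same route as the paper: the paper proves $(1)\Leftrightarrow(2)$ by repeating the proof of Theorem 9 with isometries replaced by affine homeomorphisms (your lifting/generic-fiber argument and descent computation are exactly that), and gets $(2)\Leftrightarrow(3)$ from the strong form of Bieberbach's second theorem (Theorem 7.5.4 of Ratcliffe), which is your $(3)\Rightarrow(2)$ step. The only difference is organizational (you chain $(1)\Rightarrow(2)\Rightarrow(3)\Rightarrow(2)\Rightarrow(1)$ and spell out the details the paper leaves to references), not mathematical.
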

\begin{proof}
The proof of the equivalence of (1) and (2) is the same as the proof of Theorem 9. 
The equivalence of (2) and (3) follows from Theorem 7.5.4 of Ratcliffe \cite{R}. 
\end{proof}

\section{Reducibility of Crystallographic Groups} 

Let ${\rm N}$ be a normal subgroup of an $n$-dimensional 
space group $\Gamma$. 
The {\it dimension} of $\Nu$, denoted by ${\rm dim}(\Nu)$,
is defined to be the dimension of $V = {\rm Span}(\Nu)$. 
Note that ${\rm dim}(\Nu)$ is equal to the virtual cohomological dimension 
of $\Nu$ by Theorem 2. 
The theory in this paper is nontrivial only if $0 < {\rm dim}(\Nu) < n$, 
and so we should discuss when such a normal subgroup ${\rm N}$ exists. 

Let ${\rm T}$ be the group of translations of $\Gamma$,  
and let $\Pi$ be the point group of $\Gamma$. 
The group ${\rm T}$ is free abelian of rank $n$ 
and $\{b: b+I\in {\rm T}\}$ spans $E^n$ by Theorem 1. 
Choose a set $\{b_1+I,\ldots,b_n+I\}$ of $n$ generators of ${\rm T}$.  
Then $\{b_1,\ldots, b_n\}$ is a basis of $E^n$. 
Let $\gamma = b+B \in \Gamma$. 
Then $\gamma(b_j+I)\gamma^{-1} = Bb_j+I\in{\rm T}$ for each $j=1,\ldots,n$. 
Hence there are integers $c_{ij}$ for $i,j=1,\ldots,n$
such that $Bb_j = \sum_{i=1}^nc_{ij}b_i$ for each $j$. 
The representation $\rho:\Pi\to {\rm GL}(n,\mathbb Z)$ 
defined by $\rho(B) = (c_{ij})$ is a monomorphism. 
The representation $\rho$ is said be {\it reducible} 
if there is an integer $k$, with $0< k < n$, such that 
every matrix in the image of $\rho$ is in the block form 
$$\left(\begin{array}{cc}
A & B \\
O & D 
\end{array}\right)$$
where $A$ is a $k\times k$ block and $O$ is a $(n-k)\times k$ block of zeros. 

The group $\Gamma$ is said to be $\mathbb Z$-{\it reducible}, 
if there is a set of $n$ generators of ${\rm T}$ 
such that the corresponding representation $\rho:\Pi\to {\rm GL}(n,\mathbb Z)$ is reducible. 

\begin{theorem} 
Let $\Gamma$ be an $n$-dimensional space group 
with translation group ${\rm T}$ and point group $\Pi$.  
Then the following are equivalent: 
\begin{enumerate}
\item The group $\Gamma$ is $\mathbb Z$-reducible. 
\item There exists a $\Pi$-invariant vector subspace $V$ of $E^n$ 
with basis $\{b_1,\ldots, b_k\}$ such that  
$0< k < n$ and $b_i+I \in {\rm T}$ for each $i=1,\ldots, k$.  
\item The group $\Gamma$ has a normal subgroup ${\rm N}$ 
of dimension $k$ with $0 < k < n$. 
\end{enumerate} 
\end{theorem}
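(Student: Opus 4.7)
The plan is to prove $(1) \Leftrightarrow (2)$ first by a change-of-basis argument on the translation lattice, then close the loop via $(2) \Rightarrow (3) \Rightarrow (2)$.

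For $(1) \Rightarrow (2)$, suppose $\{b_1+I,\ldots,b_n+I\}$ is a set of generators of $\mathrm{T}$ for which $\rho:\Pi\to\mathrm{GL}(n,\mathbb{Z})$ is in block form with a top-left $k\times k$ block. Let $V=\mathrm{Span}\{b_1,\ldots,b_k\}$. The block form says that for every $B\in\Pi$, each of $Bb_1,\ldots,Bb_k$ is an integer combination of $b_1,\ldots,b_k$, hence lies in $V$. Thus $V$ is $\Pi$-invariant and (2) holds.

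For $(2) \Rightarrow (1)$, set $\mathrm{T}_V=\{a+I\in \mathrm{T}:a\in V\}$. Since $\{b_1,\ldots,b_k\}$ is a basis of $V$ and $b_i+I\in \mathrm{T}_V$, the lattice $\mathrm{T}_V$ has rank exactly $k$, so I may replace $\{b_1+I,\ldots,b_k+I\}$ by a set of generators of $\mathrm{T}_V$ (still a basis of $V$). I next show $\mathrm{T}/\mathrm{T}_V$ is torsion-free: if $m(a+I)\in \mathrm{T}_V$ with $m>0$, then $ma\in V$, whence $a\in V$ and $a+I\in \mathrm{T}_V$. Therefore $\mathrm{T}/\mathrm{T}_V$ is free abelian of rank $n-k$, and I can lift a basis of it to $b_{k+1}+I,\ldots,b_n+I\in \mathrm{T}$, giving generators $\{b_1+I,\ldots,b_n+I\}$ of $\mathrm{T}$. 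Finally, $\Pi$-invariance of $V$ forces $\mathrm{T}_V$ to be $\Pi$-invariant (for $b+B\in\Gamma$ and $a+I\in\mathrm{T}_V$, conjugation yields $Ba+I$, and $Ba\in V$), so the matrix of $\rho(B)$ in the new basis has the required block-upper-triangular shape.

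For $(2) \Rightarrow (3)$, take $\mathrm{N}=\mathrm{T}_V$. Normality follows because $\mathrm{T}$ is normal and $V$ is $\Pi$-invariant, so conjugation by any $b+B\in\Gamma$ sends $a+I$ (with $a\in V$) to $Ba+I\in \mathrm{T}_V$. Since $\mathrm{N}$ contains $b_1+I,\ldots,b_k+I$ whose translation vectors span $V$, we have $\mathrm{Span}(\mathrm{N})=V$ and $\dim(\mathrm{N})=k$. For $(3) \Rightarrow (2)$, let $V=\mathrm{Span}(\mathrm{N})$; Theorem 2(1) gives $BV=V$ for every $b+B\in\Gamma$, hence $V$ is $\Pi$-invariant. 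By Theorem 2(3), $\mathrm{N}$ acts as a $k$-dimensional space group on the coset $V$, so Part (1) of Bieberbach's Theorem (Theorem 1) applied to $\mathrm{N}$ gives translations $a_1+I,\ldots,a_k+I$ in $\mathrm{N}\subseteq\Gamma$ whose vectors form a basis of $V$, yielding (2).

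The main obstacle is the $(2)\Rightarrow(1)$ step, where one must verify that the candidate lattice $\mathrm{T}_V$ is a direct summand of $\mathrm{T}$ so that one can extend its generators to a basis of $\mathrm{T}$ in which the representation becomes block triangular; this hinges on the torsion-freeness argument for $\mathrm{T}/\mathrm{T}_V$. The other implications reduce rapidly to previously established material, namely Theorems 1 and 2.
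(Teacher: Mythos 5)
Your proof is correct and follows essentially the same route as the paper: the key technical step is the identical torsion-freeness argument for ${\rm T}/{\rm T}_V$ (which the paper houses in the proof of Lemma 1, applied to $\ov{\Nu}\cap{\rm T}$) used to extend a basis of the $\Pi$-invariant sublattice to a basis of ${\rm T}$. The only difference is organizational: you close the equivalences as $(1)\Leftrightarrow(2)$ and $(2)\Leftrightarrow(3)$, extracting the spanning translations of $\Nu$ directly from $\dim(\Nu)=k$ rather than passing through the completion $\ov{\Nu}$, whereas the paper runs the cycle $(1)\Rightarrow(2)\Rightarrow(3)\Rightarrow(1)$.
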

\begin{proof}
Suppose $\Gamma$ is $\mathbb Z$-reducible. 
Then there is a set of generators $\{b_1+I,\ldots,b_n+I\}$ 
of ${\rm T}$ such that the corresponding representation 
$\rho:\Pi\to {\rm GL}(n,\mathbb Z)$ is reducible. 
Let $k$ be the integer in the definition of reducibility, 
and let $V={\rm Span}\{b_1,\ldots, b_k\}$. 
Then $V$ is a $\Pi$-invariant vector subspace of $E^n$ 
with basis $\{b_1,\ldots, b_k\}$ such that  
$0< k < n$ and $b_i+I \in {\rm T}$ for each $i=1,\ldots, k$. 
Thus (1) implies (2). 

Suppose there exists a $\Pi$-invariant vector subspace $V$ of $E^n$\! 
with basis $\{b_1,\ldots, b_k\}$ such that  
$0< k < n$ and $b_i+I \in {\rm T}$ for each $i=1,\ldots, k$.  
Define 
$${\rm N}=\{a+I\in{\rm T}:\ a\in V\}.$$ 
As $V$ is $\Pi$-invariant, ${\rm N}$ is a normal subgroup of $\Gamma$. 
As $V = {\rm Span}(\Nu)$, we have that ${\rm dim}(\Nu) = k$.   
Thus (2) implies (3). 

Suppose $\Gamma$ has a normal subgroup ${\rm N}$ of dimension $k$ 
with $0 < k < n$.  
Then ${\rm T}$ has a set of generators $\{b_1+I,\ldots, b_n+I\}$ 
such that $b_1+I,\ldots, b_k+I$ generate $\ov{\Nu}\cap{\rm T}$ 
by the proof of Lemma 1. 
As $\ov{\rm N}\cap{\rm T}$ is a normal subgroup of $\Gamma$, 
the additive group generated by $b_1,\ldots, b_k$ is $\Pi$-invariant. 
Hence the representation $\rho:\Pi\to {\rm GL}(n,\mathbb Z)$ 
determined by the set of generators $\{b_1+I,\ldots,b_n+I\}$ of ${\rm T}$ 
is reducible. 
Thus (3) implies (1). 
\end{proof}

\begin{theorem} 
Let $\Gamma$ be an $n$-dimensional space group. 
If $\Gamma$ has a normal subgroup ${\rm N}$ of dimension $k$, 
then $\Gamma$ has a normal subgroup ${\rm N}'$ of dimension $(n-k)$. 
\end{theorem}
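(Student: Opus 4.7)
The plan is to produce $\Nu'$ as the group of translations supported on a $\Pi$-invariant sublattice of the translation lattice that is \emph{rationally complementary} to the sublattice spanning $V={\rm Span}(\Nu)$. The key technical input is the semisimplicity of the rational representation of the finite point group $\Pi$ on the translation lattice, guaranteed by Maschke's theorem. Once a suitable complement is in hand, the proof ends by either appealing to Theorem~11 ((2)$\Rightarrow$(3)) or by writing $\Nu'$ down directly.

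Concretely, let $\Tau=\{a+I\in\Gamma\}$ be the translation subgroup and $L=\{a:a+I\in\Tau\}\cong\integers^n$ the translation lattice, on which the point group $\Pi$ acts through the integer representation $\rho$. First I would replace $\Nu$ by its completion $\ov\Nu$ (which has the same span $V$) and set $L_1=\{a:a+I\in\ov\Nu\cap\Tau\}$. Because $\ov\Nu$ is complete, the proof of Lemma~1 shows that $L_1$ is a primitive sublattice of $L$ of rank $k$ spanning $V$, and by Theorem~2(1) it is $\Pi$-invariant. Next I would view $L\otimes\rationalnos$ as a $\rationalnos\Pi$-module and apply Maschke's theorem — equivalently, take the orthogonal complement with respect to a $\Pi$-averaged $\rationalnos$-valued positive definite bilinear form on $L\otimes\rationalnos$ — to obtain a $\Pi$-invariant $\rationalnos$-subspace $M$ with $L\otimes\rationalnos=(L_1\otimes\rationalnos)\oplus M$ and $\dim_{\rationalnos}M=n-k$. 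I would then set $L_2=L\cap M$. A routine clearing-of-denominators argument shows $L_2\otimes\rationalnos=M$, so $L_2$ is a $\Pi$-invariant sublattice of $L$ of rank $n-k$ whose $\realnos$-span $W=L_2\otimes\realnos$ is a $\Pi$-invariant real subspace of $E^n$ of dimension $n-k$ admitting a basis of translation vectors from $\Tau$. Finally, I would define $\Nu'=\{a+I:a\in L_2\}$; conjugation by $b+B\in\Gamma$ sends $a+I$ to $Ba+I$ with $Ba\in L_2$ by $\Pi$-invariance of $L_2$, so $\Nu'$ is a normal subgroup of $\Gamma$, and $\dim(\Nu')=\dim W=n-k$. (Alternatively, once $W$ is produced, Theorem~11 (2)$\Rightarrow$(3) applied with $W$ in place of $V$ packages this last step.)

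The main step requiring care is the passage from "$V$ is $\Pi$-invariant" to the existence of a $\Pi$-invariant \emph{rational} complement of $V$ in $L\otimes\rationalnos$. The tempting shortcut is to use the orthogonal complement $V^\perp\subseteq E^n$, which is $\Pi$-invariant because $\Pi\subseteq{\rm O}(n)$; however $V^\perp$ can contain no nonzero lattice vector at all (for example, when $\Gamma$ is purely a lattice of translations with a non-orthogonal basis and $V$ is the span of one basis vector), and then it fails condition (2) of Theorem~11. Maschke's theorem over $\rationalnos$ sidesteps this by producing a complement defined rationally with respect to $L$, which is precisely what forces $L\cap M$ to attain the full rank $n-k$ and thereby yield a normal subgroup of the correct dimension.
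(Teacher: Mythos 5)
Your proof is correct, and in substance it is the paper's proof: the authors dispose of this theorem in one line by citing Theorem~11 together with Proposition~2.1.1 of Brown--Neub\"user--Zassenhaus \cite{B-N-Z}, and that proposition is exactly the statement you prove by hand --- that a reducible integral representation of a finite group admits a $\Pi$-invariant sublattice of complementary rank, obtained via Maschke's theorem over $\rationalnos$ followed by intersecting the rational complement with the lattice. So your argument is a self-contained unpacking of the paper's citation rather than a different route; the details (passing to $\ov\Nu$ so that $L_1$ is primitive and spans $V$, clearing denominators to see that $L\cap M$ has full rank $n-k$, and then invoking Theorem~11 (2)$\Rightarrow$(3) or writing $\Nu'=\{a+I: a\in L_2\}$ directly) are all sound. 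Your closing caution is also well placed: the orthogonal complement $V^\perp$, though $\Pi$-invariant, need not meet the translation lattice in anything but $0$, which is precisely why the rational (rather than metric) complement is the right object here.
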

\begin{proof}
This follows from Theorem 11 and Proposition 2.1.1 of 
Brown, Neub\"user, and Zassenhaus \cite{B-N-Z}. 
\end{proof}

\section{Geometric Fiber Bundles} 

Let $\Gamma$ be an $n$-dimensional space group. 
If $E^n/\Gamma$ geometrically fibers over a flat manifold $B$ 
with generic fiber a flat orbifold $F$, 
then $E^n/\Gamma$ is a fiber bundle over $B$ with totally geodesic 
fibers isometric to $F$.  

\vspace{.15in}
\noindent{\bf Definition:}
A flat $n$-orbifold $M$ is a {\it geometric fiber bundle} over a flat $m$-manifold $B$ 
with fiber a flat $(n-m)$-orbifold $F$ if there is a surjective map $\eta: M \to B$, 
called the {\it fibration projection},  
such that for each point $y$ in $B$, there is an open metric ball $B(y,r)$ 
of radius $r>0$ centered at $y$ in $B$ such that 
$\eta$ is isometrically equivalent on $\eta^{-1}(B(y,r))$ 
to the natural projection $F\times B_y \to B_y$ onto an open metric ball $B_y$ in $E^m$ 
of radius $r$. 

\vspace{.15in}

\begin{lemma} 
Let ${\rm N}$ be a complete normal subgroup of an $n$-dimensional space group $\Gamma$, 
and let $V = {\rm Span}({\rm N})$. 
Then $(E^n/V)/(\Gamma/{\rm N})$ is a flat manifold 
if and only if $\Gamma/{\rm N}$ is torsion-free. 
\end{lemma}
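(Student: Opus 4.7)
The plan is to reduce the statement to the classical fact that a Euclidean space form is a manifold (rather than merely an orbifold) exactly when the acting crystallographic group is torsion-free, and then to apply this to the quotient group $\Gamma/\Nu$.

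First, I would set up the right object of study. By Lemma 1, since $\Nu$ is complete, $\Gamma/\Nu$ acts effectively as a discrete group of isometries of $E^n/V$. By Theorem 5, $\Gamma/\Nu$ is in fact an $m$-dimensional space group with $m=\dim(E^n/V)$. After identifying $E^n/V$ isometrically with $E^m$ (the quotient map $V^\perp\to E^n/V$ is an isometry, as noted at the beginning of the proof of Theorem 2), the quotient $(E^n/V)/(\Gamma/\Nu)$ is the orbit space of a space group acting on $E^m$.

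Next I would prove the equivalence by proving: a discrete group $\Delta$ of isometries of $E^m$ with compact orbit space $E^m/\Delta$ has $E^m/\Delta$ a flat manifold if and only if $\Delta$ is torsion-free. For the ``only if'' direction, suppose $\Delta$ has a nontrivial element $\gamma$ of finite order $r$. Then $\gamma=a+A$ with $A\in\mathrm{O}(m)$ of finite order, and averaging $x\mapsto\tfrac{1}{r}\sum_{i=0}^{r-1}\gamma^i(x)$ produces a fixed point of $\gamma$, so the action is not free and $E^m/\Delta$ fails to be a manifold at the image of this fixed point. For the ``if'' direction, suppose $\Delta$ is torsion-free; I need to show the action is free. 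If some $\gamma\in\Delta$ fixed a point $x$, then conjugating $\Delta$ by $-x+I$ makes $\gamma$ lie in $\mathrm{O}(m)$, so the cyclic subgroup $\langle\gamma\rangle$ is a discrete subgroup of the compact group $\mathrm{O}(m)$, hence finite, contradicting torsion-freeness. So the action is free, and $E^m/\Delta$ is a flat manifold.

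Combining these, $(E^n/V)/(\Gamma/\Nu)$ is a flat manifold if and only if $\Gamma/\Nu$ is torsion-free. The main content is really just the standard ``torsion-free discrete isometry group of $E^m$ acts freely'' argument; nothing here is delicate, and I expect the only care needed is the bookkeeping that justifies treating $\Gamma/\Nu$ as an honest space group on $E^m$, which is exactly what Lemma 1 and Theorem 5 have already established.
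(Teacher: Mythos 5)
Your proof is correct, and the ``if'' direction (torsion-free implies the action is free, so the quotient is a flat manifold) is the same as the paper's. Where you genuinely diverge is the ``only if'' direction. The paper argues top-down: if the quotient is a flat manifold, the quotient map from $E^n/V$ is a universal covering, so the quotient is a closed aspherical manifold, and the fundamental group of such a manifold is torsion-free --- a clean one-line deduction, but one that leans on a nontrivial cohomological fact (a finite group cannot act freely on a finite-dimensional contractible space). You instead argue the contrapositive from the ground up: a finite-order element $\gamma$ of $\Gamma/\Nu$ fixes the barycenter $\tfrac{1}{r}\sum_{i=0}^{r-1}\gamma^i(x)$ (affine maps preserve affine combinations, so this really is fixed), and a point with nontrivial stabilizer $G$ forces the local model $B/G$ to fail to be locally isometric to $E^m$ --- for instance because the volume of a small metric ball there is $|G|^{-1}$ times the Euclidean value. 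Your route is more elementary and self-contained; the paper's is shorter but imports a deeper theorem. The one step you should make explicit is the last one: a nontrivial effective orthogonal quotient $B/G$ can still be a \emph{topological} manifold (e.g.\ a cone point in dimension $2$), so you must say why it is not a \emph{flat} manifold, which is exactly what the volume (or cone-angle) argument supplies. Your bookkeeping via Lemma 1 and the isometric identification of $E^n/V$ with $E^m$ is exactly what is needed and introduces no circularity.
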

\begin{proof}
Suppose  $(E^n/V)/(\Gamma/{\rm N})$ is a flat manifold. 
Then the quotient map from $E^n/V$ to $(E^n/V)/(\Gamma/{\rm N})$ 
is a universal covering projection with $\Gamma/{\rm N}$ its group 
of covering transformations.  
Therefore $(E^n/V)/(\Gamma/{\rm N})$ is an aspherical manifold, 
and so its fundamental group is torsion-free. 
Therefore $\Gamma/{\rm N}$ is torsion-free. 

Conversely if $\Gamma/{\rm N}$ is torsion-free, 
then $(E^n/V)/(\Gamma/{\rm N})$ is a flat manifold, 
since the finite group $G_x=\Gamma_{V+x}/{\rm N}$ is trivial for each $x\in V^\perp$. 
\end{proof}

\begin{theorem} 
Let $\Nu$ be a normal subgroup of an $n$-dimensional space group $\Gamma$ 
such that $\Gamma/\Nu$ is torsion-free, and let $V = {\rm Span}(\Nu)$. 
Then $\Nu$ is complete, and the flat orbifold $E^n/\Gamma$ is a geometric fiber bundle 
over the flat manifold $(E^n/V)/(\Gamma/\Nu)$ with fiber the flat orbifold $V/\Nu$. 
\end{theorem}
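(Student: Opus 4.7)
The plan is to dispose of the completeness claim first, then assemble the bundle structure by combining Theorem 4 (to get the geometric fibration), Lemma 2 (to upgrade the base to a manifold), and a short torsion argument (to upgrade ``generic fiber'' to ``every fiber'').

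First I would show that $\Nu$ is complete. By Corollary 1, $\ov\Nu$ is a normal subgroup of $\Gamma$ containing $\Nu$, and by the discussion preceding Theorem 3, $\ov\Nu/\Nu$ is a finite subgroup of $\Gamma/\Nu$. Since $\Gamma/\Nu$ is torsion-free by hypothesis, we must have $\ov\Nu=\Nu$. Hence $\Nu$ is complete, and Theorem 4 applies to give a geometric fibration
$$\eta:E^n/\Gamma\to (E^n/V)/(\Gamma/\Nu)$$
with generic fiber $V/\Nu$. Lemma 2 then tells us that the base $(E^n/V)/(\Gamma/\Nu)$ is a flat $m$-manifold.

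The main task is to show that \emph{every} fiber of $\eta$ is isometric to $V/\Nu$, i.e.\ that the local model $(F\times B_y)/G_y \to B_y/G_y$ in the definition of geometric fibration has trivial $G_y$ for every $y$. Following the notation in the proof of Theorem 4, for each $x\in V^\perp$ the isotropy group of $V+x$ under the action of $\Gamma/\Nu$ on $E^n/V$ is $G_x=\Gamma_{V+x}/\Nu$, and this is a finite subgroup of $\Gamma/\Nu$ by Lemma 1. Since $\Gamma/\Nu$ is torsion-free, $G_x$ must be trivial. Consequently $\Gamma_{V+x}=\Nu$ for every $x\in V^\perp$, so every fiber of $\eta$ is isometric to $(V+x)/\Nu \cong V/\Nu$.

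Finally, to extract the bundle structure from the definition of geometric fibration, observe that with $G_x=\{1\}$ the local model from the proof of Theorem 4 reduces to the natural projection $\big((V+x)/\Nu\big)\times B_x \to B_x$, which is exactly the local triviality required in the definition of geometric fiber bundle with fiber $V/\Nu$. The main (mild) obstacle is simply keeping the bookkeeping between the local models of Theorem 4 and the definition of geometric fiber bundle aligned; once $G_x$ is known to be trivial, the diagram from the proof of Theorem 4 specialises directly to the local triviality chart, and the theorem is proved.
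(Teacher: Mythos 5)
Your proposal is correct and follows essentially the same route as the paper: the paper deduces completeness from Theorem 5 (whose relevant implication is exactly your observation that $\ov\Nu/\Nu$ is a finite subgroup of the torsion-free group $\Gamma/\Nu$, hence trivial) and then cites Theorem 4 and Lemma 2, whose proof already contains your point that $G_x=\Gamma_{V+x}/\Nu$ is trivial so every local model degenerates to a product. You have merely unpacked the citations; no gap.
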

\begin{proof}
We have that $\Nu$ is complete by Theorem 5. 
The rest of the theorem follows from Lemma 2 and Theorem 4. 
\end{proof}

\begin{theorem} 
Let $\Gamma$ be an $n$-dimensional space group 
with first Betti number $\beta_1$. 
Then $\Gamma$ has a unique normal subgroup ${\rm N}$ such that $\Gamma/{\rm N}$ 
is a free abelian group of rank $\beta_1$,  
and the flat orbifold $E^n/\Gamma$ uniquely fibers as a geometric fiber bundle 
over a flat $\beta_1$-torus. 
\end{theorem}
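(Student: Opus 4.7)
The plan is to produce the desired $\Nu$ from the abelianization of $\Gamma$, invoke Theorem 13 to obtain the fiber bundle structure, and then use Theorem 7 to handle uniqueness of the fibration.

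First I would construct $\Nu$ as follows. Let $\phi\colon \Gamma\to \Gamma/[\Gamma,\Gamma]\cong G\oplus \integers^{\beta_1}$ be the abelianization map, with $G$ a finite abelian group. Set $\Nu=\phi^{-1}(G\oplus 0)$, so $\Nu$ is a normal subgroup of $\Gamma$ with $\Gamma/\Nu\cong\integers^{\beta_1}$. For uniqueness: any normal subgroup $\Nu'$ with $\Gamma/\Nu'\cong\integers^{\beta_1}$ contains $[\Gamma,\Gamma]$, and the image $\Nu'/[\Gamma,\Gamma]$ in $G\oplus\integers^{\beta_1}$ is the kernel of a surjection onto a torsion-free group of rank $\beta_1$, which must equal the torsion subgroup $G\oplus 0$. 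Hence $\Nu'=\Nu$.

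Next, because $\Gamma/\Nu\cong\integers^{\beta_1}$ is torsion-free, Theorem 5 gives that $\Nu$ is complete and that $\Gamma/\Nu$ is a space group. Theorem 13 then gives that $E^n/\Gamma$ is a geometric fiber bundle over $(E^n/V)/(\Gamma/\Nu)$ with fiber $V/\Nu$, where $V={\rm Span}(\Nu)$. It remains to identify the base as a $\beta_1$-torus. Let $m=n-\dim V$, so $\Gamma/\Nu$ acts effectively as an $m$-dimensional space group on $E^n/V\cong E^m$. By Theorem 1, its translation subgroup $\Tau_0$ is free abelian of rank $m$ and of finite index in $\Gamma/\Nu\cong\integers^{\beta_1}$; comparing ranks of finite-index subgroups of $\integers^{\beta_1}$ forces $m=\beta_1$. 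Since $\Gamma/\Nu$ is abelian, conjugation of any $a+I\in \Tau_0$ by $b+B\in\Gamma/\Nu$ yields $Ba+I=a+I$, so $B$ fixes a spanning set of $E^{\beta_1}$ and thus $B=I$. Therefore $\Gamma/\Nu=\Tau_0$ is a full-rank lattice of translations and $(E^n/V)/(\Gamma/\Nu)$ is a $\beta_1$-torus.

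Finally, for uniqueness of the fibration, suppose $\eta'\colon E^n/\Gamma\to B'$ is any geometric fiber bundle projection with $B'$ a flat $\beta_1$-torus. Theorem 7 produces a complete normal subgroup $\Nu'$ of $\Gamma$ such that $\eta'$ is geometrically equivalent to the fibration determined by $\Nu'$; in particular $B'$ is isometric to $(E^n/V')/(\Gamma/\Nu')$. Since the base is a torus, $\Gamma/\Nu'$ is a lattice of translations, hence free abelian, and matching dimensions with the $\beta_1$-torus base gives $\Gamma/\Nu'\cong\integers^{\beta_1}$. By the uniqueness of $\Nu$ established above, $\Nu'=\Nu$, so the two fibrations coincide up to geometric equivalence. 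The main obstacle is identifying the base as a $\beta_1$-torus, which requires both showing $\dim V=n-\beta_1$ and showing that $\Gamma/\Nu$ acts purely by translations on $E^n/V$; both come down to the rigidity of torsion-free abelian space groups, which must be translation lattices in a Euclidean space of matching rank.
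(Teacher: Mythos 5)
Your proposal is correct and follows essentially the same route as the paper: construct ${\rm N}$ as the preimage of the torsion subgroup of $\Gamma/[\Gamma,\Gamma]$, deduce completeness and the fiber bundle structure from the torsion-freeness of $\Gamma/{\rm N}$ via Theorem 13, and get uniqueness of the fibration from Theorem 7 together with the uniqueness of ${\rm N}$. Your argument that the base is a $\beta_1$-torus (matching ranks of finite-index subgroups of $\integers^{\beta_1}$ and showing an abelian space group consists of translations) correctly fills in a step the paper only asserts.
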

\begin{proof}
We have $\Gamma/[\Gamma, \Gamma] \cong G\oplus\mathbb Z^{\beta_1}$ with $G$ a finite abelian group. 
Hence the subgroup of $\Gamma$ containing $[\Gamma,\Gamma]$ that corresponds to $G$ 
is the unique normal subgroup ${\rm N}$ of $\Gamma$ such that $\Gamma/{\rm N}$ 
is a free abelian group of rank $\beta_1$. 
By Theorem 13, we have that ${\rm N}$ is complete. 
Let $V={\rm Span}({\rm N})$. 
Then $(E^n/V)/(\Gamma/{\rm N})$ is a flat $\beta_1$-torus, 
since $\Gamma/{\rm N}$ is a free abelian group of rank $\beta_1$. 
Therefore $E^n/\Gamma$ fibers as a geometric fiber bundle over a flat $\beta_1$-torus by Theorem 4. 
The flat orbifold $E^n/\Gamma$ uniquely fibers as a geometric fiber bundle 
over a flat $\beta_1$-torus by Theorem 7, since ${\rm N}$ is the unique normal 
subgroup of $\Gamma$ such that $\Gamma/{\rm N}$ is free abelian of rank $\beta_1$.  
\end{proof}

\begin{lemma} 
If $\Gamma$ is an $n$-dimensional space group with translation group {\rm T} and 
point group $\Pi$,  
then the transfer homomorphism ${\rm tr}: \Gamma \to {\rm T}$ 
is given by the formula
$${\rm tr}(b+B) = \left({\textstyle\sum}\{A:A\in\Pi\}\right)b+ I \ \ \hbox{for each}\ \ b+B\in\Gamma.$$
\end{lemma}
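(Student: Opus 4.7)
The plan is to apply the definition of the transfer directly, using the index-$|\Pi|$ subgroup ${\rm T}$ of $\Gamma$ and exploiting the fact that ${\rm T}$ is abelian, so the transfer lands in ${\rm T}$ itself without any further abelianization. First I would choose coset representatives $\{\gamma_A : A \in \Pi\}$ of the form $\gamma_A = c_A + A$, one for each element of the point group; this is possible because $\Gamma/{\rm T} \cong \Pi$. Since the transfer is independent of these choices, the $c_A$ must cancel out at the end, and this serves as a useful organizing principle for the calculation.

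For $g = b+B$, the identity $g\gamma_A = (b+Bc_A)+BA$ shows that the induced permutation of $\Pi$ is left multiplication by $B$. Writing $g\gamma_A = \gamma_{BA}(t_A + I)$ and solving for $t_A$ yields
$$t_A = A^{-1}B^{-1}b + A^{-1}c_A - (BA)^{-1}c_{BA},$$
so that ${\rm tr}(g) = (\sum_A t_A) + I$, since the group operation on ${\rm T}$ is addition of translation vectors. Two reindexings of $\Pi$ then finish the job. First, both $\sum_A A^{-1}c_A$ and $\sum_A (BA)^{-1}c_{BA}$ equal $\sum_{A'\in\Pi} (A')^{-1}c_{A'}$, the second via the bijection $A \mapsto BA$ of $\Pi$, so these contributions cancel, as expected. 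Second, the map $A \mapsto A^{-1}B^{-1}$ is a bijection of $\Pi$ (it is inversion followed by right multiplication by $B^{-1}$, both bijections), so $\sum_A A^{-1}B^{-1}b$ rewrites as $(\sum_{C\in\Pi} C)b$, giving the claimed formula.

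I do not expect a serious obstacle here; the whole argument is a single direct calculation. The points requiring care are the precise permutation convention in the definition of the transfer, so that one correctly identifies which side of the equation $g\gamma_A = \gamma_{\sigma(A)}h_A$ the factor $h_A$ appears on, and the bookkeeping in the two reindexing arguments that produce the cancellations. A sanity check with $B = I$ confirms that ${\rm tr}(b+I) = (\sum_{A\in\Pi} A)b+I$, consistent with the general formula for a pure translation.
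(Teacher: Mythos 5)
Your proof is correct and follows essentially the same route as the paper: a direct computation of the transfer using coset representatives $c_A+A$ indexed by the point group, with the unspecified vectors $c_A$ cancelling after a reindexing over $\Pi$. The only difference is a convention choice (you use the left-coset form $g\gamma_A=\gamma_{BA}(t_A+I)$, the paper uses $\gamma_A g \gamma_{AB}^{-1}$), which costs you one extra reindexing $A\mapsto (BA)^{-1}$ to identify the coefficient of $b$ but changes nothing of substance.
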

\begin{proof}
For each $A\in\Pi$, choose a coset representative $a_A+A$
of T in $\Gamma$ corresponding to $A$. 
Given an element $b+B\in \Gamma$ and a coset representative $a_A+A$,  
then there exists a unique coset representative $a_{A'}+A'$ 
such that 
$$(a_A+A)(b+B)(a_{A'}+A')^{-1}\in{\rm T}.$$
The transfer homomorphism ${\rm tr}: \Gamma \to {\rm T}$ is defined by the formula
$${\rm tr}(b+B) = {\textstyle\prod}\{(a_A+A)(b+B)(a_{A'}+A')^{-1}: A\in \Pi\}.$$
We have that 
$$(a_A+A)(b+B)(a_{A'}+A')^{-1} = a_A+Ab- AB(A')^{-1}a_{A'}+AB(A')^{-1}.$$
Therefore $AB(A')^{-1}=I$, and so $A' = AB$. 
Hence we have that
\begin{eqnarray*}
{\rm tr}(b+B) & = & {\textstyle\prod}\{a_A+Ab-a_{AB}+I: A\in\Pi\} \\
			   & = & \big({\textstyle\sum}\{a_A:A\in\Pi\}
			   -{\textstyle\sum}\{a_{AB}:A\in\Pi\}+{\textstyle\sum}\{Ab:A\in\Pi\}\big) + I \\
			   & = & \big({\textstyle\sum}\{A:A\in\Pi\}\big)b + I. 
\end{eqnarray*}
\end{proof}

\begin{lemma} 
If $\Pi$ is a finite group of orthogonal transformations of $E^n$,  
then 
\begin{enumerate}
\item ${\rm Im}({\textstyle\sum}\{A\in \Pi\}) = {\rm Fix}(\Pi)$, 
\item ${\rm Ker}({\textstyle\sum}\{A\in \Pi\}) = {\rm Fix}(\Pi)^\perp$. 
\end{enumerate}
\end{lemma}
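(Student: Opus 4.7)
Let me denote $S = \sum\{A : A \in \Pi\}$ for brevity. The plan is to first establish (1) by a direct group-reindexing argument, then to obtain (2) by observing that $S$ is a self-adjoint operator (since $\Pi$ consists of orthogonal transformations) and applying the standard identity $\ker = (\mathrm{im})^\perp$.

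For (1), I would first show $\mathrm{Im}(S) \subseteq \mathrm{Fix}(\Pi)$. Given any $B \in \Pi$ and any $x \in E^n$,
\[ B(Sx) = \sum_{A \in \Pi} (BA)x = \sum_{A' \in \Pi} A' x = Sx,\]
since $A \mapsto BA$ is a bijection of $\Pi$. Conversely, if $y \in \mathrm{Fix}(\Pi)$, then every $Ay = y$, so $Sy = |\Pi|\,y$, and therefore $y = S\bigl((1/|\Pi|)y\bigr) \in \mathrm{Im}(S)$. Combining the two inclusions yields (1).

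For (2), the key observation is that each $A \in \Pi$ is orthogonal, so $A^T = A^{-1}$. Since $A \mapsto A^{-1}$ is a bijection of $\Pi$,
\[ S^T = \sum_{A \in \Pi} A^T = \sum_{A \in \Pi} A^{-1} = \sum_{A' \in \Pi} A' = S,\]
so $S$ is symmetric. For any symmetric linear operator on a finite-dimensional inner product space, $\ker(S) = \mathrm{Im}(S)^\perp$. Combining this with part (1) gives $\ker(S) = \mathrm{Fix}(\Pi)^\perp$, which is (2).

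I do not expect any real obstacle here: the only substantive ingredient is the self-adjointness of $S$, which is immediate from orthogonality of the elements of $\Pi$ together with the fact that the sum defining $S$ is invariant under inversion in the group. Everything else is routine reindexing and the standard orthogonal decomposition $E^n = \mathrm{Fix}(\Pi) \oplus \mathrm{Fix}(\Pi)^\perp$ associated with a finite group of orthogonal transformations.
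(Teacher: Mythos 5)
Your proof is correct. Part (1) is essentially the paper's argument: the paper writes the forward inclusion as $(B-I)\bigl(\sum\{A\in\Pi\}\bigr)=O$, which is exactly your reindexing identity $B(Sx)=Sx$, and the reverse inclusion via $Sy=|\Pi|y$ is identical. For part (2) you take a genuinely different route. The paper decomposes an arbitrary $x\in\ker(S)$ as $x=u+v$ with $u\in{\rm Fix}(\Pi)$, $v\in{\rm Fix}(\Pi)^\perp$, and uses the fact that ${\rm Fix}(\Pi)^\perp$ is $\Pi$-invariant to conclude that $Sv\in{\rm Fix}(\Pi)^\perp$ while $|\Pi|u\in{\rm Fix}(\Pi)$, forcing $u=0$; the converse inclusion then follows from (1) together with ${\rm Fix}(\Pi)\cap{\rm Fix}(\Pi)^\perp=\{0\}$. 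You instead observe that $S^T=\sum_A A^{-1}=S$, so $S$ is self-adjoint, and invoke $\ker(S)={\rm Im}(S)^\perp$. Both arguments use the orthogonality of the elements of $\Pi$ at exactly one point — the paper to get $\Pi$-invariance of ${\rm Fix}(\Pi)^\perp$, you to get $A^T=A^{-1}$ — so neither is more general, but yours dispatches (2) in one line once (1) is known, whereas the paper's is more elementary in that it avoids appealing to the four-subspaces identity and instead exhibits the decomposition explicitly. Either is perfectly acceptable.
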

\begin{proof}
Let $B\in \Pi$. 
Observe that 
$$(B-I)({\textstyle\sum}\{A\in \Pi\}) = O.$$
Now ${\rm Ker}(B-I) = {\rm Fix}(B)$. 
Hence we have
$${\rm Im}({\textstyle\sum}\{A\in \Pi\}) \subseteq {\rm Fix}(\Pi).$$
Let $x\in {\rm Fix}(\Pi)$. 
Then we have 
$$({\textstyle\sum}\{A\in \Pi\})(x) = |\Pi|x.$$
Hence $x\in {\rm Im}({\textstyle\sum}\{A\in \Pi\})$. 
This proves (1). 

Let $x\in {\rm Ker}({\textstyle\sum}\{A\in \Pi\})$. 
Write $ x = u+v$ with $u \in {\rm Fix}(\Pi)$ and $v\in {\rm Fix}(\Pi)^\perp$. 
Then we have that 
$$0 = ({\textstyle\sum}\{A\in \Pi\})(x) = |\Pi|u + ({\textstyle\sum}\{A\in \Pi\})(v).$$
Now $|\Pi|u \in {\rm Fix}(\Pi)$ and 
$({\textstyle\sum}\{A\in \Pi\})(v) \in {\rm Fix}(\Pi)^\perp$, 
since ${\rm Fix}(\Pi)^\perp$ is a $\Pi$-invariant subspace of $E^n$. 
Therefore $|\Pi|u = 0$, and so $u = 0$. 
Hence $x\in{\rm Fix}(\Pi)^\perp$. 

Conversely, suppose $x\in {\rm Fix}(\Pi)^\perp$. 
By (1), we have 
$$({\textstyle\sum}\{A\in \Pi\})(x)\in{\rm Fix}(\Pi)\cap {\rm Fix}(\Pi)^\perp = \{0\}.$$
Therefore $x\in {\rm Ker}({\textstyle\sum}\{A\in \Pi\})$. 
This proves (2). 
\end{proof}

Let $\Nu$ and $\Nu'$ be a normal subgroups of a space group $\Gamma$. 
We say that $\Nu$ and $\Nu'$ are {\it orthogonal} if ${\rm Span}(\Nu') = ({\rm Span}(\Nu))^\perp$. 
If $\Nu$ and $\Nu'$ are orthogonal, complete, normal subgroups of $\Gamma$, 
we define $\Nu^\perp = \Nu'$.  

\begin{theorem} 
Let $\Gamma$ be an $n$-dimensional space group 
with first Betti number $\beta_1$. 
Then the kernel of the transfer homomorphism $tr: \Gamma \to {\rm T}$ 
is the unique normal subgroup ${\rm N}$ of $\Gamma$ such that $\Gamma/{\rm N}$ 
is a free abelian group of rank $\beta_1$,  
Moreover ${\rm N}$ and $Z(\Gamma)$ are orthogonal, complete, normal subgroups of $\Gamma$.  
\end{theorem}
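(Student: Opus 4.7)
The plan is to combine Lemma 3 (the formula for $\mathrm{tr}$) with Lemma 4 (the image and kernel of the operator $\sigma=\sum\{A:A\in\Pi\}$) to read off both the kernel and image of the transfer, and then invoke Theorem 14 for uniqueness and completeness. By Lemma 3, $\mathrm{tr}(b+B)=\sigma(b)+I$, so Lemma 4 immediately gives
$$\Nu:=\ker(\mathrm{tr})=\{b+B\in\Gamma:b\in{\rm Fix}(\Pi)^\perp\},$$
and shows that $\mathrm{Im}(\mathrm{tr})\subseteq \Tau\cap{\rm Fix}(\Pi)$. Since $\Tau$ is free abelian, so is $\mathrm{Im}(\mathrm{tr})$, and its $\mathbb Z$-rank equals its real dimension. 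Because $L=\{b:b+I\in\Tau\}$ spans $E^n$ (Theorem 1) and $\sigma(E^n)={\rm Fix}(\Pi)$, the subgroup $\mathrm{Im}(\mathrm{tr})$ spans ${\rm Fix}(\Pi)$, so it has rank $\dim{\rm Fix}(\Pi)$.

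I would then identify $\dim{\rm Fix}(\Pi)$ with $\beta_1$. Theorem 6 gives that $Z(\Gamma)$ is free abelian of rank $\beta_1$ and consists entirely of translations. Since $a+I\in\Tau$ is central precisely when $Ba=a$ for every $B\in\Pi$, i.e.\ $a\in{\rm Fix}(\Pi)$, the group $Z(\Gamma)$ sits as a lattice inside ${\rm Fix}(\Pi)$; hence $\dim{\rm Fix}(\Pi)=\beta_1$ and ${\rm Span}(Z(\Gamma))={\rm Fix}(\Pi)$. Therefore $\Gamma/\Nu\cong \mathrm{Im}(\mathrm{tr})$ is free abelian of rank $\beta_1$. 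Uniqueness of such a normal subgroup and completeness of $\Nu$ both follow directly from Theorem 14 (completeness is available because the quotient is torsion-free, via Theorem 13).

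The last task is to verify ${\rm Span}(\Nu)={\rm Fix}(\Pi)^\perp$. The inclusion $\subseteq$ is immediate from the explicit description of $\Nu$. For the reverse inclusion I use an averaging trick: any $b\in L$ decomposes as $b=b_\parallel+b_\perp$ with $b_\parallel\in{\rm Fix}(\Pi)$ and $b_\perp\in{\rm Fix}(\Pi)^\perp$, and Lemma 4(1) gives $\sigma(b)=|\Pi|b_\parallel$, which is the translation part of $\mathrm{tr}(b+I)$ and hence lies in $L$; consequently $|\Pi|b_\perp=|\Pi|b-\sigma(b)\in L\cap{\rm Fix}(\Pi)^\perp$. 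Since $L$ spans $E^n$, the orthogonal projections $b_\perp$ span ${\rm Fix}(\Pi)^\perp$, and so does $L\cap{\rm Fix}(\Pi)^\perp=\{b:b+I\in\Nu\}$. Combined with ${\rm Span}(Z(\Gamma))={\rm Fix}(\Pi)$, this yields the required orthogonality.

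The main obstacle, such as it is, is the dimensional identification ${\rm Span}(Z(\Gamma))={\rm Fix}(\Pi)$: the containment $\subseteq$ is formal, but equality requires the lattice $\{a+I\in\Tau:a\in{\rm Fix}(\Pi)\}$ to have full rank in ${\rm Fix}(\Pi)$, and this is precisely what the Betti number computation of Theorem 6 provides. Once that is in hand, everything else is a mechanical unpacking of Lemmas 3 and 4 together with Theorem 14.
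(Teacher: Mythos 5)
Your proof is correct, and its skeleton matches the paper's: Lemmas 3 and 4 identify $\ker({\rm tr})=\{b+B\in\Gamma: b\in{\rm Fix}(\Pi)^\perp\}$ and place ${\rm Im}({\rm tr})$ among the central translations, and Theorem 14 supplies uniqueness and completeness. Where you genuinely diverge is in the two dimension identifications. For the rank of ${\rm Im}({\rm tr})$, the paper simply sandwiches $\{|\Pi|b+I: b+I\in Z(\Gamma)\}\subseteq {\rm Im}({\rm tr})\subseteq Z(\Gamma)$ and quotes Theorem 6, whereas you compute the span of ${\rm Im}({\rm tr})$ and separately show $\dim{\rm Fix}(\Pi)=\beta_1$. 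For the equality ${\rm Span}(\Nu)={\rm Fix}(\Pi)^\perp$, the paper argues by a pure dimension count, getting $\dim{\rm Span}(\Nu)=n-\beta_1$ from the fibration over a $\beta_1$-torus in Theorem 14; you instead prove the reverse inclusion directly by projecting the translation lattice: $|\Pi|b_\perp = |\Pi|b-\sigma(b)$ lies in the lattice and in ${\rm Fix}(\Pi)^\perp$, and these vectors span ${\rm Fix}(\Pi)^\perp$. Your route is more self-contained (it does not need to re-invoke the fibration of Theorem 14 to pin down $\dim{\rm Span}(\Nu)$), at the cost of a little more computation. One small attribution slip in your closing paragraph: Theorem 6 only gives ${\rm rank}(Z(\Gamma))=\beta_1$ and the inclusion ${\rm Span}(Z(\Gamma))\subseteq{\rm Fix}(\Pi)$; the equality ${\rm Span}(Z(\Gamma))={\rm Fix}(\Pi)$, i.e.\ that $Z(\Gamma)$ has full rank in ${\rm Fix}(\Pi)$, comes not from Theorem 6 but from your own earlier averaging observation that ${\rm Im}({\rm tr})$ spans ${\rm Fix}(\Pi)$ and consists of central translations. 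Since you do establish that observation, this is a misstatement of provenance rather than a gap.
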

\begin{proof}
Let $\Pi$ be the point group of $\Gamma$. 
By Lemmas 3 and 4, we have that 
$$tr(Z(\Gamma)) = \{|\Pi|b+ I: b+I\in Z(\Gamma)\}\subseteq {\rm Im}(tr) \subseteq Z(\Gamma).$$
Hence ${\rm Im}(tr)$ is a free abelian group of rank $\beta_1$ by Theorem 6.   
Therefore ${\rm Ker}(tr)$ is the unique normal subgroup ${\rm N}$ of $\Gamma$ 
such that $\Gamma/{\rm N}$ is a free abelian group of rank $\beta_1$. 

By Lemmas 3 and 4, we have that 
$$\Nu = {\rm Ker}(tr) = \{b+ B\in \Gamma: b\in {\rm Fix}(\Pi)^\perp\}.$$
Hence 
${\rm Span}(\Nu) \subseteq {\rm Fix}(\Pi)^\perp.$
By Theorems 6 and 14, we have that 
$${\rm dim}({\rm Span}(\Nu)) = n-\beta_1 = {\rm dim}({\rm Fix}(\Pi)^\perp).$$
Therefore ${\rm Span}(\Nu) ={\rm Fix}(\Pi)^\perp$. 
Now as ${\rm Span}(Z(\Gamma)) ={\rm Fix}(\Pi)$, we conclude that $\Nu^\perp = Z(\Gamma)$. 
\end{proof}

\section{Crystallographic Group Extensions} 
Let $\Nu$ be a complete normal subgroup of an $n$-dimensional space group $\Gamma$, 
let $V = {\rm Span}(\Nu)$, and let $m = {\rm dim}(E^n/V)$.  
Then $\Nu$ is an $(n-m)$-dimensional space group by Theorem 2, and 
$\Gamma/\Nu$ is an $m$-dimensional space group by Theorem 4. 
We call the exact sequence of natural group homomorphisms, 
$$ 1 \to {\Nu}\ {\buildrel i\over \longrightarrow}\ \Gamma\ {\buildrel p\over\longrightarrow} 
\ \Gamma/\Nu\to 1,$$
a {\it space group extension}. 
In this section, we study the relationship between the point groups of $\Nu, \Gamma$, 
and $\Gamma/\Nu$.

\begin{lemma} 
Let $\Gamma$ be a space group with group of translations $\Tau$ and point group $\Pi$.  
Let $\Nu$ be a normal subgroup of $\Gamma$, 
and let $\Phi = \{A\in \Pi:a+A\in\Nu\ \hbox{for some}\ a\}$.
Then 
\begin{enumerate}
\item the group of translations of $\Nu$ is $\Nu\cap\Tau$, 
\item the point group of $\Nu$ is isomorphic to $\Phi$, and   
\item the group $\Phi$ is a normal subgroup of $\Pi$. 
\end{enumerate}
\end{lemma}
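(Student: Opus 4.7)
My plan is to derive all three parts from Theorem 2 together with the point-group homomorphism restricted to $\Nu$.

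For part (1), I would read off the translations of $\Nu$ from its action on $V = {\rm Span}(\Nu)$, on which $\Nu$ acts effectively as a space group by Theorem 2(3). If $a+A \in \Nu$, then by Theorem 2(2) we have $a \in V$ and $V^\perp \subseteq {\rm Fix}(A)$, so $A$ is determined by $A|_V$, and $A|_V = I|_V$ forces $A = I$. Hence an element of $\Nu$ acts as a translation of $V$ iff it already has the form $a+I$, giving translation subgroup $\Nu \cap \Tau$.

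For part (2), the point-group homomorphism for $\Nu$ as a space group on $V$ is the restriction map $a+A \mapsto A|_V$, whose kernel by part (1) is $\Nu \cap \Tau$. On the other hand, the restriction of the ambient map $\eta: \Gamma \to \Pi$, $a+A \mapsto A$, to $\Nu$ has image $\Phi$ by definition and the same kernel $\Nu \cap \Tau$. Both quotients therefore factor $\Nu/(\Nu\cap\Tau)$, and the key observation is that the map $A \mapsto A|_V$ from $\Phi$ to ${\rm O}(V)$ is injective, since Theorem 2(2) gives $V^\perp \subseteq {\rm Fix}(A)$ for every $A \in \Phi$ so $A$ is recovered from $A|_V$. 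Composing these identifications identifies the point group of $\Nu$ with $\Phi$.

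For part (3), I would use the standard conjugation computation. Given $B \in \Pi$ with $b+B \in \Gamma$, and $A \in \Phi$ with $a+A \in \Nu$, normality of $\Nu$ gives $(b+B)(a+A)(b+B)^{-1} \in \Nu$, and this conjugate has orthogonal part $BAB^{-1}$, so $BAB^{-1} \in \Phi$.

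I do not expect a real obstacle here; the lemma is essentially a packaging of Theorem 2. The only point that needs explicit care is the injectivity of the restriction $A \mapsto A|_V$ on $\Phi$, which is exactly what promotes the natural surjection $\Nu/(\Nu\cap\Tau) \to \Phi$ into an isomorphism with the \emph{point group} of $\Nu$, as opposed to merely a quotient of it.
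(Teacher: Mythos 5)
Your proposal is correct and takes essentially the same approach as the paper: part (1) comes from Theorem 2 (an element $a+A\in\Nu$ acts as a translation on $V$ iff $A=I$, since $V^\perp\subseteq{\rm Fix}(A)$), and part (2) from identifying ${\rm Ker}(\eta|_\Nu)$ with $\Nu\cap\Tau$. The only cosmetic difference is in (3), where you conjugate directly while the paper notes that $\Phi$ corresponds to the normal subgroup $\Nu\Tau/\Tau$ of $\Gamma/\Tau$; both are immediate.
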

\begin{proof}
Let $V = {\rm Span}(\Nu)$, and let $a+A\in \Nu$. 
By Theorem 2, we have that $a\in V$ and $V^\perp \subseteq {\rm Fix}(A)$. 
Hence $a+A\in\Nu$ acts as a translation on $V$ if and only if $A = I$.  
Thus (1) holds. Let $\eta:\Gamma\to\Pi$ be defined by $\eta(b+B) = B$. 
Then (2) follows from (1), since the ${\rm Ker}(\eta|_\Nu)=\Nu\cap\Tau$, 
and (3) follows from (1), since $\Nu/(\Nu\cap\Tau) \cong\Nu\Tau/\Tau$, 
and so $\Phi$ corresponds to the normal subgroup $\Nu\Tau/\Tau$ of $\Gamma/\Tau$.  
\end{proof}

\begin{theorem} 
Let $\Gamma$ be a space group with point group $\Pi$.   
Let $\Nu$ be a complete normal subgroup of $\Gamma$, 
let $V = {\rm Span}(\Nu)$, let  $\Psi = \{B\in\Pi: V^\perp\subseteq {\rm Fix}(B)\}$, 
let $\Mu = \{b+B\in\Gamma: B\in \Psi\}$, 
and let $\Phi$ be as in Lemma 5.  
Then 
\begin{enumerate}
\item the group of translations of $\Gamma/\Nu$ is $\Mu/\Nu$, 
\item the group $\Phi$ is a normal subgroup of $\Psi$,   
\item the group $\Psi$ is a normal subgroup of $\Pi$, 
\item the point group of $\Gamma/\Nu$ is isomorphic to $\Pi/\Psi$.  
\end{enumerate}
\end{theorem}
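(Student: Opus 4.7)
The plan is to work with the natural isometric action of $\Gamma/\Nu$ on $E^n/V$ defined right after Theorem 2 (and shown to be a space group action on $E^m\cong E^n/V$ in Theorem 4), and to read off the translation subgroup and point group directly from that action. The one geometric observation driving everything is that each $B\in\Pi$ preserves both $V$ (by Theorem 2(1)) and $V^\perp$ (by orthogonality of $B$), so the action on $E^n/V$ is controlled entirely by the restriction $B|_{V^\perp}$.

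I would first dispatch (3). For $B\in\Psi$ and $C\in\Pi$, I want $V^\perp\subseteq {\rm Fix}(CBC^{-1})$. Given $x\in V^\perp$, the element $C^{-1}$ sends $x$ into $V^\perp$ (since $C$ preserves $V^\perp$), then $B$ fixes $C^{-1}x$, and $C$ returns it to $x$; hence $CBC^{-1}\in\Psi$. Next I would observe $\Nu\subseteq\Mu$ using Theorem 2(2): every $a+A\in\Nu$ satisfies $V^\perp\subseteq{\rm Fix}(A)$, so $A\in\Psi$.

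For (1), the coset $\Nu(b+B)$ acts on $E^n/V$ by $V+x\mapsto V+b+Bx$, which is a translation of $E^n/V$ iff $(B-I)x\in V$ for every $x\in E^n$. Writing $x=u+v$ with $u\in V$ and $v\in V^\perp$: $(B-I)u\in V$ because $BV=V$, while $(B-I)v\in V^\perp$ because $B V^\perp = V^\perp$. So the condition collapses to $(B-I)v=0$ for all $v\in V^\perp$, that is, $B\in\Psi$. Therefore the translation subgroup of $\Gamma/\Nu$ is exactly $\Mu/\Nu$. For (4), the point-group homomorphism $\eta:\Gamma\to\Pi$ has kernel $\Tau\subseteq\Mu$ and satisfies $\eta(\Mu)=\Psi$, so $\Mu$ is normal in $\Gamma$ (by (3)) and $\Gamma/\Mu\cong \Pi/\Psi$; the point group of $\Gamma/\Nu$ is then $(\Gamma/\Nu)/(\Mu/\Nu)\cong \Gamma/\Mu\cong \Pi/\Psi$. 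Finally (2) is immediate: $\Phi\subseteq\Psi$ by Theorem 2(2) applied to elements of $\Nu$, and $\Phi$ is normal in $\Pi$ by Lemma 5(3), hence \emph{a fortiori} normal in $\Psi$.

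The main obstacle is essentially nonexistent: the whole theorem is bookkeeping on Theorem 2 and Lemma 5, and the only nontrivial content is the orthogonal-decomposition argument picking out exactly which cosets $\Nu(b+B)$ act as translations on $E^n/V$. The one detail I would want to be careful with is that (3) must be proved before (4) so that $\Mu$ is known to be normal in $\Gamma$ when forming $\Gamma/\Mu$.
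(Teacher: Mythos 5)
Your proposal is correct and follows essentially the same route as the paper: identify the translations of $\Gamma/\Nu$ by the condition $(B-I)x\in V$ for all $x$ (the paper restricts to $x\in V^\perp$ and uses $Bx-x\in V\cap V^\perp=\{0\}$, which is the same orthogonal-decomposition argument), prove (3) by the conjugation computation on $V^\perp$, get (2) from Theorem 2 and Lemma 5, and derive (4) from the isomorphisms $(\Gamma/\Nu)/(\Mu/\Nu)\cong\Gamma/\Mu\cong(\Gamma/\Tau)/(\Mu/\Tau)\cong\Pi/\Psi$. No gaps.
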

\begin{proof} Let $b+B\in\Gamma$.  Suppose $b+B$ acts as a translation on $E^n/V$.  
Then $(b+B)(V+x) = V + c + x$ for some $c\in E^n$ for all $x\in V^\perp$. 
Now $(b+B)(V+x) = V+b+Bx$.  Taking $x=0$, we see that $b-c\in V$, 
and so $V+b+Bx = V+b+x$.  Hence $Bx-x \in V\cap V^\perp = \{0\}$. 
Therefore $x\in {\rm Ker}(B-I) = {\rm Fix}(B)$. 
Hence $V^\perp \subseteq {\rm Fix}(B)$, and so $b+B\in M$. 
Conversely if $b+B\in M$, then $b+B$ obviously acts as a translation on $E^n/V$. 
Thus (1) holds. 
Let $A\in \Phi$.  By Theorem 2, we have that $A\in\Psi$. 
Thus (2) holds by Lemma 5. 

Let $B\in \Psi$, and let $C\in\Pi$. By Theorem 2, we have that 
$C$ leaves $V$ invariant. Hence $C$ leaves $V^\perp$ invariant. 
Therefore $CBC^{-1}\in\Psi$. 
Thus (3) holds. 

Let $\Tau$ be the translation group of $\Gamma$. Then (4) follows from (1), since
$$(\Gamma/\Nu)/(\Mu/\Nu)\cong \Gamma/\Mu \cong (\Gamma/\Tau)/(\Mu/\Tau) \cong \Pi/\Psi.$$

\vspace{-.2in}
\end{proof}
 
For example, let $e_1 = (1,0)$ and $e_2 = (0,1)$, let $t_i  = e_i+I$ for $i=1,2$, 
let $\beta = \frac{1}{2}e_1+{\rm diag}(1,-1)$, and let $\Gamma = \langle t_1,t_2,\beta\rangle$.  
Then $\Gamma$ is a 2-dimensional space group, and $E^2/\Gamma$ is a flat Klein bottle. 
The group $\langle t_2\rangle$ is a complete normal subgroup of $\Gamma$ 
and ${\rm Span}\langle t_2\rangle = {\rm Span}\{e_2\}$. 
Hence $\Phi$ is trivial and $\Psi$ has order two. 

\begin{corollary} 
If $\Gamma$ is a space group with translation group $\Tau$,  
then the group of translations of $\Gamma/Z(\Gamma)$ is $\Tau/Z(\Gamma)$ 
and the point group of $\Gamma/Z(\Gamma)$ is isomorphic to the point group of $\Gamma$. 
\end{corollary}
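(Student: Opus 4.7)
The plan is to deduce the corollary as a direct application of Theorem 15 to the complete normal subgroup $\Nu = Z(\Gamma)$, which is complete by Theorem 6. The whole task reduces to identifying the subgroup $\Psi$ in the notation of Theorem 15 and showing that it is trivial; once that is done, parts (1) and (4) of Theorem 15 immediately give the two assertions.

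First I would recall, from the proof of Theorem 6, that $V = {\rm Span}(Z(\Gamma))$ coincides with ${\rm Fix}(\Pi)$, where $\Pi$ is the point group of $\Gamma$. Indeed, Theorem 6 shows $V \subseteq {\rm Fix}(B)$ for every $B \in \Pi$, so $V \subseteq {\rm Fix}(\Pi)$; conversely, any translation $a+I \in \Tau$ with $a \in {\rm Fix}(\Pi)$ is central, so ${\rm Fix}(\Pi) \subseteq V$ by the characterization $\ov{Z(\Gamma)} = \{a+I \in \Gamma: a\in V\}$ together with completeness. Hence $V = {\rm Fix}(\Pi)$ and $V^\perp = {\rm Fix}(\Pi)^\perp$.

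Next I would compute $\Psi$. By definition $\Psi = \{B \in \Pi : V^\perp \subseteq {\rm Fix}(B)\}$. If $B \in \Psi$, then ${\rm Fix}(\Pi)^\perp \subseteq {\rm Fix}(B)$; but also ${\rm Fix}(\Pi) \subseteq {\rm Fix}(B)$ because $B \in \Pi$. Since $E^n = {\rm Fix}(\Pi) \oplus {\rm Fix}(\Pi)^\perp$, this forces ${\rm Fix}(B) = E^n$, i.e.\ $B = I$. Thus $\Psi = \{I\}$, and therefore the subgroup $\Mu = \{b+B \in \Gamma: B \in \Psi\}$ is precisely the full translation group $\Tau$.

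Finally, I would invoke Theorem 15: part (1) gives that the translation group of $\Gamma/Z(\Gamma)$ is $\Mu/\Nu = \Tau/Z(\Gamma)$, and part (4) gives that its point group is isomorphic to $\Pi/\Psi = \Pi/\{I\} \cong \Pi$. There is no real obstacle here; the only substantive point is the identification $V = {\rm Fix}(\Pi)$, which is essentially extracted from the proof of Theorem 6, and the rest is a direct appeal to Theorem 15.
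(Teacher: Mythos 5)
Your overall strategy is exactly the intended one: the corollary is meant to follow by applying the theorem on the translation and point groups of $\Gamma/\Nu$ (Theorem 16 in the paper's numbering, not 15) to $\Nu=Z(\Gamma)$, which is complete by Theorem 6, and checking that $\Psi$ is trivial. Your conclusion is correct, but one intermediate justification is off: to get ${\rm Fix}(\Pi)\subseteq V$ you appeal to ``$\ov{Z(\Gamma)}=\{a+I\in\Gamma: a\in V\}$ together with completeness,'' which only says that every translation of $\Gamma$ with vector in $V$ is central --- the opposite containment from the one you want. The inclusion ${\rm Fix}(\Pi)\subseteq V$ is true, but it requires knowing that the lattice vectors of $\Tau$ lying in ${\rm Fix}(\Pi)$ actually span ${\rm Fix}(\Pi)$ (e.g.\ by averaging a spanning set of lattice vectors over $\Pi$ and invoking Lemma 4(1), or by the dimension count $\dim V=\beta_1=\dim{\rm Fix}(\Pi)$ used in the proof of Theorem 15). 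Fortunately you do not need the equality $V={\rm Fix}(\Pi)$ at all: the easy inclusion $V\subseteq{\rm Fix}(B)$ for every $B\in\Pi$ (immediate from centrality of $Z(\Gamma)$) already shows that any $B\in\Psi$ satisfies ${\rm Fix}(B)\supseteq V\oplus V^\perp=E^n$, hence $B=I$. With that simplification, $\Psi=\{I\}$, $\Mu=\Tau$, and parts (1) and (4) of Theorem 16 give the corollary exactly as you say.
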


\section{Splitting Crystallographic Group Extensions} 

Let $\Nu$ be a complete normal subgroup of an $n$-dimensional space group $\Gamma$, 
let $V = {\rm Span}(\Nu)$, and consider the corresponding space group extension
$$ 1 \to {\rm N}\ {\buildrel i\over \longrightarrow}\ \Gamma\ {\buildrel p\over\longrightarrow} 
\ \Gamma/\Nu\to 1.$$
In this section, we study the relationship between the above space group extension 
splitting ($p$ having a right inverse) and the corresponding fibration projection 
$\eta: E^n/\Gamma \to (E^n/V)/(\Gamma/\Nu)$ having an affine section. 
Here an {\it affine section} of $\eta$ is an affine map 
$\sigma: (E^n/V)/(\Gamma/\Nu) \to E^n/\Gamma$ such that $\eta\sigma$ is the identity map 
of $(E^n/V)/(\Gamma/\Nu)$. A map $\sigma: (E^n/V)/(\Gamma/\Nu) \to E^n/\Gamma$ is {\it affine} 
if $\sigma$ lifts to an affine map $\tilde\sigma: E^n/V \to E^n$ with respect to the 
natural quotient maps. 

\begin{lemma} 
Let $\Nu$ be a complete normal subgroup of an $n$-dimensional space group $\Gamma$, 
and let $V = {\rm Span}(\Nu)$. 
Let $\eta: E^n/\Gamma \to (E^n/V)/(\Gamma/\Nu)$ be the fibration projection determined by $\Nu$. 
If the space group extension 
$$ 1 \to {\rm N}\ {\buildrel i\over \longrightarrow}\ \Gamma\ {\buildrel p\over\longrightarrow} 
\ \Gamma/\Nu\to 1$$
splits, then $\eta$ has an affine section. 
\end{lemma}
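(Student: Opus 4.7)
The plan is to use the splitting to construct an explicit $H$-equivariant affine section $\tilde\sigma:V^\perp\to E^n$ of the orthogonal projection $\pi_V:E^n\to V^\perp$, identifying $E^n/V$ with $V^\perp$ isometrically; such $\tilde\sigma$ descends to the desired affine section $\sigma$ of $\eta$. Write $s:\Gamma/\Nu\to\Gamma$ for the splitting, put $H=s(\Gamma/\Nu)$, so $\Gamma=\Nu H$ and $\Nu\cap H=\{1\}$, and let $\rho:H\to{\rm Isom}(V^\perp)$ denote the induced action.

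I would look for $\tilde\sigma$ in the affine form $\tilde\sigma(w)=w+f_0+Lw$ with $f_0\in V$ and $L:V^\perp\to V$ linear. Decomposing $E^n=V^\perp\oplus V$ and using that the linear part $B$ of any $b+B\in H$ preserves both summands, the $H$-equivariance condition $\tilde\sigma(\rho(h)w)=h\,\tilde\sigma(w)$ reduces to two requirements:
\begin{enumerate}
\item[(i)] $L$ intertwines $B|_{V^\perp}$ and $B|_V$ for every $h=b+B\in H$, i.e.\ $L$ is equivariant under the point group $\Pi_H$ of $H$; and
\item[(ii)] $b_V-Lb_{V^\perp}=(I-B)f_0$ for every $h=b+B\in H$, where $b=b_V+b_{V^\perp}$ is the orthogonal decomposition.
\end{enumerate}

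To build $L$, let $T_H=H\cap \Tau$ be the translations in $H$. The map $T_H\to V^\perp$, $a+I\mapsto a_{V^\perp}$, is injective: if $a\in V$, then completeness of $\Nu$ gives $a+I\in\Nu$, hence $a+I\in\Nu\cap H=\{1\}$. Hence $W:=\{a_{V^\perp}:a+I\in T_H\}$ is a lattice in its $\realnos$-span $U\subseteq V^\perp$, and the rule $L_0(a_{V^\perp}):=a_V$ defines a $\Pi_H$-equivariant $\integers$-linear map $L_0:W\to V$. Extending $\realnos$-linearly to $U$ and then by zero on a $\Pi_H$-invariant orthogonal complement of $U$ in $V^\perp$ (available since $\Pi_H$ is finite, so one may average the inner product) gives the desired $\Pi_H$-equivariant $L:V^\perp\to V$ satisfying $Lb_{V^\perp}=b_V$ whenever $b+I\in T_H$.

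Condition (ii) then becomes a cohomology problem: $\phi'(h):=b_V-Lb_{V^\perp}$ is a $1$-cocycle $H\to V$ (the difference of the cocycles $h\mapsto b_V$ and $h\mapsto Lb_{V^\perp}$, the latter being a cocycle because $L$ is equivariant), and by construction $\phi'|_{T_H}=0$. Since $T_H$ acts trivially on $V$, the cocycle $\phi'$ descends to a $1$-cocycle $\bar\phi':\Pi_H\to V$, and finiteness of $\Pi_H$ together with $V$ being a real vector space yields $H^1(\Pi_H,V)=0$ by the standard averaging argument, producing $f_0\in V$ with $(I-B)f_0=\bar\phi'(B)$. This gives the required $\tilde\sigma$. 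The main obstacle is the construction of $L$: reconciling $\Pi_H$-equivariance on all of $V^\perp$ with the identity $L(a_{V^\perp})=a_V$ on $T_H$ is where completeness of $\Nu$ is essential, for it is precisely this hypothesis that makes $\pi_{V^\perp}$ separate $T_H$ from the translations in $\Nu$ and thereby lets $L_0$ be well-defined.
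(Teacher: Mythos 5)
Your proof is correct, but it reaches the key geometric object --- an $H$-invariant affine $m$-plane transverse to $V$ --- by a genuinely different route from the paper. The paper takes $\Sigma=s(\Gamma/\Nu)$ and invokes Theorem 5.4.6 of Ratcliffe as a black box to obtain a $\Sigma$-invariant $m$-plane $Q$ on which a finite-index free abelian subgroup of $\Sigma$ acts cocompactly by translations; it then proves $V\cap Q=\{0\}$ (using that the translations of $p(\Sigma)$ span $E^n/V$) and defines the section by projecting $E^n/V$ onto $Q$ along $V$. You instead construct $Q$ by hand as the graph of the affine map $w\mapsto f_0+Lw$ from $V^\perp$ to $V$: the equivariant lattice map $L_0$ on $\pi_{V^\perp}(T_H)$, its linear extension $L$, and the vanishing of $H^1(\Pi_H,V)$ for the finite point group together amount to a self-contained proof of exactly the invariant-plane existence that the paper outsources, with transversality to $V$ built into the ansatz rather than verified afterwards; your two conditions (i) and (ii) and the cocycle computations all check out. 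The one step that deserves an explicit word is the assertion that $W=\{a_{V^\perp}:a+I\in T_H\}$ is a lattice in its span: the injectivity of $a\mapsto a_{V^\perp}$ on $T_H$ (which is indeed where completeness of $\Nu$ enters) does not by itself give discreteness, and discreteness is genuinely needed for the $\realnos$-linear extension of $L_0$ to be well defined (a non-discrete finitely generated subgroup of $V^\perp$ could carry $\integers$-linear data incompatible with any $\realnos$-linear map). Discreteness does hold, because $\Nu\cap\Tau$ is a full lattice in $V$, so the image of the whole translation lattice of $\Gamma$ under $\pi_{V^\perp}$ is discrete; equivalently, you may cite Lemma 1, which says $\Gamma/\Nu$ acts discretely on $E^n/V$. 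With that sentence added, your argument is complete.
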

\begin{proof}
Suppose that the space group extension splits. 
Then $\Gamma$ has a subgroup $\Sigma$ such that $p$ maps $\Sigma$ isometrically onto $\Gamma/\Nu$. 
By Theorem 5.4.6 of \cite{R}, the group $\Sigma$ has a free abelian subgroup $\Eta$ 
of rank $m$ and finite index, there is an $m$-plane $Q$ of $E^n$ such that 
$\Eta$ acts effectively on $Q$ as a discrete group of translations, 
and the $m$-plane $Q$ is invariant under $\Sigma$. 
Hence 
$$m = {\rm dim}(\Eta) = {\rm dim}(\Gamma/\Nu) = {\rm dim}(E^n/V).$$
By conjugating $\Gamma$ by a translation, we may assume that $Q$ is a vector subspace of $E^n$. 
Let $a_1+A_1,\ldots, a_m+A_m$ be generators of $\Eta$. 
Then $a_i\in Q$ for each $i$ and $A_i$ fixes $Q$ pointwise for each $i$. 
Let $k$ be the order of the point group of $\Gamma$. 
Then $(a_i+A_i)^k = ka_i+I$ for each $i$. 
Hence, by replacing $\Eta$ by a subgroup of finite index, 
we may assume that $A_i=I$ for each $i$. 

Now $p(a_i+I)$ acts on $E^n/V$ by $\Nu(a_i+I)(V+x) = V+x+a_i$, 
and so $p(a_i+I)$ acts as a translation on $E^n/V$ for each $i$. 
As $p(\Eta)$ has finite index in $\Gamma/\Nu$, 
we have that $p(\Eta)$ has finite index in the translation subgroup of $\Gamma/\Nu$. 
Therefore the vectors $V+a_1,\ldots, V+a_m$ span $E^n/V$. 
Hence the quotient map $\pi:E^n\to E^n/V$ maps $Q$ isomorphically onto $E^n/V$. 
Therefore $V\cap Q = \{0\}$. 

If $x\in E^n$, then $x$ can be written uniquely as $x = x_V+x_Q$ 
with $x_V\in V$ and $x_Q\in Q$. 
Define $\phi:E^n/V\to Q$ by $\phi(V+x) = x_Q$. 
Then $\phi$ is a well-defined linear isomorphism. 

Let $a+A\in \Sigma$. 
Then $a\in Q$ and $A$ leaves both $V$ and $Q$ invariant. 
Observe that 
\begin{eqnarray*}
\phi(p(a+A)(V+x)) & = & \phi(\Nu(a+A)(V+x)) \\
& = & \phi(V+a+Ax) \\
& = & a + Ax_Q \\
& = & (a+A)x_Q \ \ = \ \ (a+A)\phi(V+x). 
\end{eqnarray*}
Hence $\phi$ induces an affine map $\ov\phi:(E^n/V)/(\Gamma/\Nu)\to E^n/\Gamma$ 
whose image is $\Gamma Q/\Gamma$. 
Observe that 
\begin{eqnarray*}
\eta\ov\phi((\Gamma/\Nu)(V+x)) & = & \eta(\Gamma\phi(V+x)) \\
& = & \eta(\Gamma x_Q) \\
& = & (\Gamma/\Nu)(V+x_Q) \ \ = \ \ (\Gamma/\Nu)(V+x). 
\end{eqnarray*}
Therefore $\eta\ov\phi$ is the identity map. 
Thus $\ov\phi$ is an affine section of $\eta$. 
\end{proof}

\begin{lemma} 
Let $\Nu$ be a complete normal subgroup of an $n$-dimensional space group $\Gamma$, 
and let $V = {\rm Span}(\Nu)$. 
Let $\eta: E^n/\Gamma \to (E^n/V)/(\Gamma/\Nu)$ be the fibration projection determined by $\Nu$. 
If $\eta$ has an affine section $\sigma:(E^n/V)/(\Gamma/\Nu)\to E^n/\Gamma$ such that 
${\rm Im}(\sigma)$ intersects a fiber $F_0$ of $\eta$ at an ordinary point $x_0$ of $F_0$, 
then the space group extension 
$ 1 \to {\rm N} \to \Gamma \to \Gamma/\Nu\to 1$
splits. 
\end{lemma}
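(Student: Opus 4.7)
I will produce a homomorphic section $s:\Gamma/\Nu\to\Gamma$ of $p$ from the affine section $\sigma$, with the ordinary point hypothesis used precisely to guarantee uniqueness of $s$ (hence the homomorphism property).

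First, I choose an affine lift $\tilde\sigma:E^n/V\to E^n$ of $\sigma$, so that $q_2\tilde\sigma=\sigma q_1$, where $q_1$ and $q_2$ are the natural quotient maps to $(E^n/V)/(\Gamma/\Nu)$ and $E^n/\Gamma$, respectively. The composite $\tilde\eta\tilde\sigma:E^n/V\to E^n/V$ descends to the identity on $(E^n/V)/(\Gamma/\Nu)$, so at each point it is given by some deck transformation in $\Gamma/\Nu$. Since $E^n/V$ is connected and $\Gamma/\Nu$ acts properly discontinuously by Lemma 1, a single element $\ov\gamma_0\in\Gamma/\Nu$ must work everywhere. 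Replacing $\tilde\sigma$ by $\tilde\sigma\circ\ov\gamma_0^{-1}$, which is still an affine lift of $\sigma$, I may assume $\tilde\eta\tilde\sigma={\rm id}_{E^n/V}$. Then $\tilde\sigma$ is injective, and $Q:=\tilde\sigma(E^n/V)$ is an $m$-dimensional affine subspace of $E^n$ meeting each coset of $V$ in exactly one point.

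Next, by hypothesis $\sigma(y_0)=\Gamma x_0$ for an ordinary point $x_0$ of the fiber $F_0$. Any isometry of $E^n$ fixing $x_0$ preserves $V+x_0$, so the ordinary-point condition reads ${\rm Stab}_\Gamma(x_0)=\{1\}$. Choosing a lift $\ov y_0\in E^n/V$ of $y_0$, the point $q_0:=\tilde\sigma(\ov y_0)\in Q$ lies in the orbit $\Gamma x_0$, so ${\rm Stab}_\Gamma(q_0)$ is also trivial. Hence any $\delta\in\Gamma$ that fixes $Q$ pointwise fixes $q_0$ and must equal the identity.

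Now, for each $\ov\gamma\in\Gamma/\Nu$, the affine map $\tilde\sigma\circ\ov\gamma:E^n/V\to E^n$ is another lift of $\sigma$, so differs from $\tilde\sigma$ by an element of the deck group $\Gamma$ of $q_2$; continuity and connectedness force this difference to be a single $s(\ov\gamma)\in\Gamma$ with $s(\ov\gamma)\circ\tilde\sigma=\tilde\sigma\circ\ov\gamma$, and the previous paragraph makes $s(\ov\gamma)$ unique. From $s(\ov\gamma_1\ov\gamma_2)\circ\tilde\sigma=\tilde\sigma\circ\ov\gamma_1\ov\gamma_2=s(\ov\gamma_1)s(\ov\gamma_2)\circ\tilde\sigma$, uniqueness gives $s(\ov\gamma_1\ov\gamma_2)=s(\ov\gamma_1)s(\ov\gamma_2)$, so $s$ is a homomorphism. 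Applying $\tilde\eta$ to $s(\ov\gamma)\circ\tilde\sigma=\tilde\sigma\circ\ov\gamma$ and using $\tilde\eta\tilde\sigma={\rm id}$ together with the identity $\tilde\eta\gamma=(\Nu\gamma)\tilde\eta$ for $\gamma\in\Gamma$, I obtain $\Nu s(\ov\gamma)=\ov\gamma$, i.e., $p\circ s={\rm id}_{\Gamma/\Nu}$. Hence the extension splits.

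The main obstacle is the uniqueness of $s(\ov\gamma)$ in the construction above. Without the ordinary point hypothesis, the subgroup of $\Gamma$ fixing $Q$ pointwise could be nontrivial (for instance a reflection in an affine subspace containing $Q$), and no consistent continuous choice of $s(\ov\gamma)$ would then yield a group homomorphism.
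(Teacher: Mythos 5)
Your overall architecture---normalize $\tilde\sigma$ so that $\tilde\eta\tilde\sigma={\rm id}$, then define $s(\ov\gamma)$ as the unique element of $\Gamma$ with $s(\ov\gamma)\tilde\sigma=\tilde\sigma\ov\gamma$ and read off the homomorphism property from uniqueness---is a legitimate and rather clean alternative to the paper's argument, which instead takes $\Sigma$ to be the setwise stabilizer of $Q$ and proves $\Nu\cap\Sigma=\{1\}$ and $\Nu\Sigma=\Gamma$ by a local analysis near points of $Q$. But there is a genuine gap in your second paragraph: the ordinary-point hypothesis does \emph{not} say that ${\rm Stab}_\Gamma(x_0)=\{1\}$. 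The fiber $F_0$ is the orbifold $(V+x_0)/\Gamma_{V+x_0}$, whose local group at $x_0$ is the \emph{image} of ${\rm Stab}_\Gamma(x_0)$ in ${\rm Isom}(V+x_0)$; an element of $\Gamma$ may fix $x_0$ and act trivially on $V+x_0$ while being a nontrivial isometry of $E^n$ (a reflection or rotation in the directions transverse to $V$). Concretely, for $\Gamma=\langle t_1,t_2,B\rangle$ with $B={\rm diag}(1,-1)$ and the complete normal subgroup $\Nu=\langle t_1\rangle$, $V={\rm Span}\{e_1\}$, the fiber through $0$ is a circle, so $0$ projects to an ordinary point of it, yet $B\in{\rm Stab}_\Gamma(0)$ is nontrivial. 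So the sentence ``the ordinary-point condition reads ${\rm Stab}_\Gamma(x_0)=\{1\}$'' is false, and with it the stated justification for the uniqueness of $s(\ov\gamma)$ collapses.

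What you actually need---and what is true---is only that the \emph{pointwise} stabilizer of $Q$ in $\Gamma$ is trivial, since that is all the uniqueness of $s(\ov\gamma)$ requires. Proving this takes an extra step, which is exactly the one the paper supplies: if $a+A$ fixes $Q$ pointwise, then $a=0$ (translate $q_0$ to the origin), and since $\tilde\eta$ maps $Q$ onto $E^n/V$ the orthogonal projections of points of $Q$ exhaust $V^\perp$; writing $x=v+w$ with $v\in V$, $w\in V^\perp$ and comparing $x=Ax=Av+Aw$ shows $A$ fixes $V^\perp$ pointwise. Hence $a+A\in\ov\Nu=\Nu$---this is where the completeness of $\Nu$, which your argument never invokes, is essential---and only then does the ordinary-point hypothesis (a statement about the action of $\Gamma_{V+x_0}$ \emph{on the fiber}, on which $\Nu$ does act effectively) force $A=I$. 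With that substitution your proof goes through; the two ``single deck transformation'' steps are also stated too quickly, since the actions here are not free, but they are easily justified for affine maps by a Baire category argument together with the fact that two affine maps agreeing on a nonempty open set agree everywhere.
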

\begin{proof}
By conjugating $\Gamma$ by a translation, 
we may assume that $x_0 = \Gamma 0$. 
Then $\sigma$ lifts to an affine map 
$\tilde\sigma:E^n/V\to E^n$ such that 
${\rm Im}(\tilde\sigma)$ is a vector subspace $Q$ of $E^n$ 
and the following diagram commutes
\[\begin{array}{ccc}
E^n/V & {\buildrel \tilde{\sigma} \over \longrightarrow} & E^n \vspace{.1in}\\ 
\pi_{\Gamma/\Nu}\ \downarrow \hspace{.2in} & & \phantom{\pi_\Gamma}\downarrow\ \pi_\Gamma \vspace{.1in} \\
(E^n/V)/(\Gamma/\Nu) & {\buildrel \sigma \over \longrightarrow} &  E^n/\Gamma
  \end{array}
\] 
where the vertical maps are the quotient maps. 
Now we have 
$$\pi_\Gamma(Q) = \pi_\Gamma(\tilde\sigma(E^n/V)) = 
\sigma(\pi_{\Gamma/\Nu}(E^n/V)) = {\rm Im}(\sigma).$$
Let $\tilde\eta:E^n\to E^n/V$ be the quotient map. 
Then $\pi_{\Gamma/\Nu}\tilde\eta=\eta\pi_\Gamma$, and so 
$$\pi_{\Gamma/\Nu}\tilde\eta(Q) = \eta\pi_\Gamma(Q) = 
\eta({\rm Im}(\sigma)) = (E^n/V)/(\Gamma/\Nu).$$
As $\tilde\eta(Q)$ is a vector subspace of $E^n/V$, 
we deduce that $\tilde\eta(Q) =E^n/V$. 
Therefore $\tilde\eta\tilde\sigma: E^n/V\to E^n/V$ 
is an affine homeomorphism, 
and so $\tilde\sigma:E^n/V\to E^n$ is an affine embedding 
whose image is $Q$. 

Let $\Sigma$ be the stabilizer of $Q$ in $\Gamma$, 
and let $a+A\in\Nu\cap\Sigma$. 
Then $a\in V\cap Q =\{0\}$. 
Hence $A = I$, since $\Gamma 0$ is an ordinary point of 
the fiber $F_0 = \pi_\Gamma(V)$, 
which is isometric to $V/\Gamma_V$. 
Therefore $N\cap \Sigma = \{I\}$. 

Suppose $a+A\in\Sigma$ and $a+A$ fixes $Q$ pointwise. 
Then $a=0$, since $0\in Q$. 
Let $x \in Q$. 
Write $x = v+w$ 
with $v\in V$ and $w\in V^\perp$. 
Then $Ax = Av+Aw$, 
and so $x = Av+Aw$. 
Now $Av\in V$ and $Aw\in V^\perp$, 
and so $Av = v$ and $Aw=w$. 
Hence $A$ fixes $V^\perp$ pointwise, 
since $\tilde\eta(Q)=E^n/V$. 
Therefore $A\in \Nu$. 
Hence $A = I$, since $\Gamma 0$ 
is an ordinary point of $F_0$. 
Thus $\Sigma$ acts effectively on $Q$. 

Suppose $x\in Q$, and $\gamma\in \Gamma$, and $y=\gamma x\in Q$. 
Choose $r>0$ small enough so that 
$B(y,r)\cap B(\alpha y,r)=\emptyset$ unless $\alpha\in\Gamma_y$, 
the stabilizer of $y$ in $\Gamma$. 
Then we have 
$$\pi_\Gamma^{-1}\big(\pi_\Gamma(B(y,r)\cap Q)\big)\cap B(y,r) = 
\mathop{\cup}_{\alpha\in\Gamma_y}\alpha\big(B(y,r)\cap Q\big).$$
We have $\pi_\Gamma(B(y,r)\cap\gamma Q) = \pi_\Gamma(B(y,r)\cap Q)$. 
Therefore, we have 
$$B(y,r)\cap\gamma Q \ \subset \mathop{\cup}_{\alpha\in\Gamma_y}\big(B(y,r)\cap\alpha Q\big).$$
Hence, we have
$$B(y,r)\cap\gamma Q\ = \mathop{\cup}_{\alpha\in\Gamma_y}\big(B(y,r)\cap\alpha Q\cap\gamma Q\big).$$
Therefore $\gamma Q = \alpha Q$ for some $\alpha\in \Gamma_y$, since $\Gamma_y$ is finite. 
Hence $\alpha^{-1}\gamma Q = Q$ and $\alpha^{-1}\gamma x = y$. 
Thus $\alpha^{-1}\gamma \in \Sigma$ and $\pi_\Gamma$ induces an isometry 
from $Q/\Sigma$ to ${\rm Im}(\sigma)$. 

We have a commutative diagram 

\[\begin{array}{ccc}
Q \hspace{-.2in}& {\buildrel \tilde{\eta}_1 \over \longrightarrow} &\!\!\!\!\! E^n/V \vspace{.1in}\\ 
(\pi_\Gamma)_1 \downarrow \phantom{(\pi_\Gamma)_1} \hspace{-.2in}& & \phantom{\pi_{\Gamma/\Nu}}\downarrow\ \pi_{\Gamma/\Nu} \vspace{.1in} \\
{\rm Im}(\sigma) \hspace{-.2in}& {\buildrel \eta_1 \over \longrightarrow} &  (E^n/V)/(\Gamma/\Nu)
  \end{array}
\] 
with $\tilde\eta_1,\eta_1,(\pi_\Gamma)_1$ the restrictions of $\tilde\eta,\eta,\pi_\Gamma$, 
respectively; moreover, $\tilde\eta_1$ and $\eta_1$ are homeomorphisms 
and the fibers of $(\pi_\Gamma)_1$ are the orbits of the action of $\Sigma$ on $Q$. 
Let $x\in Q$ such that $\pi_{\Gamma/\Nu}\tilde\eta_1(x) = \pi_{\Gamma/\Nu}(V+x)$ 
is an ordinary point of $(E^n/V)/(\Gamma/\Nu)$, 
and let $\Nu\gamma\in \Gamma/\Nu$. 
Then there exists $\gamma'\in\Sigma$ such that 
$\tilde\eta_1(\gamma'x) = (\Nu\gamma)\tilde\eta_1(x)$. 
Hence $(\Nu\gamma')(V+x) = (\Nu\gamma)(V+x)$, 
and so $\Nu\gamma'=\Nu\gamma$, 
since $\pi_{\Gamma/\Nu}(V+x)$ is an ordinary point of $(E^n/V)/(\Gamma/\Nu)$. 
Therefore $\Nu\Sigma =\Gamma$. 
Hence, the space group extension 
$$ 1 \to {\rm N}\ {\buildrel i\over \longrightarrow}\ \Gamma\ {\buildrel p\over\longrightarrow} 
\ \Gamma/\Nu\to 1$$
splits with $p$ mapping $\Sigma$ isometrically onto $\Gamma/\Nu$. 
\end{proof}

\begin{theorem} 
Let $\Nu$ be a complete, torsion-free, normal subgroup of an $n$-dimen\-sional space group $\Gamma$, 
and let $V = {\rm Span}(\Nu)$. 
Let $\eta: E^n/\Gamma \to (E^n/V)/(\Gamma/\Nu)$ be the fibration projection determined by $\Nu$. 
Then $\eta$ has an affine section if and only if the space group extension 
$1 \to {\rm N} \to \Gamma \to \Gamma/\Nu\to 1$
splits. 
\end{theorem}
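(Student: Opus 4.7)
The plan is: the ``if'' direction is Lemma 6, so all the work is in the ``only if'' direction. Given an affine section $\sigma:(E^n/V)/(\Gamma/\Nu)\to E^n/\Gamma$, I would invoke Lemma 7, and the only thing to verify is that ${\rm Im}(\sigma)$ meets some fiber of $\eta$ at an ordinary point of that fiber.

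First I would use the torsion-freeness of $\Nu$ to show that $\Nu$ acts freely on every coset $V+x$ of $V$ in $E^n$. Since $\Nu$ is a space group on $V$ by Theorem 2, its point group is finite; so any element $a+A$ of $\Nu$ with a fixed point is conjugate by a translation to a purely linear isometry, whose order is that of $A$ in the finite point group, hence $a+A$ is torsion. Torsion-freeness rules this out, and consequently each generic fiber $(V+x)/\Nu$ (the fibers with $\Gamma_{V+x}=\Nu$, so $G_x$ trivial) is a flat manifold, every point of which is ordinary in the orbifold sense.

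Next I would show that the affine section must meet a generic fiber. The singular set of the base orbifold $B=(E^n/V)/(\Gamma/\Nu)$ is the image under the quotient map of the union of the fixed subspaces of the finite isotropy subgroups of the effective discrete action of $\Gamma/\Nu$ on $E^n/V$ supplied by Lemma 1; this is a locally finite union of proper affine subspaces, so the regular locus of $B$ is dense and open. Since $\sigma$ is a section, $\sigma(y)\in\eta^{-1}(y)$ for every $y\in B$, so picking any $y_0$ in the regular locus yields $\sigma(y_0)$ lying in a generic fiber, and hence $\sigma(y_0)$ is an ordinary point of that fiber. Lemma 7 then produces the splitting of the extension.

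The main obstacle I anticipate is the second step: confirming that ${\rm Im}(\sigma)$ actually hits a generic fiber. This reduces to the essentially local claim that the regular locus of the flat orbifold $B$ is nonempty, which rests on the standard fact that a nontrivial finite orthogonal action on a Euclidean ball has fixed set of positive codimension, so the singular set of $B$ has empty interior. Once this is in hand, Theorem 18 follows formally from Lemmas 6 and 7.
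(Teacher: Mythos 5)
Your proposal is correct and follows essentially the same route as the paper: the forward direction is Lemma 6, and the converse reduces to Lemma 7 after observing that torsion-freeness of ${\rm N}$ forces every point of a generic fiber to be ordinary. The only difference is that you explicitly verify that ${\rm Im}(\sigma)$ meets a generic fiber (via density of the regular locus of the base), a point the paper's proof passes over in silence; this is a harmless and correct elaboration rather than a change of method.
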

\begin{proof}
If the space group extension $1 \to {\rm N} \to \Gamma \to \Gamma/\Nu\to 1$ splits, 
then $\eta$ has an affine section by Lemma 6. 
If $\eta$ has an affine section $\sigma$, 
then ${\rm Im}(\sigma)$ intersects every generic fiber of $\eta$ at an ordinary point, 
since every point of a generic fiber is an ordinary point, because $\Nu$ is torsion-free. 
Therefore, the space group extension $1 \to {\rm N} \to \Gamma \to \Gamma/\Nu\to 1$ splits 
by Lemma 7. 
\end{proof}

\begin{lemma} 
Let $\Nu$ be a complete normal subgroup of an $n$-dimensional space group $\Gamma$, 
and let $V = {\rm Span}(\Nu)$. 
Let $\eta: E^n/\Gamma \to (E^n/V)/(\Gamma/\Nu)$ be the fibration projection determined by $\Nu$. 
If $V/\Nu$ has a point $\Nu v_0$ that is fixed by every isometry of $V/\Nu$,  
then the map $\sigma: (E^n/V)/(\Gamma/\Nu) \to E^n/\Gamma$, defined by 
$\sigma((\Gamma/\Nu)(V+x)) = \Gamma(v_0+x)$ for each $x\in V^\perp$, 
is a section of $\eta$ and an affine isometric embedding. 
\end{lemma}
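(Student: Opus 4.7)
My plan is to verify four properties of $\sigma$ in sequence: well-definedness, the section identity $\eta\sigma={\rm id}$, affineness, and distance preservation. Throughout, I will identify $E^n/V$ isometrically with $V^\perp$ via orthogonal projection, so each element of the base has a canonical representative $(\Gamma/\Nu)(V+x)$ with $x\in V^\perp$.

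The key structural fact I will need is that the map $\Gamma\to{\rm Isom}(V)$ sending $b+B$ to the isometry $v\mapsto b_V + Bv$ descends to a homomorphism $\Gamma/\Nu\to{\rm Isom}(V/\Nu)$, whose image fixes $\Nu v_0$ by hypothesis. Equivalently, for every $\gamma = b+B\in\Gamma$ there is some $\alpha = a+A\in\Nu$ with $a+Av_0 = b_V + Bv_0$. Granting this, well-definedness is routine: if $(\Gamma/\Nu)(V+x) = (\Gamma/\Nu)(V+x')$ with $x,x'\in V^\perp$, some $\gamma = b+B\in\Gamma$ satisfies $x' = b_{V^\perp} + Bx$, and choosing $\alpha$ as above, Theorem 2(2) (so that $A$ fixes $V^\perp$ pointwise) yields $\gamma(v_0+x) = \alpha(v_0+x')$, hence $\Gamma(v_0+x) = \Gamma(v_0+x')$. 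The section identity follows from $v_0\in V$, and affineness is visible from the formula, since the map $V^\perp\to E^n$, $x\mapsto v_0+x$, is an affine isometric embedding onto $v_0+V^\perp$ covering $\sigma$.

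The main work is distance preservation. For $x,y\in V^\perp$ and $\gamma = b+B\in\Gamma$, the orthogonal decomposition yields
$$\|(v_0+x) - \gamma(v_0+y)\|^2 = \|v_0 - b_V - Bv_0\|^2 + \|x - b_{V^\perp} - By\|^2,$$
while the squared base distance is the infimum of only the second summand. Let $\Sigma = \{b+B\in\Gamma : b_V + Bv_0 = v_0\}$ be the stabilizer of $v_0$ in the induced action on $V$; the hypothesis forces $\Gamma = \Nu\Sigma$, since given $\gamma$ with associated $\alpha$ as above, $\alpha^{-1}\gamma\in\Sigma$. Factoring $\gamma = \alpha\gamma' = (a+A)(b'+B')$ and using Theorem 2(2) to see that $A$ acts as the identity on $V^\perp$, one checks that the $V^\perp$-component $b_{V^\perp}+By$ equals $(b')_{V^\perp} + B'y$, independent of $\alpha$; hence the base infimum over $\Gamma$ agrees with the same infimum over $\Sigma$. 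Meanwhile, for $\gamma'\in\Sigma$ the $V$-component $v_0-(b')_V-B'v_0$ vanishes, so restricting the top infimum to $\Sigma$ returns exactly the base infimum. Combined with the trivial inequality that the infimum over $\Gamma\supseteq\Sigma$ is no larger, equality of distances follows and $\sigma$ is an isometric embedding.

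The main obstacle is this distance-preservation step. It succeeds only because the fixed-point hypothesis and Theorem 2(2) cooperate: the hypothesis delivers the factorization $\Gamma = \Nu\Sigma$, which annihilates the $V$-component contribution, while Theorem 2(2) guarantees that the factor $\alpha\in\Nu$ acts trivially on the $V^\perp$-component, so the choice of $\alpha$ does not interfere with the base-distance minimization.
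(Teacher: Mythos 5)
Your proposal is correct and follows essentially the same route as the paper: both arguments hinge on producing, for each $\gamma = b+B\in\Gamma$, an element $\alpha\in\Nu$ whose action on $v_0$ matches the $V$-component of $\gamma$'s action (the paper extracts this from the isometries $(b+B)_\ast$ and $(d+I)_\ast$ of $V/\Nu\to (V+d)/\Nu$, you from the induced homomorphism $\Gamma/\Nu\to{\rm Isom}(V/\Nu)$), which is exactly the equivariance of the lift $\tilde\sigma(V+x)=v_0+x$. Your factorization $\Gamma=\Nu\Sigma$ and the explicit Pythagorean computation merely spell out the isometric-embedding claim that the paper leaves implicit at the end of its proof.
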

\begin{proof}
Let $b+B\in\Gamma$. 
Write $b= c+d$ with $c\in V$ and $d\in V^\perp$. 
Then $(b+B)V = V+d$. 
Let $a+A\in\Nu$ and let $v\in V$. 
Then we have 
$$(b+B)((a+A)v) = (b+B)(a+A)(b+B)^{-1}(b+B)v= (a'+A')((b+B)v)$$
with $a'+A'\in \Nu$, because $\Nu$ is a normal subgroup of $\Gamma$. 
Hence $b+B$ induces an isometry $(b+B)_\ast$ from $V/\Nu$ to $(V+d)/\Nu$ 
defined by $(b+B)_\ast(\Nu v) = \Nu(b+Bv)$. 
Now $(b+B)_\ast(\Nu v_0) = \Nu(b+Bv_0)$, 
and so $\Nu(b+Bv_0)$ is fixed by every isometry of $(V+d)/\Nu$. 

Next, we have $(d+I)V = V+d$ and $(d+I)((a+A)v) = (a+A)((d+I)v)$. 
Hence $d+I$ induces an isometry $(d+I)_\ast$ from $V/\Nu$ to $(V+d)/\Nu$ 
defined by $(d+I)_\ast(\Nu v) = \Nu(v+d)$. 
Now $(d+I)_\ast(\Nu v_0) = \Nu(v_0+d)$. 
Hence we have 
$$(d+I)_\ast(b+B)_\ast^{-1}(\Nu(b+Bv_0)) = \Nu(v_0+d),$$
and so $\Nu(v_0+d) = \Nu(b+Bv_0)$. 
Therefore there is an $a+A\in\Nu$ such that $(a+A)(b+Bv_0) = v_0+d$. 
Hence $a+Ac+ABv_0 = v_0$. 

The map $\tilde\sigma:E^n/V\to E^n$, defined by $\tilde\sigma(V+x) = v_0+x$ 
for each $x\in V^\perp$, is an affine isometric embedding. 
Observe that 
\begin{eqnarray*}
\tilde\sigma((b+B)(V+x)) 
& = & \tilde\sigma(V+d+Bx) \\
& = & v_0+d+Bx \\
& = & a+Ac+ABv_0+d+Bx \\
& = & a+Ac+ABv_0+Ad+ABx \\
& = & (a+Ab+AB)(v_0+x) \ \ 
 = \ \ (a+A)(b+B)\tilde\sigma(V+x).
\end{eqnarray*}
Hence $\tilde\sigma$ induces a map $\sigma:(E^n/V)/(\Gamma/\Nu)\to E^n/\Gamma$ 
defined by $\sigma((\Gamma/\Nu)(V+x)) = \Gamma(v_0+x)$ 
for each $x\in V^\perp$. 
Now we have 
$$\eta\sigma((\Gamma/\Nu)(V+x)) = \eta(\Gamma(v_0+x)) = (\Gamma/\Nu)(V+x),$$
and so $\sigma$ is a section of $\eta$ and an affine isometric embedding. 
\end{proof}

\begin{theorem} 
Let $\Nu$ be a complete normal subgroup of an $n$-dimensional space group $\Gamma$, 
and let $V = {\rm Span}(\Nu)$. 
If $V/\Nu$ has an ordinary point $\Nu v_0$ that is fixed by every isometry of $V/\Nu$,  
then the space group extension $1\to\Nu\to \Gamma\to \Gamma/\Nu\to 1$ splits. 
\end{theorem}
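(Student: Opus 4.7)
The plan is to combine Lemma 8 with Lemma 7. Lemma 8, applied to the hypothesis, manufactures an explicit affine section
$$\sigma:(E^n/V)/(\Gamma/\Nu)\to E^n/\Gamma,\qquad \sigma((\Gamma/\Nu)(V+x))=\Gamma(v_0+x),$$
of the fibration projection $\eta$. Lemma 7, in turn, converts an affine section whose image meets some fiber $F_0$ at a point ordinary in $F_0$ into a splitting of the extension. So the task reduces to exhibiting a fiber of $\eta$ in which $\sigma$ meets an ordinary point.

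First I would invoke Lemma 8 to obtain $\sigma$. Then I would single out a generic fiber. By Lemma 1, $\Gamma/\Nu$ acts effectively on $E^n/V$ as a discrete group of isometries, and by Theorem 4 this action is that of an $m$-dimensional space group, since the quotient $(E^n/V)/(\Gamma/\Nu)$ is a compact flat $m$-orbifold with finite isotropy groups $G_x=\Gamma_{V+x}/\Nu$. Consequently, the set of $x\in V^\perp$ with trivial isotropy $\Gamma_{V+x}=\Nu$ is open and dense in $V^\perp$. Fix any such $x$; then $F_x=\eta^{-1}((\Gamma/\Nu)(V+x))$ is canonically isometric to $(V+x)/\Nu$ and is a generic fiber of $\eta$.

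Next I would check that the unique intersection point $\Gamma(v_0+x)\in{\rm Im}(\sigma)\cap F_x$ is ordinary in $F_x$. For any $a+A\in\Nu$, Theorem 2 gives $a\in V$ and $V^\perp\subseteq{\rm Fix}(A)$, so $Ax=x$, and therefore
$$(a+A)(v_0+x)=v_0+x \quad\Longleftrightarrow\quad (a+A)v_0=v_0.$$
Hence the stabilizer of $v_0+x$ in $\Nu$ coincides with the stabilizer of $v_0$ in $\Nu$, which is trivial by the hypothesis that $\Nu v_0$ is an ordinary point of $V/\Nu$. Lemma 7 then delivers the splitting of $1\to\Nu\to\Gamma\to\Gamma/\Nu\to 1$.

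I anticipate no essential obstacle, since Lemmas 7 and 8 have already carried the weight. The only subtle point is the passage to a generic fiber: the hypothesis places $\Nu v_0$ ordinary only in the generic fiber model $V/\Nu$, whereas Lemma 7 requires ordinariness in the possibly singular fiber $(V+x)/\Gamma_{V+x}$. Choosing $x$ with $\Gamma_{V+x}=\Nu$ collapses the distinction and closes the proof.
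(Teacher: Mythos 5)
Your proposal is correct and follows essentially the same route as the paper: Lemma 8 produces the affine section, and Lemma 7 converts it into a splitting once the section is seen to meet a generic fiber at an ordinary point. The paper achieves that last step by conjugating $\Gamma$ so that $\Gamma V/\Gamma$ is itself a generic fiber, whereas you work at a generic coset $V+x$ and verify ordinariness there by the stabilizer computation $(a+A)(v_0+x)=v_0+x \Leftrightarrow (a+A)v_0=v_0$; the difference is only cosmetic.
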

\begin{proof}
By conjugating $\Gamma$, we may assume that $\Gamma V/\Gamma$ is a generic fiber of $\eta$. 
Then $\Gamma V/\Gamma$ is isometric to $V/\Nu$. 
By Lemma 8, the fibration projection $\eta: E^n/\Gamma\to (E^n/V)/(\Gamma/\Nu)$ 
has an affine section $\sigma$ such that ${\rm Im}(\sigma)$ intersects the fiber $\Gamma V/\Gamma$ 
in the ordinary point $\Gamma v_0$. 
Hence the space group extension $1\to\Nu\to \Gamma\to \Gamma/\Nu\to 1$ splits by Lemma 7. 
\end{proof}

For example, consider a space group extension $1\to\Nu\to\Gamma\to\Gamma/\Nu\to1$ 
such that $\Nu$ is an infinite dihedral group. Then $V/\Nu$ is a closed interval. 
The midpoint of the closed interval $V/\Nu$ is an ordinary point of $V/\Nu$ 
that is fixed by the nonidentity isometry of $V/\Nu$. 
Hence the space group extension $1\to\Nu\to\Gamma\to\Gamma/\Nu\to1$ splits by Theorem 18. 

We next consider an example that shows that the hypothesis that $\Nu v_0$ is an ordinary point 
cannot be dropped in Theorem 18. 
Let $\Gamma$ be the 3-dimensional space group with IT number 113 in Table 1B of \cite{B-Z}. 
Then $\Gamma = \langle t_1,t_2,t_3,\alpha,\beta,\gamma\rangle$ 
where $t_i = e_i+I$ for $i=1,2,3$ are the standard translations, 
and $\alpha = \frac{1}{2}e_1+\frac{1}{2}e_2+A$, 
$\beta=\frac{1}{2}e_1+B$, $\gamma =\frac{1}{2}e_2+C$, and 
$$A = \left(\begin{array}{rrr} -1 & 0 & 0\\ 0 & -1 & 0 \\ 0 & 0 & 1  \end{array}\right),\ \ 
B = \left(\begin{array}{rrr} 0 & 1 & 0  \\ -1 & 0 & 0   \\ 0 & 0 & -1 \end{array}\right), \ \ 
C = \left(\begin{array}{rrr} -1 & 0 & 0  \\ 0 & 1 & 0   \\ 0 & 0 & -1 \end{array}\right).$$
The group $\Nu = \langle t_1,t_2,\alpha,\beta\gamma\rangle$ is a complete normal subgroup 
of $\Gamma$ with $V = {\rm Span}(\Nu) = {\rm Span}\{e_1,e_2\}$. 
The isomorphism type of $\Nu$ is $2\!\ast\!22$ in Conway's notation \cite{Conway} 
or $cmm$ in the international notation \cite{S}. 
The flat orbifold $V/\Nu$ is a pointed hood.  
The orbifold $V/\Nu$ has a unique cone point $\Nu(\frac{1}{4}e_1+\frac{1}{4}e_2)$ 
which corresponds to the fixed point $\frac{1}{4}e_1+\frac{1}{4}e_2$ of the halfturn $\alpha$. 
Hence the cone point of $V/\Nu$ is fixed by every isometry of $V/\Nu$. 
Therefore the fibration projection $\eta:E^3/\Gamma\to(E^3/V)/(\Gamma/\Nu)$ 
has an isometric section by Lemma 8. 
However the space group extension $1\to \Nu\to \Gamma\to \Gamma/\Nu\to 1$ 
does not split, since $\gamma$ projects to an element of order 2 in $\Gamma/\Nu$, 
but $((d+D)\gamma)^2\neq I$ for all $d+D$ in $\Nu$. 
To see why, observe that there are only four possibilities for $D$, namely $D=I, A, BC, ABC$. 
Suppose $D=I$. Then $d=ke_1+\ell e_2$ for some integers $k$ and $\ell$, and we have
$$\big((d+D)\gamma\big)^2 = (1+2\ell)e_2+I \neq I.$$
The proofs of the other three cases for $D$ are similar. 
This example also shows that the hypothesis that $\Nu$ is torsion-free 
cannot be dropped in Theorem 17.

\section{Seifert Fibrations} 

We call a geometric fibration of a flat $n$-orbifold $M$ over a flat $(n-1)$-orbifold $B$ 
with generic fiber a connected, compact, flat 1-orbifold $F$ a {\it geometric Seifert fibration}. 
Here $F$ is either a circle or a closed interval. 
Seifert fibrations of compact flat 3-orbifolds have been 
studied by Bonahon and Siebenmann \cite{B-S}, Dunbar \cite{Dunbar}, 
and Conway et al. \cite{C-T}. 
Every Seifert fibration of a compact flat 2- or 3-orbifold 
is isotopic to a geometric Seifert fibration   
by Proposition 2.12 of Boileau, Maillot, and Porti \cite{B-M-P} 
and the discussion in \cite{B-S}. 

Let $F$ be a connected, compact, flat 1-orbifold, and let $B$ be a flat $(n-1)$-orbifold $B$. 
Then $F\times B$ is a flat $n$-orbifold. 
The natural projection of $F\times B$ onto $B$ is a geometric Seifert fibration 
over $B$ with generic fiber $F$.  

Let $\Iota$ be a closed interval,  
let $\hat B$ be a connected, complete, flat $(n-1)$-orbifold, 
and let $\sigma$ be an involution of $\Iota\times \hat B$ 
which acts diagonally, as a reflection on $\Iota$, and effectively and 
isometrically on $\hat B$. 
Let $M = (\Iota\times \hat B)/\langle \sigma\rangle$. 
Then $M$ is a flat $n$-orbifold and $B = \hat B/\langle \sigma\rangle$ 
is a flat $(n-1)$-orbifold. 
The natural projection of $M$ onto $B$ 
is a geometric Seifert fibration over $B$ 
with generic fiber $\Iota$. The flat orbifold $M$ is called 
the {\it twisted $\Iota$-bundle} over $B$ 
determined by the orbifold double cover $\hat B$ of $B$. 

\begin{theorem} 
Let $M$ be a connected, complete, flat $n$-orbifold, let $B$ be a flat $(n-1)$-orbifold, 
and let $\eta:M\to B$ be a geometric Seifert fibration with generic fiber a closed interval $\Iota$. 
Let $\dot{\Iota}$ be the set of endpoints of $\Iota$, 
let $\dot{M}$ be the the union of all the endpoints of the fibers of $\eta$ 
that are determined by the endpoints of $\Iota$, 
and let $\dot{\eta}:\dot{M} \to B$ be the restriction of $\eta$. 
Then $\dot{M}$ is a complete, flat $(n-1)$-orbifold, and  
$\dot{\eta}$ is a geometric fibration over $B$ with generic fiber 
the flat $0$-orbifold $\dot{\Iota}$. 
If $\dot{M}$ is disconnected, then $\eta$ is isometrically equivalent to the natural 
projection $\Iota \times B \to B$. 
If $\dot{M}$ is connected, then $\eta$ is isometrically equivalent 
to the natural projection of the twisted $\Iota$-bundle over $B$ determined 
by the orbifold double covering $\dot{\eta}: \dot{M} \to B$. 
\end{theorem}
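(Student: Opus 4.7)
The plan is to first establish the orbifold structure of $\dot{M}$ and $\dot{\eta}$ by local analysis, then to dichotomize on whether the local stabilizer actions fix or swap the two endpoints of the generic fiber, and finally to construct the global trivialization or twisted bundle isomorphism from this local data.

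First I would exploit the local model of $\eta$: on a saturated neighborhood of each $y \in B$, the fibration is isometrically equivalent to the natural projection $(\Iota \times B_y)/G_y \to B_y/G_y$ with $G_y$ finite, acting diagonally, isometrically on $\Iota$, and effectively and orthogonally on $B_y$. Any isometry of the closed interval $\Iota$ permutes its two endpoints $\dot{\Iota}$, so $G_y$ acts on the 2-point set $\dot{\Iota}$; let $G_y^+$ be the kernel of this action, so $[G_y : G_y^+] \in \{1,2\}$. Restricting the local model to endpoints gives the local description $(\dot{\Iota} \times B_y)/G_y$ of $\dot{M}$ near $\dot{\eta}^{-1}(y)$, which is either two disjoint copies of $B_y/G_y^+$ (when $G_y = G_y^+$) or a single copy $B_y/G_y^+$ (when $[G_y:G_y^+] = 2$). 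In either case $\dot{M}$ is locally a flat $(n-1)$-orbifold, these pieces patch via the transition data inherited from $\eta$, and $\dot{\eta}$ is globally a geometric fibration over $B$ with generic fiber $\dot{\Iota}$. Completeness of $\dot{M}$ follows from completeness of $M$ (and hence of $B$), since $\dot{\eta}$ is a finite orbifold covering over each local chart.

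Next I would separate the two global cases via a monodromy argument. From the local description, the two endpoints of the fiber can be coherently labeled across a neighborhood of $y$ precisely when $G_y = G_y^+$. Hence $\dot{M}$ is disconnected iff $G_y = G_y^+$ for every $y$, and is connected otherwise (using connectedness of $M$ and $B$). In the disconnected case $\dot{M}$ has exactly two components $\dot{M}_0, \dot{M}_1$, and $\dot{\eta}$ restricts to an isometry of each onto $B$. With $\Iota$ parametrized as $[0,\ell]$, I would define $\Phi : \Iota \times B \to M$ by sending $(t,b)$ to the point on the fiber $\eta^{-1}(b)$ at distance $t$ from the endpoint $(\dot{\eta}|_{\dot{M}_0})^{-1}(b)$. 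The local models show $\Phi$ is an isometry intertwining the projections, so $\eta$ is equivalent to the natural projection $\Iota \times B \to B$. In the connected case, $\hat{B} := \dot{M}$ is an orbifold double cover of $B$; let $\sigma_0$ be its nontrivial deck involution and let $\sigma$ be the involution of $\Iota \times \hat{B}$ acting as reflection on $\Iota$ and as $\sigma_0$ on $\hat{B}$. I would define $\Phi : \Iota \times \hat{B} \to M$ by sending $(t,\hat{b})$ to the point at distance $t$ from $\hat{b}$ along the fiber $\eta^{-1}(\dot{\eta}(\hat{b}))$. Traversing a fiber from the opposite endpoint by the complementary distance hits the same point, so $\Phi \circ \sigma = \Phi$; the induced map $(\Iota \times \hat{B})/\langle\sigma\rangle \to M$ is then verified via the local models to be an isometry compatible with the projections, exhibiting $\eta$ as the natural projection of the twisted $\Iota$-bundle determined by $\dot{\eta}$.

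The main obstacle will be the global well-definedness of $\Phi$: each local model admits two possible identifications of $\Iota$ with the fiber interval (differing by the endpoint-swap), and these must be chosen coherently across overlapping charts. In the disconnected case this coherence is supplied by globally selecting the component $\dot{M}_0$, which orients every fiber simultaneously because no local $G_y$ swaps the endpoints. In the connected case the analogous obstruction is precisely the monodromy of fiber reversal around loops in $B$; this monodromy is exactly the deck involution $\sigma_0$ of $\hat{B} \to B$, which is why quotienting $\Iota \times \hat{B}$ by $\sigma$ removes the ambiguity and yields a well-defined global map. Once the labeling choice is fixed, everything reduces to straightforward verifications in the local model, so all the genuine content lies in organising this patching.
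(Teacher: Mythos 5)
Your proposal follows essentially the same route as the paper's proof: read off the structure of $\dot{M}$ and $\dot{\eta}$ from the local model, split on connectedness of $\dot{M}$, and build the product (resp.\ twisted product) structure by parametrizing each fiber by arclength from a chosen endpoint, with the deck involution of $\dot{M}\to B$ supplying the twist. Two small points to repair: the asserted equivalence ``$\dot{M}$ is disconnected iff $G_y = G_y^+$ for every $y$'' is false (the boundary of the flat M\"obius band fibered over a circle with fiber $\Iota$ is connected even though every $G_y$ is trivial --- the obstruction is global monodromy, as you yourself note later), though your argument never actually uses the false direction; and in the connected case ``the point at distance $t$ from $\hat{b}$ along the fiber'' does not exist on a folded singular fiber when $t>\ell/2$, so you need the paper's piecewise convention (distance $t$ from $\hat{b}$ for $t\le \ell/2$, distance $\ell-t$ from $\tau(\hat{b})$ for $t\ge \ell/2$) to make $\Phi$ well defined.
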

\begin{proof}
That $\dot{M}$ is a flat $(n-1)$-orbifold and $\dot{\eta}$ is a geometric fibration over $B$ 
with generic fiber $\dot{\Iota}$ follows from the definition of the geometric fibration $\eta$. 
The orbifold $\dot{M}$ is complete,  
since $\dot{M}$ is a closed subspace of the complete metric space $M$. 
Let $B_{1/2}$ be the union of all the points of the fibers of $\eta$ 
that are determined by the midpoint of $\Iota$. 
Then $B_{1/2}$ is a flat $(n-1)$-orbifold, and $\eta$ maps $B_{1/2}$ isometrically onto $B$. 
The geometric fibration $\dot{\eta}:\dot{M}\to B$ is geometrically equivalent 
to the fiberwise projection from $\dot{M}$ to $B_{1/2}$.  

Suppose $\dot{M}$ is disconnected. 
Then $\dot{M}$ has exactly two connected components $B_0$ and $B_1$, 
and $\dot{\eta}$ maps $B_i$ isometrically onto $B$ for each $i$, 
since $\dot{\eta}:\dot{M} \to B$ is an orbifold double covering and $B$ is connected. 
Hence $B_{1/2}$ is two-sided in $M$. 
Parameterize $\Iota$ so that $\Iota = [0,\ell]$. 
Define $\phi: \Iota\times B \to M$ by $\phi(t,y) = y_t$ 
where $y_t$ is the point in $\eta^{-1}(y)$ 
which is at a distance $t$ from $B_0$ along $\eta^{-1}(y)$ towards $B_1$.  
Then $\phi$ is an isometry, and $\eta$ is isometrically equivalent 
to the natural projection $\Iota\times B\to B$. 

Suppose $\dot{M}$ is connected. 
Then $B_{1/2}$ is one-sided in $M$. 
Let $\tau:\dot{M}\to \dot{M}$ be the continuous involution 
that transposes the endpoints of the fibers of $\eta$ that are isometric to $\Iota$. 
Then $\tau$ is an isometry. 
Note that $x$ is fixed by $\tau$ if and only if $\eta^{-1}(\eta(x))$ 
is a singular fiber isometric to $\Iota$ folded in half. 
The geometric fibration $\dot{\eta}: \dot{M}\to B$ 
induces an isometry from $\dot{M}/\langle \tau\rangle$ to $B$.  
Define $\phi: \Iota\times \dot{M} \to M$ 
by $\phi(t,x) = x_t$ where $x_t$ is the point of $\eta^{-1}(\eta(x))$ 
which is at a distance $t$ from $x$ if $t\leq \ell/2$ or at a distance $\ell-t$ from $\tau(x)$ 
if $t\geq \ell/2$. 
Let $\sigma: \Iota\times \dot{M}$ be the isometry 
that acts diagonally, as the reflection in $\Iota$, and by $\tau$ on $\dot{M}$. 
Then $\phi$ induces an isometry from $(\Iota\times \dot{M})/\langle\sigma\rangle$ to $M$, 
and $\eta$ is isometrically equivalent 
to the natural projection of the twisted $\Iota$-bundle over $B$ determined 
by the orbifold double covering $\dot{\eta}: \dot{M} \to B$. 
\end{proof}

\begin{theorem} 
If $1 \to {\rm N}\ {\buildrel i\over \longrightarrow}\ \Gamma\ {\buildrel p\over\longrightarrow} 
\ \Gamma/\Nu\to 1$ is a space group extension such that $\Nu$ is an infinite dihedral group,  
then $\Gamma$ has a subgroup $\Sigma$ such that $p$ maps $\Sigma$ isomorphically onto $\Gamma/\Nu$,  
and either $\Gamma = \Nu\times\Sigma$ and $\Sigma$ is unique and orthogonal to $\Nu$,  
or else $\Sigma$ has a subgroup $\Sigma_0$ of index 2 
such that if $\Gamma_0 = \Nu\Sigma_0$, then $\Gamma_0=\Nu\times \Sigma_0$ 
and $\Sigma_0$ is unique, but $\Sigma$ is not necessarily unique; 
moreover $\Sigma_0$ is a complete normal subgroup of $\Gamma$, which is orthogonal to $\Nu$, 
and $\Gamma/\Sigma_0$ is an infinite dihedral group.  
\end{theorem}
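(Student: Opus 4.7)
The plan exploits that $Z(\Nu) = 1$ and ${\rm Out}(\Nu) \cong \mathbb{Z}/2$ for $\Nu$ infinite dihedral. First, I would obtain the splitting from Theorem 18: the quotient $V/\Nu$ is a closed interval whose midpoint is an ordinary point fixed by the unique nontrivial isometry of $V/\Nu$, exactly as in the example following Theorem 18. Thus there exists $\Sigma \leq \Gamma$ with $p|_\Sigma$ an isomorphism onto $\Gamma/\Nu$, and after conjugating $\Gamma$ by a translation we may take the fixed point $v_0 = 0$, so that the section produced by Lemma 8 gives $\Sigma = \{b+B \in \Gamma : b \in V^\perp\}$.

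Next I would study the canonical centralizer $C := C_\Gamma(\Nu)$. Using a translation of $\Nu$ and a reflection of $\Nu$, one finds $C = \{b+B \in \Gamma : b \in V^\perp\ \hbox{and}\ B|_V = I\}$, so $C \subseteq \Sigma$. Since $Z(\Nu) = 1$, we have $C \cap \Nu = 1$; since $\Nu \triangleleft \Gamma$, also $C \triangleleft \Gamma$; and since $C$ commutes with $\Nu$, the product $\Gamma_0 := \Nu C$ is the internal direct product $\Nu \times C$. The conjugation homomorphism $\Gamma \to {\rm Out}(\Nu) \cong \mathbb{Z}/2$ has kernel exactly $\Gamma_0$, so $[\Gamma : \Gamma_0] \in \{1,2\}$, producing the two cases of the theorem. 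In the case $\Gamma = \Gamma_0$, set $\Sigma := C$; then $\Gamma = \Nu \times \Sigma$, and any other orthogonal complement $\Sigma'$ of $\Nu$ is normal in $\Gamma$ with $\Sigma' \cap \Nu = 1$, forcing $[\Nu,\Sigma'] \subseteq \Nu \cap \Sigma' = 1$ and hence $\Sigma' \subseteq C$; writing any $c \in C$ as $c = \nu\sigma'$ with $\nu \in \Nu$, $\sigma' \in \Sigma'$ yields $\nu = c\sigma'^{-1} \in C \cap \Nu = 1$, so $C \subseteq \Sigma'$ and $\Sigma' = C$. In the case $[\Gamma:\Gamma_0] = 2$, set $\Sigma_0 := C$; then $\Sigma_0 \subseteq \Sigma$ has index $2$ (equality would force $\Sigma \subseteq \Gamma_0$, whence $\Gamma = \Sigma\Nu \subseteq \Gamma_0$), and $\Gamma_0 = \Nu\Sigma_0 = \Nu \times \Sigma_0$; uniqueness of $\Sigma_0$ follows from the same commutator argument applied inside $\Gamma_0$. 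Non-uniqueness of $\Sigma$: for any translation $\nu_0 \in \Nu \cap {\rm T}$ and any $\sigma \in \Sigma \setminus \Sigma_0$, the element $\sigma' := \nu_0\sigma$ satisfies $(\sigma')^2 = \sigma^2 \in \Sigma_0$, because $\sigma$'s orthogonal part acts as $-I$ on $V$ and so inverts $\nu_0$, producing a distinct splitting $\Sigma_0 \cup \Sigma_0\sigma'$.

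Completeness and orthogonality of $\Sigma_0$ follow from the characterization of $C$: the restriction of $\Sigma_0$ to $V^\perp$ embeds it as an $(n-1)$-dimensional space group, whose translation lattice ${\rm T}\cap V^\perp$ must then span $V^\perp$, so ${\rm Span}(\Sigma_0) = V^\perp$ and $\ov\Sigma_0 = \{b+B\in\Gamma : b \in V^\perp,\ V\subseteq {\rm Fix}(B)\} = C = \Sigma_0$. The main obstacle will be identifying $\Gamma/\Sigma_0$ as infinite dihedral in the second case. For this, pick $\sigma \in \Sigma \setminus \Sigma_0$; its image $u$ in $\Gamma/\Sigma_0$ satisfies $u^2 = 1$ because $\sigma^2$ has $(B^2)|_V = I$ and hence lies in $\Sigma_0$. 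For any reflection $s \in \Nu$, a direct computation gives $(us)^2$ equal to the generator of the translation subgroup of $\Nu$, which has infinite order. Since $\Nu$ is generated by $s$ together with this translation, $\Gamma/\Sigma_0$ is generated by $s$ and $u$ with only the relations $s^2 = u^2 = 1$, so $\Gamma/\Sigma_0 \cong \mathbb{Z}/2 \ast \mathbb{Z}/2$, the infinite dihedral group.
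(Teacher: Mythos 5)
Your route is genuinely different from the paper's. The paper deduces the dichotomy geometrically from Theorem 19: the interval\--fibered orbifold $E^n/\Gamma$ is either a product $\Iota\times\dot{M}$ or a twisted $\Iota$-bundle according as $\dot{M}$ is disconnected or connected, and $\Sigma$, $\Sigma_0$ are then identified as centralizers after the fact. You instead run everything through $C=C_\Gamma(\Nu)$ and the homomorphism $\Gamma\to{\rm Out}(\Nu)\cong\integers/2$ with kernel $\Nu C$. Most of this is sound and arguably cleaner: the computation $C=\{b+B\in\Gamma: b\in V^\perp,\ B|_V=I\}$, the internal direct product $\Gamma_0=\Nu\times C$, the index dichotomy, both uniqueness arguments, the non-uniqueness construction $\Sigma_0\cup\Sigma_0\nu_0\sigma$, and the completeness/orthogonality of $\Sigma_0$ are all correct. (Two small points: for $[\Sigma:\Sigma_0]=2$ you only rule out index $1$; add that $\Sigma\cap\Gamma_0=\Sigma_0$, which follows from injectivity of $p|_\Sigma$, to get the upper bound. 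Also note that $\Sigma_0$ being normal in $\Gamma$ is needed for your coset construction $\Sigma_0\cup\Sigma_0\nu_0\sigma$ to be a subgroup; you have it, since $\Sigma_0=C$ is the centralizer of a normal subgroup.)

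The genuine gap is in the last step, the identification of $\Gamma/\Sigma_0$ as infinite dihedral. Write $s=c+R$ with $c\in V$, $R|_V=-I$, $V^\perp\subseteq{\rm Fix}(R)$, and $\sigma=b+B$ with $b\in V^\perp$, $B|_V=-I$. Then $(\sigma s)^2=\sigma^2\cdot(\sigma^{-1}s\sigma)s=\sigma^2\,(-2c+I)$ with $\sigma^2\in\Sigma_0$, so $(\bar{u}\bar{s})^2$ is the class of $-2c+I$, \emph{not} of a generator of $\Nu\cap\Tau$. Since $v_0=0$ is an ordinary point of $V/\Nu$ we do have $c\neq 0$, so $\bar{u}\bar{s}$ has infinite order; but $-2c+I$ is a proper power of the generator of $\Nu\cap\Tau$ whenever the mirror of $s$ is not adjacent to $v_0$, and then $\langle\bar{s},\bar{u}\rangle$ is a proper subgroup of $\Gamma/\Sigma_0$, so your presentation argument does not capture the whole quotient. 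Two repairs: (i) choose $s$ to be the reflection in a mirror adjacent to $v_0$, so that $-2c+I$ generates $\Nu\cap\Tau$ and hence $\langle s,\,-2c+I\rangle=\Nu$ and $\langle\bar{s},\bar{u}\rangle=\Gamma/\Sigma_0$; or, more robustly, (ii) use the completeness of $\Sigma_0$ you have already established: $\Gamma/\Sigma_0$ then acts effectively as a discrete cocompact group of isometries of the line $E^n/V^\perp$, hence is infinite cyclic or infinite dihedral, and it contains the involution $\bar{s}$, so it is infinite dihedral.
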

\begin{proof}
Let $V = {\rm Span}(\Nu)$, and let $n={\rm dim}(\Gamma)$. 
Then ${\rm I} = V/\Nu$ is a closed interval and the fibration projection 
$\eta:E^n/\Gamma\to(E^n/V)/(\Gamma/\Nu)$ is a geometric Seifert fibration 
with generic fiber ${\rm I}$. 
Let $M=E^n/\Gamma$.  By Theorem 18, the group $\Gamma$ has a subgroup $\Sigma$ 
such that $p$ maps $\Sigma$ isomorphically onto $\Gamma/\Nu$, 
and by Theorem 19, either $\Gamma = \Nu\times\Sigma$, with $\Sigma$ orthogonal to $\Nu$,  
if $\dot{M}$ is disconnected, 
or else $\Sigma$ has a subgroup $\Sigma_0$ of index 2, if $\dot{M}$ is connected, 
corresponding to the orbifold double cover $\dot{M}$ of $(E^n/V)/(\Gamma/\Nu)$, 
such that if $\Gamma_0 = \Nu\Sigma_0$, then $\Gamma_0=\Nu \times \Sigma_0$, 
since ${\rm I}\times \dot{M}$ double covers $M$. 
If $\Gamma = \Nu\times\Sigma$, then $\Sigma$ is unique, since $\Sigma$ 
is the centralizer of $\Nu$ in $\Gamma$. 
If $\Gamma \neq \Nu\times\Sigma$, then $\Sigma_0$ is unique, 
since $\Sigma_0$ is the centralizer of $\Nu$ in $\Gamma$. 
To see that $\Sigma$ is not necessarily unique, 
see example (5) in the next section. 
The group $\Sigma_0$ is normal in $\Gamma$, since $\Sigma_0$ is 
the centralizer of a normal subgroup of $\Gamma$. 
The group $\Sigma_0$ is complete and $\Gamma/\Sigma_0$ 
is an infinite dihedral group, since $\Sigma_0$ corresponds 
to the generic fiber of the geometric fibration of $({\rm I}\times \dot{M})/\langle \sigma\rangle$ 
over ${\rm I}/\langle \sigma\rangle$ induced by projection along the first factor. 
Finally $\Sigma_0$ is orthogonal to $\Nu$, since $\Sigma$ can be chosen to be orthogonal to $\Nu$ 
as in the proof of Lemma 8. 
\end{proof}

\section{Reducible 2-Dimensional Crystallographic Groups} 

Let $\Gamma$ be a 2-dimensional space group. 
Then every nontrivial geometric fibration of $E^2/\Gamma$ 
is a Seifert fibration. 
If $b+B\in \Gamma$ and $B$ has no 1-dimensional invariant 
vector space, then $E^2/\Gamma$ does not Seifert fiber. 
Hence if $B$ is a rotation of order 3 or 4, 
then $E^2/\Gamma$ does not Seifert fiber. 
This excludes 8 of the 2-dimensional 
space group isomorphism types. 
The orbifolds of the remaining 9 isomorphism types do Seifert fiber. 
We now describe all of these geometric Seifert fibrations. 
We denote a circle by $\circle$ and a closed interval by $\Iota$. 

We consider the groups in the order of Table 1A of Brown et al. \cite{B-Z} 
which corresponds to the IT order \cite{IT}. 
We use the generators for the representatives of the isomorphism types 
of the 2-dimensional space groups 
listed in Table 1A of \cite{B-Z}.  
Let $t_1 = e_1 + I$ and $t_2 = e_2 + I$ be the standard basis translations, and let  
$$A = \left(\begin{array}{rr} -1 & 0 \\ 0 & -1 \end{array}\right),\ \ 
B = \left(\begin{array}{rr} 1 & 0 \\ 0 & -1 \end{array}\right), \ \ 
C = \left(\begin{array}{rr} 0 & 1 \\ 1 & 0 \end{array}\right).$$
Let $a,b$ be relatively prime integers, let $c,d$ be integers such that $ad-bc=1$,   
and let $\phi:E^2\to E^2$ be the linear automorphism defined by $\phi(e_1) = (a,b)$ 
and $\phi(e_2) = (c,d)$. 
The symbol $\rtimes$ denotes a semidirect product of groups.

(1) Let $\Gamma=\langle t_1,t_2\rangle$. 
The isomorphism type of $\Gamma$ is $\circ\, (p1)$. 
Here $\circ$ is Conway's notation \cite{Conway} and $p1$ is 
the IT notation \cite{S}. 
The orbifold $E^2/\Gamma$ is a flat torus 
and $E^2/\Gamma$ is the cartesian product $\circle\times \circle$. 
The proper, complete, normal subgroups of $\Gamma$ 
are of the form $\langle t_1^at_2^b\rangle$. 
We have $\Gamma= \langle t_1^at_2^b\rangle \times \langle t_1^ct_2^d\rangle$, 
and so $E^2/\Gamma$ is a geometric trivial fiber bundle 
over $\circle$, with fiber $\circle$, in infinitely many ways. 
The linear automorphism $\phi$ normalizes $\Gamma$ and 
conjugates $\langle t_1\rangle$ to $\langle t_1^at_2^b\rangle$. 
Therefore, all the geometric Seifert fiberings of $E^2/\Gamma$ are affinely equivalent. 

(2) Let $\Gamma=\langle t_1,t_2, A\rangle$. 
The isomorphism type of $\Gamma$ is $2222\,(p2)$ and $E^2/\Gamma$ is a flat pillow. 
The proper, complete, normal subgroups of $\Gamma$ 
are of the form $\langle t_1^at_2^b\rangle$. 
We have $\Gamma= \langle t_1^at_2^b\rangle \rtimes \langle t_1^ct_2^d, A\rangle$, 
and so $E^2/\Gamma$ geometrically fibers over $\Iota$,  
with generic fiber $\circle$, in infinitely many ways. 
The linear automorphism $\phi$ normalizes $\Gamma$ and 
conjugates $\langle t_1\rangle$ to $\langle t_1^at_2^b\rangle$. 
Therefore, all the geometric Seifert fiberings of $E^2/\Gamma$ are affinely equivalent.

(3) Let $\Gamma=\langle t_1,t_2, B\rangle$. 
The isomorphism type of $\Gamma$ is $*\!*(pm)$ and $E^2/\Gamma$ is a flat annulus. 
The proper, complete, normal subgroups of $\Gamma$ 
are $Z(\Gamma)=\langle t_1\rangle$ and $\langle t_2,B\rangle$. 
We have $\Gamma = \langle t_1\rangle\times \langle t_2,B\rangle$, 
and so $E^2/\Gamma$ is the cartesian product $\circle\times \Iota$.

(4) Let $\Gamma=\langle t_1,t_2,\beta\rangle$ where $\beta = \frac{1}{2}e_1+B$. 
The isomorphism type of $\Gamma$ is $\times\!\times (pg)$ and $E^2/\Gamma$ is a flat Klein bottle.  
The proper, complete, normal subgroups of $\Gamma$  
are $Z(\Gamma)=\langle t_1\rangle$ and $\langle t_2\rangle$. 
Now $\Gamma/\langle t_1\rangle \cong D_\infty$, 
and so $E^2/\Gamma$ geometrically fibers over $\Iota$ with generic fiber $\circle$. 
The extension $\langle t_1\rangle \to \Gamma \to D_\infty$ 
does not split, since $\Gamma$ is torsion-free. 
Also $\Gamma = \langle t_2\rangle\rtimes\langle \beta\rangle$, 
and so $E^2/\Gamma$ is a geometric fiber bundle over $\circle$ with fiber $\circle$. 

(5) Let $\Gamma=\langle t_1,t_2,C\rangle$. 
The isomorphism type of $\Gamma$ is $*\!\times (cm)$ and $E^2/\Gamma$ is a flat M\"obius band.  
The proper, complete, normal subgroups of $\Gamma$  
are $Z(\Gamma)=\langle t_1t_2\rangle$ and $\langle t_1t_2^{-1},C\rangle$. 
Now $\Gamma/\langle t_1t_2\rangle \cong D_\infty$, 
and so $E^2/\Gamma$ geometrically fibers over $\Iota$ with generic fiber $\circle$. 
The extension $\langle t_1t_2\rangle \to \Gamma \to D_\infty$ does not split, 
since $\Gamma$ is not the direct product of $\langle t_1t_2\rangle$ and $D_\infty$. 
Also $\Gamma = \langle t_1t_2^{-1},C\rangle\rtimes\langle t_iC\rangle$ for $i=1,2$,  
and so $E^2/\Gamma$ is a geometric fiber bundle over $\circle$ with fiber $\Iota$. 
We have $(t_iC)^2 = t_1t_2$ for $i=1,2$, 
and $\langle t_1t_2\rangle$ is the centralizer of $\langle t_1t_2^{-1},C\rangle$ in $\Gamma$.  
As $(t_1C)^{-1} = t_2^{-1}C$, we have that $\langle t_1C\rangle \neq \langle t_2C\rangle$.  
Hence the subgroup $\Sigma = \langle t_1C\rangle$ in Theorem 20 is not necessarily unique.

(6) Let $\Gamma=\langle t_1,t_2, A, B\rangle$. 
The isomorphism type of $\Gamma$ is $\ast 2222\,(pmm)$ and $E^2/\Gamma$ is a square. 
The proper, complete, normal subgroups of $\Gamma$ 
are $\langle t_1, AB\rangle$ and $\langle t_2,B\rangle$. 
Now $\Gamma = \langle t_1, AB\rangle\times \langle t_2,B\rangle$, 
and so $E^2/\Gamma$ is the cartesian product $\Iota\times \Iota$. 
The linear automorphism $\sigma: E^2\to E^2$, defined by 
$\sigma(e_1) =e_2$ and $\sigma(e_2) = e_1$,  
normalizes $\Gamma$ and conjugates $\langle t_1, AB\rangle$ to $\langle t_2, B\rangle$. 
Therefore, the two geometric Seifert fiberings of $E^2/\Gamma$ are isometrically equivalent.

(7) Let $\Gamma=\langle t_1,t_2,A,\beta\rangle$ where $\beta = \frac{1}{2}e_2+B$. 
The isomorphism type is $22\ast (pmg)$ and $E^2/\Gamma$ is a pillowcase.  
The proper, complete, normal subgroups of $\Gamma$  
are $\langle t_1\rangle$ and $\langle t_2,\beta\rangle$. 
Now $\Gamma =\langle t_1\rangle \rtimes \langle A,\beta\rangle$, 
and so $E^2/\Gamma$ geometrically fibers over $\Iota$ with generic fiber $\circle$. 
Also $\Gamma = \langle t_2,\beta\rangle\rtimes\langle t_1,A\rangle$, 
and so $E^2/\Gamma$ geometrically fibers over $\Iota$ with generic fiber $\Iota$.  

(8) Let $\Gamma=\langle t_1,t_2,A,\beta\rangle$ where $\beta = \frac{1}{2}e_1+\frac{1}{2}e_2+B$. 
The isomorphism type of $\Gamma$ is $22\!\times (pgg)$ and $E^2/\Gamma$ is a projective pillow.  
The proper, complete, normal subgroups of $\Gamma$ 
are $\langle t_1\rangle$ and $\langle t_2\rangle$. 
Now $\Gamma/\langle t_i\rangle \cong D_\infty$ for $i=1,2$, 
and so $E^2/\Gamma$ geometrically fibers over $\Iota$, with generic fiber $\circle$, in two ways.  
The extension $\langle t_1\rangle \to \Gamma \to D_\infty$ does not split, 
since $\beta$ projects to an element of order 2 but $t_1^k\beta$ is a glide-reflection 
for each integer $k$. 
The linear automorphism $\sigma: E^2\to E^2$, defined by 
$\sigma(e_1) =e_2$ and $\sigma(e_2) = e_1$,  
normalizes $\Gamma$ and conjugates $\langle t_1\rangle$ to $\langle t_2\rangle$. 
Therefore, the two geometric Seifert fiberings of $E^2/\Gamma$ are isometrically equivalent. 
Hence the extension $\langle t_2\rangle \to \Gamma \to D_\infty$ also does not split. 

(9) Let $\Gamma=\langle t_1,t_2,A,C\rangle$. 
The isomorphism type of $\Gamma$ is $2\!\ast\! 22\,(cmm)$ and $E^2/\Gamma$ \linebreak 
is a pointed hood.  
The proper, complete, normal subgroups of $\Gamma$ 
are $\langle t_1t_2,AC\rangle$ and $\langle t_1t_2^{-1},C\rangle$. 
Now $\Gamma =\langle t_1t_2, AC\rangle \rtimes \langle t_1A, C\rangle$ 
and $\Gamma =\langle t_1t_2^{-1}, C\rangle \rtimes \langle t_1A, AC\rangle$,  
and so $E^2/\Gamma$ geometrically fibers over $\Iota$,   
with generic fiber $\Iota$, in two ways.  
The linear automorphism $\sigma: E^2\to E^2$, defined by 
$\sigma(e_1) =e_1$ and $\sigma(e_2) = -e_2$,  
normalizes $\Gamma$ and conjugates $\langle t_1t_2,AC\rangle$ to $\langle t_1t_2^{-1},C\rangle$. 
Therefore, the two geometric Seifert fiberings of $E^2/\Gamma$ are isometrically equivalent. 

\section{Co-Seifert Fibrations}  

Let $M$ be a flat $n$-orbifold. 
We call a geometric fibration of $M$ 
over a connected, compact, flat 1-orbifold $B$,  
with generic fiber a flat $(n-1)$-orbifold $F$, 
a {\it geometric co-Seifert fibration}. 
Here $B$ is either a circle $\circle$ or a closed interval $\Iota$. 
The structure of a geometric co-Seifert fibration $\eta:M\to B$ 
tells you a lot about the geometry and topology of the orbifold $M$. 
If the base $B$ is a circle $\circle$, 
then $M$ is a geometric fiber bundle over $\circle$ with fiber $F$. 
Hence $M$ is a mapping torus over $F$ with monodromy an isometry of $F$. 
Moreover, the singular set $\Sigma$ of $M$ is a fiber bundle over $\circle$ 
with fiber the singular set $\Sigma_\ast$ of $F$.

For example, take $\Gamma$ to be as in (5) in \S 10. 
Then $M = E^2/\Gamma$ is a flat M\"obius band, 
$M$ is a geometric fiber bundle over $\circle$ with fiber $\Iota$, 
and $M$ is the mapping torus over $\Iota$ with monodromy the reflection of $\Iota$ 
about the midpoint of $\Iota$.  
The singular set $\Sigma$ of $M$ is the boundary of the M\"obius band,  
which is a nontrivial fiber bundle over $\circle$ 
with fiber the endpoints of $\Iota$. 

Now suppose the base $B$ is a closed interval $\Iota$ with endpoints $0$ and $1$. 
Let $\Iota^\circ$ be the open interval $(0,1)$. 
Then $\eta^{-1}(\Iota^\circ)$ is the cartesian product of $F$ and $\Iota^\circ$. 
Hence $M$ is obtained from the open tube $\eta^{-1}(\Iota^\circ)$ by adjoining 
the two singular fibers $F_0 = \eta^{-1}(0)$ and $F_1=\eta^{-1}(1)$ of $\eta$ 
to the ends of the tube, one to each end. 
Let $G_0$ and $G_1$ be the finite groups of order 2 
in the definition of the geometric fibration $\eta$ 
such that $F_i$ is isometric to $F/G_i$ for $i=1,2$. 
There are three cases to consider. 

The first case is when $G_0$ and $G_1$ both act trivially on $F$. 
Then $M$ is the cartesian product $F\times\Iota$. 
The singular set $\Sigma$ of $M$ is $F_0\cup (\Sigma_\ast\times \Iota)\cup F_1$ 
where $\Sigma_\ast$ is the singular set of $F$.  

For example, take $\Gamma$ to be as in (6) in \S 10. 
Then $M= E^2/\Gamma$ is a square and $M$ is the cartesian product $\Iota\times\Iota$.  
The singular set of $M$ is the boundary of the square, 
which is the union of the 4 closed intervals $F_0, \Sigma_\ast\times \Iota, F_1$. 

The second case is when one of $G_0$ and $G_1$ acts trivially on $F$ and the other does not,  
say $G_1$ acts trivially on $F$ and $G_0$ does not. 
Then $M$ is isometric to $(F\times [-1,1])/G_0$ where $G_0$ acts diagonally, 
isometrically on $F$, and orthogonally on $[-1,1]$ by reflecting $[-1,1]$ about $0$;  
in other words, $M$ is a twisted $\Iota$-bundle over $F_0$ 
with monodromy determined by the action of $G_0$ on $F$.   
The singular set $\Sigma$ of $M$ is $\Sigma_0\cup(\Sigma_\ast\times (0,1])\cup F_1$ 
where $\Sigma_0$ is the singular set of $F_0$. 

For example, take $\Gamma$ to be as in (7) in \S 10. 
Then $M= E^2/\Gamma$ is a pillowcase and $M$ geometrically fibers over $\Iota$ 
with generic fiber $\circle$ as in the second case. 
We have that $M$ is a twisted $\Iota$-bundle over $\Iota$ 
with monodromy determined by a reflection of $\circle$.  
The singular set of $M$ is the union of the boundary circle $F_1$ of $M$ 
together with the two cone points which form the singular set of the closed interval $F_0$. 

As another example, take $\Gamma$ to be as in (9) in \S 10. 
Then $M=E^2/\Gamma$ is a pointed hood and $M$ geometrically fibers over $\Iota$ 
with generic fiber $\Iota$ as in the second case. 
We have that $M$ is a twisted $\Iota$-bundle over $\Iota$ 
with monodromy determined by the reflection of $\Iota$. 
The singular set $\Sigma$ of $M$ is the boundary of $M$ together with a single cone point.  
Observe that $\Sigma$ is the union of the closed interval $F_1$,  
two half-open intervals $\Sigma_\ast\times (0,1]$, and  
the end points of $F_0$, one of which is the cone point and the other joins 
the two half-open intervals $\Sigma_\ast\times (0,1]$ to form a closed interval.  

The third case is when $G_0$ and $G_1$ both act nontrivially on $F$. 
Let $M_0 = \eta^{-1}([0,1/2])$ and $M_1=\eta^{-1}([1/2,1])$, and let $F_{1/2} = \eta^{-1}(1/2)$. 
Then $M_i$ is a twisted $\Iota$-bundle over $F_i$ with monodromy determined by the action 
of $G_i$ on $F$ for $i=1,2$. 
Note that $M_i$ is a flat $n$-orbifold with totally geodesic boundary $F_{1/2}$ 
for each $i=1,2$. 
We have that $M = M_0\cup M_1$ with $M_0\cap M_1 = F_{1/2}$. 
The singular set $\Sigma$ of $M$ is $\Sigma_0\cup(\Sigma_\ast\times (0,1))\cup\Sigma_1$ 
where $\Sigma_i$ is the singular set of $F_i$ for $i=1,2$. 

For example, take $\Gamma$ to be as in (2) in \S 10. 
Then $M= E^2/\Gamma$ is a pillow and $M$ geometrically fibers over $\Iota$ 
with generic fiber $\circle$ as in the third case. 
Observe that $M_i$ is a pillowcase for each $i =1,2$, with $F_{1/2}$ their common boundary circle. 
The singular set of $M$ is four cone points which form the union of the endpoints  
of the closed intervals $F_0$ and $F_1$. 

As another example, take $\Gamma$ to be as in (8) in \S 10. 
Then $M= E^2/\Gamma$ is a projective pillow and $M$ geometrically fibers over $\Iota$ 
with generic fiber $\circle$ as in the third case. 
Here $G_0$ acts be a reflection on $\circle$ and $G_1$ acts by the a half-turn on $\circle$. 
Therefore $M_0$ is a pillowcase and $M_1$ is a M\"obius band,  
with $F_{1/2}$ is their common boundary circle. 
Therefore $M$ is topologically a projective plane. 
The singular set of $M$ consists of two cone points which are the endpoints 
of the closed interval $F_0$. 

As another example, take $\Gamma$ to be as in (7) in \S 10. 
Then $M= E^2/\Gamma$ is a pillowcase and $M$ geometrically fibers over $\Iota$ 
with generic fiber $\Iota$ as in the second case. 
Here $G_i$ acts on $\Iota$ by reflection for $i=1,2$,  
$M_i$ is a pointed hood for each $i=1,2$, and $F_{1/2}$ is a closed interval 
which is half the boundary of each hood. 
The singular set consists of the endpoints of the closed intervals $F_0$ and $F_1$ 
and two open intervals $\Sigma_\ast\times (0,1)$. 
The two open intervals $\Sigma_\ast\times (0,1)$ together with one endpoint from each of $F_0$ 
and $F_1$ form the boundary circle of $M$. 

Let $\Gamma$ be a  3-dimensional, orientation preserving, space group. 
In \cite{Dunbar}, Dunbar describes the topology and geometry of the flat orbifold $E^3/\Gamma$. 
When $E^3/\Gamma$ co-Seifert fibers, the structure of a co-Seifert fibration  
can effectively be used to determine the topology and geometry of the orbifold $E^3/\Gamma$.

\section{Reducible 3-Dimensional Crystallographic Groups}  

Let $\Gamma$ be a 3-dimensional space group 
with point group $\Pi$.  
Suppose $E^3/\Gamma$ geometrically fibers over 
a 1- or 2-dimensional flat orbifold. 
By Theorem 7, the geometric fibration of $E^3/\Gamma$ is determined  
by a complete normal subgroup ${\rm N}$ of $\Gamma$. 
Let $V = {\rm Span}({\rm N})$. 
Then $V$ is either a 1- or 2-dimensional vector subspace of $E^3$. 
As both $V$ and $V^\perp$ are invariant by $\Pi$, 
the group $\Pi$ has a 1-dimensional invariant vector space. 

The group $\Gamma$ is said to be {\it reducible} or {\it irreducible} 
according as its point group $\Pi$ has or has not 
a 1-dimensional invariant vector space. 
It is well known to crystallographers that 
reducibility of 3-dimensional space groups 
is equivalent to $\mathbb Z$-reducibility. 
Hence $E^3/\Gamma$ geometrically fibers  
over a flat 1- or 2-orbifold if and only if $\Gamma$ 
is reducible by Theorems 4, 7, and 11. 

There are six families of 3-dimensional space group isomorphism types,  
the triclinic, monoclinic, orthorhombic, tetragonal, hexagonal, and cubic families. 
See Table 1B of Brown et al.~\cite{B-Z} where the six families are listed in the above order. 
The cubic family consists of all the irreducible isomorphism types. 
There are 35 irreducible 3-dimensional 
space group isomorphism types 
and 184 reducible 3-dimensional space group isomorphism types.

All the geometric Seifert fibrations of $E^3/\Gamma$, up to affine equivalence,   
were neatly described by Conway et al.~in their 2001 paper \cite{C-T}.  
We will describe the Conway et al.~\ formulation of a geometric fibration of $E^3/\Gamma$ 
over a flat 2-orbifold and show how it determines a dual geometric fibration of $E^3/\Gamma$ 
over a flat 1-orbifold with the same invariant 1-dimensional vector space. 

Let $\Gamma$ is a reducible 3-dimensional space group. 
Conway et al.~represent a geometric fibration of $E^3/\Gamma$ over a flat 2-orbifold 
by an exact sequence
$$ 1 \to {\rm K} \longrightarrow \Gamma\ {\buildrel \pi\over\longrightarrow}\ {\rm H} \to 1$$
where ${\rm H}$ is a 2-dimensional space group 
and ${\rm K} = {\rm ker}(\pi)$. 
Here an element $g$ of $\Gamma$ is represented in the form $(g_H,g_V)$ 
where $\pi(g) = g_H$ and $g_V$ is the 3rd coordinate function of $g$. 
The group ${\rm K}$ is generated by either the translation $e_3+I$ 
or $e_3+I$ and the reflection $A={\rm diag}(1,1,-1)$.  
The group extension is specified by normalized choices of 
the 3rd coordinate functions for a set of generators of ${\rm H}$. 
The choices are such that $\{(I,g_V):\,g\in\Gamma\}$ 
is a discrete group $\Lambda$ of isometries of $E^3$ 
containing ${\rm K}$ as a normal subgroup of finite index. 
The projection $\phi: g\mapsto g_V$ is a group homomorphism 
whose image is a 1-dimensional space group isomorphic to $\Lambda$. 
Let ${\rm N} = {\rm ker}(\phi)$. 
Then ${\rm N}$ is complete by Theorem 5. 
Hence $E^3/\Gamma$ geometrically fibers over a flat 1-orbifold 
corresponding to the group extension
$$ 1 \to {\rm N} \longrightarrow \Gamma\ {\buildrel \phi\over\longrightarrow}\ \Lambda \to 1$$
by Theorem 4 with ${\rm Span}(\Nu) = {\rm Span}\{e_1,e_2\}$. 
Hence $\Kappa$ and $\Nu$ are orthogonal. 
We call $\Nu$ the {\it orthogonal dual} of $\Kappa$, 
and we say that $\Kappa$ and $\Nu$ correspond under {\it orthogonal duality}. 

As ${\rm K}\cap {\rm N} = \{I\}$, we have that $\pi$ maps ${\rm N}$ 
isometrically onto a normal subgroup of ${\rm H}$, moreover
\begin{eqnarray*}
{\rm H}/\pi({\rm N}) & \cong & (\Gamma/{\rm K})/({\rm KN}/{\rm K}) \\
                     & \cong & \Gamma/{\rm KN} \\
                     & \cong & (\Gamma/{\rm N})/({\rm KN}/{\rm N}) \ \ \cong \ \ \Lambda/{\rm K}.
\end{eqnarray*}

All the elements of $\Kappa$ commute with all the elements of $\Nu$. 
The group $\Gamma$ is the direct product of ${\rm K}$ and ${\rm N}$ 
if and only if ${\rm K} = \Lambda$. 
This corresponds to the occurrence of a row of $0\!+ \cdots 0\!+0-$ 
(resp. $0\!+\cdots 0+$) in the couplings column of Table 1 of \cite{C-T} 
for interval (resp. circular) fibrations.

The group $\Lambda$ is infinite cyclic 
if and only if all the elements of $\Lambda$ are translations. 
This corresponds to the occurrence of a row of 
all plus signs in the couplings column of Table 1 of \cite{C-T}. 

The tetragonal and hexagonal families of 3-dimensional space group isomorphism types  
(IT numbers 75-194) consist of all the reducible group isomorphism types 
whose point group has a unique 1-dimensional invariant vector space. 
If $\Gamma$ belongs to one of these 110 isomorphism types, 
then $\Gamma$ has a unique 1-dimensional, complete, normal subgroup $\Kappa$ and 
a unique 2-dimensional, complete, normal subgroup $\Nu$;  
moreover $\Kappa$ and $\Nu$ are orthogonal. 
This is the case if and only if in the Conway et al.~representation of $\Gamma$  
the group $\Eta$ is irreducible. 
Hence, the classification of the geometric co-Seifert fibrations of $E^3/\Gamma$ 
corresponds via orthogonal duality to the Conway et al.~\ classification of the geometric 
Seifert fibrations of $E^3/\Gamma$ when $\Gamma$ belongs to the tetragonal 
or hexagonal families. 

The orthorhombic family (IT numbers 16-74) consists of all the reducible group isomorphism types 
whose point group has exactly three orthogonal 1-dimensional invariant vector spaces. 
If $\Gamma$ belongs to one of these 59 isomorphism types, 
then $\Gamma$ has exactly three 1-dimensional, complete, normal subgroups 
$\Kappa_1,\Kappa_2,\Kappa_3$,    
and $\Gamma$ has exactly three 2-mensional, complete, normal subgroups $\Nu_1,\Nu_2,\Nu_3$.  
This is the case if and only if in the Conway et al.~\ representation of $\Gamma$ 
the group $\Eta$ has exactly two proper complete, normal subgroups. 
We order $\Kappa_i$ and $\Nu_i$ so that $V_i ={\rm Span}(\Kappa_i)$ 
and $W_i={\rm Span}(\Nu_i)$ are orthogonal complements for each $i$. 
The 1-dimensional vector spaces $V_1,V_2,V_3$ are mutually orthogonal. 

Suppose $\alpha:E^3\to E^3$ is an affine homeomorphism 
such that $\alpha\Gamma\alpha^{-1} = \Gamma'$ with $\Gamma'$ a space group. 
Define $\Kappa_i' = \alpha\Kappa_i\alpha^{-1}$ 
and $\Nu_i' = (\Kappa_i')^\perp$ for $i=1,2,3$. 
Let $V_i' ={\rm Span}(\Kappa_i')$ and $W_i' = {\rm Span}(\Nu_i')$ for each $i$. 
Let $\alpha = a + A$ with $A$ a linear automorphism of $E^3$. 
Then $AV_i = V_i'$ for each $i$. 
Now $\alpha\Nu_i\alpha^{-1} = \Nu_j'$ for some $j$ by Corollary 2. 
Therefore $AW_i = W_j'$. 
As $AV_i\cap AW_i = \{0\}$, we must have that $j=i$ for each $i=1,2,3$. 
Likewise if $\Nu_i' = \alpha\Nu_i\alpha^{-1}$ 
and $\Kappa_i' = (\Nu_i')^\perp$ for each $i=1,2,3$,  
then $\alpha(\Kappa_i)\alpha^{-1} = \Kappa_i'$ for each $i=1,2,3$. 
Thus if $\Nu$ is a complete normal subgroup of $\Gamma$,  
then $\alpha(\Nu^\perp)\alpha^{-1} = (\alpha\Nu\alpha^{-1})^\perp$. 
By Theorem 10, the geometric fibrations of $E^3/\Gamma$ and $E^3/\Gamma'$ 
determined by complete normal subgroups $\Nu$ of $\Gamma$ 
and $\Nu'$ of $\Gamma'$ are affinely equivalent 
if and only if the geometric fibrations of $E^3/\Gamma$ and $E^3/\Gamma'$ 
determined by $\Nu^\perp$ and $(\Nu')^\perp$ are affinely equivalent.  
Hence, the classification of the geometric co-Seifert fibrations of $E^3/\Gamma$ 
corresponds via orthogonal duality to the Conway et al.~\ classification of the geometric 
Seifert fibrations of $E^3/\Gamma$ when $\Gamma$ belongs to the orthorhombic family. 

The triclinic and monoclinic families consists of all the reducible space group isomorphism types 
whose point group has infinitely many 1-dimensional invariant vector spaces. 
If $\Gamma$ belongs to one of these 15 isomorphism types, 
then $\Gamma$ has infinitely many 1-dimensional, complete, normal subgroups,  
and $\Gamma$ has infinitely many 2-dimensional, complete, normal subgroups. 
This is the case if and only if in the Conway et al. representation of $\Gamma$ 
the group $\Eta$ has infinitely many proper complete normal subgroups. 

The triclinic family consists of two isomorphism types (IT numbers 1 and 2). 
If $\Gamma$ belongs to the triclinic family, 
then all the geometric Seifert fibrations of $E^3/\Gamma$ 
are affinely equivalent and all the geometric co-Seifert fibrations of $E^3/\Gamma$ 
are affinely equivalent. 

The monoclinic family consists of 13 isomorphism types (IT numbers 3-15).  
If $\Gamma$ belongs to one of these 13 isomorphism types, 
then $\Gamma$ has a unique 1-dimensional, complete, characteristic subgroup $\Kappa$  
and a unique 2-dimensional, complete, characteristic subgroup $\Nu$; 
moreover $\Kappa$ and $\Nu$ are orthogonal.  
The corresponding Seifert fibration is given by the primary name in Table 2b 
of \cite{C-T}. 
The classification of the geometric co-Seifert fibrations of $E^3/\Gamma$ 
corresponds via orthogonal duality to the Conway et al.~\ classification of the geometric 
Seifert fibrations of $E^3/\Gamma$ when $\Gamma$ belongs to the monoclinic family 
except for the two cases $(\ast\!:\!\times)$ and $(2\ov{\ast}2\!:\!2)$ in \cite{C-T}.  
In Table 1, we replace the Seifert fibration $(\ast\!:\!\times)$ 
of space group IT 9 
with the affine equivalent Seifert fibration $(\ast\!:\!\times_1)$,  
with couplings $\frac{1}{2}\!+\frac{1}{2}+$, 
and we replace the Seifert fibration $(2\ov{\ast}2\!:\!2)$ of space group IT 15
with the affine equivalent Seifert fibration $(2\ov{\ast}2_1\!:\!2)$ 
with couplings $0\!-\frac{1}{2}\!-\frac{1}{2}+$. 
With these two substitutions, indicated by a $\dag$ in Table 1, 
the classification of the geometric co-Seifert 
fibrations of $E^3/\Gamma$ under affine equivalence corresponds via orthogonal duality  
to the Conway et al.~\ classification of the geometric Seifert fibrations of $E^3/\Gamma$. 
See Table 1 for the orthogonal correspondence between the the classifications 
of the Seifert and co-Seifert fibrations of $E^3/\Gamma$.  

The first column of Table 1 is the IT number of the corresponding space group. 
The second column is the Conway fibrifold name of the Seifert fibration. 
The third column indicates whether or not the corresponding space group extension splits. 
If the generic fiber is a closed interval, then the space group extension splits 
by Theorem 18. 
The fourth column indicates the generic fiber of the orthogonal dual co-Seifert fibration. 
The fifth column indicates the base of the orthogonal dual co-Seifert fibration 
with a centered dot representing a circle and a dash representing a closed interval.  
The sixth column indicates whether or not the corresponding space group extension splits. 
If the base is a circle, then the space group extension splits, 
since $\Gamma/\Nu$ is an infinite cyclic group. 
The seventh column gives the index of $\Kappa\Nu$ in $\Gamma$; 
in particular, the index is 1 if and only if both fibrations are direct products. 
If the generic fiber of the Seifert fibration is a closed interval, 
then the base of the co-Seifert fibration is also a closed interval 
and the index is 1 or 2 by Theorem 20. 
The information in Table 1 was obtained by computer calculations. 
Finally, the 10 closed flat space forms in Table 1 
have IT numbers 1,4,7,9,19,29,33,76,144,169.

\newpage

\begingroup
\setlength{\columnsep}{-9pt}
\footnotesize
\setlength{\tabcolsep}{1.6pt}
\newcommand{\fibtablestart}{
\begin{tabular}{clclccc}
\multicolumn{3}{c}{\underline{Seifert fibration}}&
\multicolumn{4}{c}{\underline{co-Seifert fibration}}\\
no.&fibrifold&split& fiber&base&split&ind.\\
\cline{1-7}}
\newcommand{\fibtableend}{
\end{tabular}\vfill}
\newcommand{\fibtablesplit}{
\fibtableend
\fibtablestart}
\newcommand{\fibtablepage}{
\fibtableend
\end{multicols}
\begin{multicols}{2}
\fibtablestart}
\begin{multicols}{2}
\fibtablestart
1&$({\circ})$&Yes&${\circ}$&$\cdot$&Yes&1\\
2&$(2222)$&Yes&${\circ}$&$-$&Yes&2\\
3&$({\ast}_0{\ast}_0)$&Yes&${\circ}$&$-$&Yes&2\\
3&$(2_02_02_02_0)$&Yes&$2222$&$\cdot$&Yes&1\\
4&$({\bar\times\,}{\bar\times\,})$&Yes&${\circ}$&$-$&No&2\\
4&$(2_12_12_12_1)$&No&${\circ}$&$\cdot$&Yes&2\\
5&$({\ast}{\bar\times\,})$&Yes&${\circ}$&$-$&Yes&2\\
5&$({\ast}_1{\ast}_1)$&No&${\circ}$&$-$&No&4\\
5&$(2_02_02_12_1)$&No&$2222$&$\cdot$&Yes&2\\
6&$[{\circ}_0]$&Yes&${\circ}$&$-$&Yes&1\\
6&$({\ast}{\cdot}{\ast}{\cdot})$&Yes&${\ast}{\ast}$&$\cdot$&Yes&1\\
7&$({\bar\circ}_0)$&Yes&${\circ}$&$-$&No&2\\
7&$({\times}{\times}_0)$&Yes&${\times}{\times}$&$\cdot$&Yes&1\\
7&$({\ast}{:}{\ast}{:})$&No&${\circ}$&$\cdot$&Yes&2\\
8&$[{\circ}_1]$&Yes&${\circ}$&$-$&No&2\\
8&$({\ast}{\cdot}{\times})$&Yes&${\ast}{\times}$&$\cdot$&Yes&1\\
8&$({\ast}{\cdot}{\ast}{:})$&No&${\ast}{\ast}$&$\cdot$&Yes&2\\
9&$({\bar\circ}_1)$&No&${\circ}$&$-$&No&4\\
9&$({\times}{\times}_1)$&No&${\times}{\times}$&$\cdot$&Yes&2\\
9&$({\ast}{:}{\times}_1)^{\dag}$&No&${\circ}$&$\cdot$&Yes&2\\
10&$[2_02_02_02_0]$&Yes&$2222$&$-$&Yes&1\\
10&$({\ast}2{\cdot}22{\cdot}2)$&Yes&${\ast}{\ast}$&$-$&Yes&2\\
11&$[2_12_12_12_1]$&Yes&${\circ}$&$-$&Yes&2\\
11&$(22{\ast}{\cdot})$&Yes&${\ast}{\ast}$&$-$&Yes&2\\
12&$[2_02_02_12_1]$&Yes&$2222$&$-$&Yes&2\\
12&$(2{\bar\ast}2{\cdot}2)$&Yes&${\ast}{\times}$&$-$&Yes&2\\
12&$({\ast}2{\cdot}22{:}2)$&No&${\ast}{\ast}$&$-$&Yes&4\\
13&$(2_02_022)$&Yes&$2222$&$-$&Yes&2\\
13&$(22{\ast}_0)$&Yes&${\times}{\times}$&$-$&Yes&2\\
13&$({\ast}2{:}22{:}2)$&No&${\circ}$&$-$&Yes&4\\
14&$(2_12_122)$&No&${\circ}$&$-$&No&4\\
14&$(22{\times})$&Yes&${\times}{\times}$&$-$&Yes&2\\
14&$(22{\ast}{:})$&No&${\circ}$&$-$&No&4\\
15&$(2_02_122)$&No&$2222$&$-$&Yes&4\\
15&$(22{\ast}_1)$&No&${\times}{\times}$&$-$&Yes&4\\
15&$(2{\bar\ast}2_1{:}2)^{\dag}$&No&${\circ}$&$-$&Yes&4\\
16&$({\ast}2_02_02_02_0)$&Yes&$2222$&$-$&Yes&2\\
17&$(2_02_0{\ast})$&Yes&$2222$&$-$&Yes&2\\
17&$({\ast}2_12_12_12_1)$&No&${\circ}$&$-$&Yes&4\\
18&$(2_02_0{\bar\times\,})$&Yes&$2222$&$-$&No&2\\
18&$(2_12_1{\ast})$&No&${\circ}$&$-$&No&4\\
19&$(2_12_1{\bar\times\,})$&No&${\circ}$&$-$&No&4\\
20&$(2_02_1{\ast})$&No&$2222$&$-$&No&4\\
20&$(2_1{\ast}2_12_1)$&No&${\circ}$&$-$&Yes&4\\
21&$(2_0{\ast}2_02_0)$&Yes&$2222$&$-$&Yes&2\\
21&$({\ast}2_02_02_12_1)$&No&$2222$&$-$&Yes&4\\
22&$({\ast}2_02_12_02_1)$&No&$2222$&$-$&Yes&4\\
23&$(2_1{\ast}2_02_0)$&No&$2222$&$-$&No&4\\
24&$(2_0{\ast}2_12_1)$&No&$2222$&$-$&Yes&4\\
25&$[{\ast}_0{\cdot}{\ast}_0{\cdot}]$&Yes&${\ast}{\ast}$&$-$&Yes&1\\
25&$({\ast}{\cdot}2{\cdot}2{\cdot}2{\cdot}2)$&Yes&${\ast}2222$&$\cdot$&Yes&1\\
26&$[{\times}_0{\times}_0]$&Yes&${\times}{\times}$&$-$&Yes&1\\
\fibtablesplit
26&$({\bar\ast}{\cdot}{\bar\ast}{\cdot})$&Yes&${\ast}{\ast}$&$-$&No&2\\
26&$({\ast}{\cdot}2{:}2{\cdot}2{:}2)$&No&${\ast}{\ast}$&$\cdot$&Yes&2\\
27&$({\bar\ast}_0{\bar\ast}_0)$&Yes&${\times}{\times}$&$-$&Yes&2\\
27&$({\ast}{:}2{:}2{:}2{:}2)$&No&$2222$&$\cdot$&Yes&2\\
28&$({\ast}{\cdot}{\ast}_0)$&Yes&${\ast}{\ast}$&$-$&Yes&2\\
28&$[{\ast}_0{:}{\ast}_0{:}]$&Yes&${\circ}$&$-$&Yes&2\\
28&$(2_02_0{\ast}{\cdot})$&Yes&$22{\ast}$&$\cdot$&Yes&1\\
29&$({\bar\times\,}{\times}_0)$&Yes&${\times}{\times}$&$-$&No&2\\
29&$({\bar\ast}{:}{\bar\ast}{:})$&No&${\circ}$&$-$&No&4\\
29&$(2_12_1{\ast}{:})$&No&${\times}{\times}$&$\cdot$&Yes&2\\
30&$({\ast}_0{\times}_0)$&Yes&${\times}{\times}$&$-$&Yes&2\\
30&$({\bar\ast}_1{\bar\ast}_1)$&No&${\circ}$&$-$&No&4\\
30&$(2_02_0{\ast}{:})$&No&$2222$&$\cdot$&Yes&2\\
31&$[{\times}_1{\times}_1]$&Yes&${\circ}$&$-$&No&2\\
31&$({\ast}{\cdot}{\bar\times\,})$&Yes&${\ast}{\ast}$&$-$&No&2\\
31&$(2_12_1{\ast}{\cdot})$&No&${\ast}{\ast}$&$\cdot$&Yes&2\\
32&$({\ast}{:}{\ast}_0)$&No&${\circ}$&$-$&No&4\\
32&$(2_02_0{\times}_0)$&Yes&$22{\times}$&$\cdot$&Yes&1\\
33&$({\bar\times\,}{\times}_1)$&No&${\circ}$&$-$&No&4\\
33&$({\ast}{:}{\bar\times\,})$&No&${\circ}$&$-$&No&4\\
33&$(2_12_1{\times})$&No&${\times}{\times}$&$\cdot$&Yes&2\\
34&$({\ast}_0{\times}_1)$&No&${\circ}$&$-$&No&4\\
34&$(2_02_0{\times}_1)$&No&$2222$&$\cdot$&Yes&2\\
35&$[{\ast}_0{\cdot}{\ast}_0{:}]$&Yes&${\ast}{\ast}$&$-$&Yes&2\\
35&$(2_0{\ast}{\cdot}2{\cdot}2)$&Yes&$2{\ast}22$&$\cdot$&Yes&1\\
36&$[{\times}_0{\times}_1]$&Yes&${\times}{\times}$&$-$&No&2\\
36&$({\bar\ast}{\cdot}{\bar\ast}{:})$&No&${\ast}{\ast}$&$-$&No&4\\
36&$(2_1{\ast}{\cdot}2{:}2)$&No&${\ast}{\times}$&$\cdot$&Yes&2\\
37&$({\bar\ast}_0{\bar\ast}_1)$&No&${\times}{\times}$&$-$&Yes&4\\
37&$(2_0{\ast}{:}2{:}2)$&No&$2222$&$\cdot$&Yes&2\\
38&$[{\ast}{\cdot}{\times}_0]$&Yes&${\ast}{\times}$&$-$&Yes&1\\
38&$[{\ast}_1{\cdot}{\ast}_1{\cdot}]$&Yes&${\ast}{\ast}$&$-$&No&2\\
38&$({\ast}{\cdot}2{\cdot}2{\cdot}2{:}2)$&No&${\ast}2222$&$\cdot$&Yes&2\\
39&$({\bar\ast}{\cdot}{\bar\ast}_0)$&Yes&${\ast}{\times}$&$-$&Yes&2\\
39&$[{\ast}_1{:}{\ast}_1{:}]$&Yes&${\times}{\times}$&$-$&Yes&2\\
39&$({\ast}{\cdot}2{:}2{:}2{:}2)$&No&$22{\ast}$&$\cdot$&Yes&2\\
40&$[{\ast}{:}{\times}_1]$&Yes&${\circ}$&$-$&Yes&2\\
40&$({\ast}{\cdot}{\ast}_1)$&No&${\ast}{\ast}$&$-$&No&4\\
40&$(2_02_1{\ast}{\cdot})$&No&$22{\ast}$&$\cdot$&Yes&2\\
41&$({\bar\ast}{:}{\bar\ast}_1)$&No&${\circ}$&$-$&No&4\\
41&$({\ast}{:}{\ast}_1)$&No&${\times}{\times}$&$-$&No&4\\
41&$(2_02_1{\ast}{:})$&No&$22{\times}$&$\cdot$&Yes&2\\
42&$[{\ast}_1{\cdot}{\ast}_1{:}]$&Yes&${\ast}{\times}$&$-$&Yes&2\\
42&$({\ast}{\cdot}2{\cdot}2{:}2{:}2)$&No&$2{\ast}22$&$\cdot$&Yes&2\\
43&$({\ast}_1{\times})$&No&${\circ}$&$-$&No&8\\
43&$(2_02_1{\times})$&No&$2222$&$\cdot$&Yes&4\\
44&$[{\ast}{\cdot}{\times}_1]$&Yes&${\ast}{\ast}$&$-$&No&2\\
44&$(2_1{\ast}{\cdot}2{\cdot}2)$&No&${\ast}2222$&$\cdot$&Yes&2\\
45&$({\bar\ast}{:}{\bar\ast}_0)$&No&${\times}{\times}$&$-$&No&4\\
45&$(2_1{\ast}{:}2{:}2)$&No&$22{\times}$&$\cdot$&Yes&2\\
46&$[{\ast}{:}{\times}_0]$&Yes&${\times}{\times}$&$-$&Yes&2\\
46&$({\bar\ast}{\cdot}{\bar\ast}_1)$&No&${\ast}{\ast}$&$-$&No&4\\
\fibtableend\end{multicols}
\medskip\centerline{Table 1. Three-dimensional crystallographic Seifert and co-Seifert fibrations.}\goodbreak
\begin{multicols}{2}\fibtablestart
46&$(2_0{\ast}{\cdot}2{:}2)$&No&$22{\ast}$&$\cdot$&Yes&2\\
47&$[{\ast}{\cdot}2{\cdot}2{\cdot}2{\cdot}2]$&Yes&${\ast}2222$&$-$&Yes&1\\
48&$(2{\bar\ast}_12_02_0)$&No&$2222$&$-$&Yes&4\\
49&$({\ast}2_02_02{\cdot}2)$&Yes&$22{\ast}$&$-$&Yes&2\\
49&$[{\ast}{:}2{:}2{:}2{:}2]$&Yes&$2222$&$-$&Yes&2\\
50&$(2{\bar\ast}_02_02_0)$&Yes&$22{\times}$&$-$&Yes&2\\
50&$({\ast}2_02_02{:}2)$&No&$2222$&$-$&Yes&4\\
51&$[2_02_0{\ast}{\cdot}]$&Yes&$22{\ast}$&$-$&Yes&1\\
51&$({\ast}2{\cdot}2{\cdot}2{\cdot}2)$&Yes&${\ast}2222$&$-$&Yes&2\\
51&$[{\ast}{\cdot}2{:}2{\cdot}2{:}2]$&Yes&${\ast}{\ast}$&$-$&Yes&2\\
52&$(2_02{\bar\ast}_1)$&No&$2222$&$-$&Yes&4\\
52&$(2{\bar\ast}2_12_1)$&No&${\times}{\times}$&$-$&Yes&4\\
52&$(2_0{\ast}2{:}2)$&No&$2222$&$-$&Yes&4\\
53&$[2_02_0{\ast}{:}]$&Yes&$2222$&$-$&Yes&2\\
53&$(2_0{\ast}2{\cdot}2)$&Yes&$22{\ast}$&$-$&Yes&2\\
53&$({\ast}2_12_12{\cdot}2)$&No&${\ast}{\ast}$&$-$&Yes&4\\
54&$(2_02{\bar\ast}_0)$&Yes&$22{\times}$&$-$&Yes&2\\
54&$({\ast}2_12_12{:}2)$&No&${\times}{\times}$&$-$&Yes&4\\
54&$({\ast}2{:}2{:}2{:}2)$&No&$2222$&$-$&Yes&4\\
55&$[2_02_0{\times}_0]$&Yes&$22{\times}$&$-$&Yes&1\\
55&$({\ast}2{\cdot}2{:}2{\cdot}2)$&No&${\ast}{\ast}$&$-$&No&4\\
56&$(2_12{\bar\ast}_0)$&No&${\times}{\times}$&$-$&Yes&4\\
56&$(2{\bar\ast}{:}2{:}2)$&No&$2222$&$-$&No&4\\
57&$(2_02{\bar\ast}{\cdot})$&Yes&$22{\ast}$&$-$&Yes&2\\
57&$[2_12_1{\ast}{:}]$&Yes&${\times}{\times}$&$-$&Yes&2\\
57&$({\ast}2{:}2{\cdot}2{:}2)$&No&${\ast}{\ast}$&$-$&Yes&4\\
58&$[2_02_0{\times}_1]$&Yes&$2222$&$-$&No&2\\
58&$(2_1{\ast}2{\cdot}2)$&No&${\ast}{\ast}$&$-$&No&4\\
59&$[2_12_1{\ast}{\cdot}]$&Yes&${\ast}{\ast}$&$-$&Yes&2\\
59&$(2{\bar\ast}{\cdot}2{\cdot}2)$&Yes&${\ast}2222$&$-$&Yes&2\\
60&$(2_12{\bar\ast}_1)$&No&${\times}{\times}$&$-$&No&4\\
60&$(2_02{\bar\ast}{:})$&No&$2222$&$-$&No&4\\
60&$(2_1{\ast}2{:}2)$&No&${\times}{\times}$&$-$&Yes&4\\
61&$(2_12{\bar\ast}{:})$&No&${\times}{\times}$&$-$&No&4\\
62&$[2_12_1{\times}]$&Yes&${\times}{\times}$&$-$&Yes&2\\
62&$(2_12{\bar\ast}{\cdot})$&No&${\ast}{\ast}$&$-$&No&4\\
62&$(2{\bar\ast}{\cdot}2{:}2)$&No&${\ast}{\ast}$&$-$&No&4\\
63&$[2_02_1{\ast}{\cdot}]$&Yes&$22{\ast}$&$-$&Yes&2\\
63&$[2_1{\ast}{\cdot}2{:}2]$&Yes&${\ast}{\times}$&$-$&Yes&2\\
63&$({\ast}2{\cdot}2{\cdot}2{:}2)$&No&${\ast}2222$&$-$&Yes&4\\
64&$[2_02_1{\ast}{:}]$&Yes&$22{\times}$&$-$&Yes&2\\
64&$({\ast}2_12{\cdot}2{:}2)$&No&${\ast}{\times}$&$-$&Yes&4\\
64&$({\ast}2{\cdot}2{:}2{:}2)$&No&$22{\ast}$&$-$&Yes&4\\
65&$[2_0{\ast}{\cdot}2{\cdot}2]$&Yes&$2{\ast}22$&$-$&Yes&1\\
65&$[{\ast}{\cdot}2{\cdot}2{\cdot}2{:}2]$&Yes&${\ast}2222$&$-$&Yes&2\\
66&$[2_0{\ast}{:}2{:}2]$&Yes&$2222$&$-$&Yes&2\\
66&$({\ast}2_02_12{\cdot}2)$&No&$22{\ast}$&$-$&Yes&4\\
67&$({\ast}2_02{\cdot}2{\cdot}2)$&Yes&$2{\ast}22$&$-$&Yes&2\\
67&$[{\ast}{\cdot}2{:}2{:}2{:}2]$&Yes&$22{\ast}$&$-$&Yes&2\\
68&$({\ast}2_02_12{:}2)$&No&$22{\times}$&$-$&Yes&4\\
68&$({\ast}2_02{:}2{:}2)$&No&$2222$&$-$&Yes&4\\
69&$[{\ast}{\cdot}2{\cdot}2{:}2{:}2]$&Yes&$2{\ast}22$&$-$&Yes&2\\
\fibtablesplit
70&$(2{\bar\ast}2_02_1)$&No&$2222$&$-$&Yes&8\\
71&$[2_1{\ast}{\cdot}2{\cdot}2]$&Yes&${\ast}2222$&$-$&Yes&2\\
72&$[2_1{\ast}{:}2{:}2]$&Yes&$22{\times}$&$-$&Yes&2\\
72&$({\ast}2_02{\cdot}2{:}2)$&No&$22{\ast}$&$-$&Yes&4\\
73&$({\ast}2_12{:}2{:}2)$&No&$22{\times}$&$-$&Yes&4\\
74&$[2_0{\ast}{\cdot}2{:}2]$&Yes&$22{\ast}$&$-$&Yes&2\\
74&$({\ast}2_12{\cdot}2{\cdot}2)$&No&${\ast}2222$&$-$&Yes&4\\
75&$(4_04_02_0)$&Yes&$442$&$\cdot$&Yes&1\\
76&$(4_14_12_1)$&No&${\circ}$&$\cdot$&Yes&4\\
77&$(4_24_22_0)$&No&$2222$&$\cdot$&Yes&2\\
79&$(4_24_02_1)$&No&$442$&$\cdot$&Yes&2\\
80&$(4_34_12_0)$&No&$2222$&$\cdot$&Yes&4\\
81&$(442_0)$&Yes&$2222$&$-$&No&2\\
82&$(442_1)$&No&$2222$&$-$&No&4\\
83&$[4_04_02_0]$&Yes&$442$&$-$&Yes&1\\
84&$[4_24_22_0]$&Yes&$2222$&$-$&No&2\\
85&$(44_02)$&Yes&$442$&$-$&Yes&2\\
86&$(44_22)$&No&$2222$&$-$&No&4\\
87&$[4_24_02_1]$&Yes&$442$&$-$&Yes&2\\
88&$(44_12)$&No&$2222$&$-$&No&8\\
89&$({\ast}4_04_02_0)$&Yes&$442$&$-$&Yes&2\\
90&$(4_0{\ast}2_0)$&Yes&$442$&$-$&Yes&2\\
91&$({\ast}4_14_12_1)$&No&${\circ}$&$-$&Yes&8\\
92&$(4_1{\ast}2_1)$&No&${\circ}$&$-$&No&8\\
93&$({\ast}4_24_22_0)$&No&$2222$&$-$&Yes&4\\
94&$(4_2{\ast}2_0)$&No&$2222$&$-$&No&4\\
97&$({\ast}4_24_02_1)$&No&$442$&$-$&Yes&4\\
98&$({\ast}4_34_12_0)$&No&$2222$&$-$&Yes&8\\
99&$({\ast}{\cdot}4{\cdot}4{\cdot}2)$&Yes&${\ast}442$&$\cdot$&Yes&1\\
100&$(4_0{\ast}{\cdot}2)$&Yes&$4{\ast}2$&$\cdot$&Yes&1\\
101&$({\ast}{:}4{\cdot}4{:}2)$&No&$2{\ast}22$&$\cdot$&Yes&2\\
102&$(4_2{\ast}{\cdot}2)$&No&$2{\ast}22$&$\cdot$&Yes&2\\
103&$({\ast}{:}4{:}4{:}2)$&No&$442$&$\cdot$&Yes&2\\
104&$(4_0{\ast}{:}2)$&No&$442$&$\cdot$&Yes&2\\
105&$({\ast}{\cdot}4{:}4{\cdot}2)$&No&${\ast}2222$&$\cdot$&Yes&2\\
106&$(4_2{\ast}{:}2)$&No&$22{\times}$&$\cdot$&Yes&2\\
107&$({\ast}{\cdot}4{\cdot}4{:}2)$&No&${\ast}442$&$\cdot$&Yes&2\\
108&$({\ast}{\cdot}4{:}4{:}2)$&No&$4{\ast}2$&$\cdot$&Yes&2\\
109&$(4_1{\ast}{\cdot}2)$&No&${\ast}2222$&$\cdot$&Yes&4\\
110&$(4_1{\ast}{:}2)$&No&$22{\times}$&$\cdot$&Yes&4\\
111&$({\ast}4{\cdot}42_0)$&Yes&$2{\ast}22$&$-$&Yes&2\\
112&$({\ast}4{:}42_0)$&No&$2222$&$-$&No&4\\
113&$(4{\bar\ast}{\cdot}2)$&Yes&$2{\ast}22$&$-$&No&2\\
114&$(4{\bar\ast}{:}2)$&No&$2222$&$-$&No&4\\
115&$({\ast}{\cdot}44{\cdot}2)$&Yes&${\ast}2222$&$-$&Yes&2\\
116&$({\ast}{:}44{:}2)$&No&$2222$&$-$&No&4\\
117&$(4{\bar\ast}_02_0)$&Yes&$22{\times}$&$-$&Yes&2\\
118&$(4{\bar\ast}_12_0)$&No&$2222$&$-$&No&4\\
119&$({\ast}4{\cdot}42_1)$&No&${\ast}2222$&$-$&Yes&4\\
120&$({\ast}4{:}42_1)$&No&$22{\times}$&$-$&Yes&4\\
121&$({\ast}{\cdot}44{:}2)$&No&$2{\ast}22$&$-$&No&4\\
122&$(4{\bar\ast}2_1)$&No&$2222$&$-$&No&8\\
\fibtableend\end{multicols}
\medskip\centerline{Table 1. (cont.) Three-dimensional crystallographic Seifert and co-Seifert fibrations.}\goodbreak
\begin{multicols}{2}\fibtablestart
123&$[{\ast}{\cdot}4{\cdot}4{\cdot}2]$&Yes&${\ast}442$&$-$&Yes&1\\
124&$[{\ast}{:}4{:}4{:}2]$&Yes&$442$&$-$&Yes&2\\
125&$({\ast}4_04{\cdot}2)$&Yes&$4{\ast}2$&$-$&Yes&2\\
126&$({\ast}4_04{:}2)$&No&$442$&$-$&Yes&4\\
127&$[4_0{\ast}{\cdot}2]$&Yes&$4{\ast}2$&$-$&Yes&1\\
128&$[4_0{\ast}{:}2]$&Yes&$442$&$-$&Yes&2\\
129&$({\ast}4{\cdot}4{\cdot}2)$&Yes&${\ast}442$&$-$&Yes&2\\
130&$({\ast}4{:}4{:}2)$&No&$442$&$-$&Yes&4\\
131&$[{\ast}{\cdot}4{:}4{\cdot}2]$&Yes&${\ast}2222$&$-$&Yes&2\\
132&$[{\ast}{:}4{\cdot}4{:}2]$&Yes&$2{\ast}22$&$-$&Yes&2\\
133&$({\ast}4_24{:}2)$&No&$22{\times}$&$-$&Yes&4\\
134&$({\ast}4_24{\cdot}2)$&No&$2{\ast}22$&$-$&Yes&4\\
135&$[4_2{\ast}{:}2]$&Yes&$22{\times}$&$-$&Yes&2\\
136&$[4_2{\ast}{\cdot}2]$&Yes&$2{\ast}22$&$-$&No&2\\
137&$({\ast}4{\cdot}4{:}2)$&No&${\ast}2222$&$-$&Yes&4\\
138&$({\ast}4{:}4{\cdot}2)$&No&$2{\ast}22$&$-$&No&4\\
139&$[{\ast}{\cdot}4{\cdot}4{:}2]$&Yes&${\ast}442$&$-$&Yes&2\\
140&$[{\ast}{\cdot}4{:}4{:}2]$&Yes&$4{\ast}2$&$-$&Yes&2\\
141&$({\ast}4_14{\cdot}2)$&No&${\ast}2222$&$-$&Yes&8\\
142&$({\ast}4_14{:}2)$&No&$22{\times}$&$-$&Yes&8\\
143&$(3_03_03_0)$&Yes&$333$&$\cdot$&Yes&1\\
144&$(3_13_13_1)$&No&${\circ}$&$\cdot$&Yes&3\\
146&$(3_03_13_2)$&No&$333$&$\cdot$&Yes&3\\
147&$(63_02)$&Yes&$333$&$-$&Yes&2\\
148&$(63_12)$&No&$333$&$-$&Yes&6\\
149&$({\ast}3_03_03_0)$&Yes&$333$&$-$&Yes&2\\
150&$(3_0{\ast}3_0)$&Yes&$333$&$-$&Yes&2\\
151&$({\ast}3_13_13_1)$&No&${\circ}$&$-$&Yes&6\\
152&$(3_1{\ast}3_1)$&No&${\circ}$&$-$&Yes&6\\
155&$({\ast}3_03_13_2)$&No&$333$&$-$&Yes&6\\
156&$({\ast}{\cdot}3{\cdot}3{\cdot}3)$&Yes&${\ast}333$&$\cdot$&Yes&1\\
157&$(3_0{\ast}{\cdot}3)$&Yes&$3{\ast}3$&$\cdot$&Yes&1\\
158&$({\ast}{:}3{:}3{:}3)$&No&$333$&$\cdot$&Yes&2\\
\fibtablesplit
159&$(3_0{\ast}{:}3)$&No&$333$&$\cdot$&Yes&2\\
160&$(3_1{\ast}{\cdot}3)$&No&${\ast}333$&$\cdot$&Yes&3\\
161&$(3_1{\ast}{:}3)$&No&$333$&$\cdot$&Yes&6\\
162&$({\ast}{\cdot}63_02)$&Yes&$3{\ast}3$&$-$&Yes&2\\
163&$({\ast}{:}63_02)$&No&$333$&$-$&Yes&4\\
164&$({\ast}6{\cdot}3{\cdot}2)$&Yes&${\ast}333$&$-$&Yes&2\\
165&$({\ast}6{:}3{:}2)$&No&$333$&$-$&Yes&4\\
166&$({\ast}{\cdot}63_12)$&No&${\ast}333$&$-$&Yes&6\\
167&$({\ast}{:}63_12)$&No&$333$&$-$&Yes&12\\
168&$(6_03_02_0)$&Yes&$632$&$\cdot$&Yes&1\\
169&$(6_13_12_1)$&No&${\circ}$&$\cdot$&Yes&6\\
171&$(6_23_22_0)$&No&$2222$&$\cdot$&Yes&3\\
173&$(6_33_02_1)$&No&$333$&$\cdot$&Yes&2\\
174&$[3_03_03_0]$&Yes&$333$&$-$&Yes&1\\
175&$[6_03_02_0]$&Yes&$632$&$-$&Yes&1\\
176&$[6_33_02_1]$&Yes&$333$&$-$&Yes&2\\
177&$({\ast}6_03_02_0)$&Yes&$632$&$-$&Yes&2\\
178&$({\ast}6_13_12_1)$&No&${\circ}$&$-$&Yes&12\\
180&$({\ast}6_23_22_0)$&No&$2222$&$-$&Yes&6\\
182&$({\ast}6_33_02_1)$&No&$333$&$-$&Yes&4\\
183&$({\ast}{\cdot}6{\cdot}3{\cdot}2)$&Yes&${\ast}632$&$\cdot$&Yes&1\\
184&$({\ast}{:}6{:}3{:}2)$&No&$632$&$\cdot$&Yes&2\\
185&$({\ast}{\cdot}6{:}3{:}2)$&No&$3{\ast}3$&$\cdot$&Yes&2\\
186&$({\ast}{:}6{\cdot}3{\cdot}2)$&No&${\ast}333$&$\cdot$&Yes&2\\
187&$[{\ast}{\cdot}3{\cdot}3{\cdot}3]$&Yes&${\ast}333$&$-$&Yes&1\\
188&$[{\ast}{:}3{:}3{:}3]$&Yes&$333$&$-$&Yes&2\\
189&$[3_0{\ast}{\cdot}3]$&Yes&$3{\ast}3$&$-$&Yes&1\\
190&$[3_0{\ast}{:}3]$&Yes&$333$&$-$&Yes&2\\
191&$[{\ast}{\cdot}6{\cdot}3{\cdot}2]$&Yes&${\ast}632$&$-$&Yes&1\\
192&$[{\ast}{:}6{:}3{:}2]$&Yes&$632$&$-$&Yes&2\\
193&$[{\ast}{\cdot}6{:}3{:}2]$&Yes&$3{\ast}3$&$-$&Yes&2\\
194&$[{\ast}{:}6{\cdot}3{\cdot}2]$&Yes&${\ast}333$&$-$&Yes&2\\
\fibtableend\vfill
\end{multicols}
\medskip\centerline{Table 1. (cont.) Three-dimensional crystallographic Seifert and co-Seifert fibrations.}
\endgroup

\end{document}